\newcommand{\N}{\mathbb{N}}
\newcommand{\R}{\mathbb{R}}
\newcommand{\sH}{\mathcal{H}}
\newtheorem{theorem}{Theorem}
\numberwithin{theorem}{section}
\newtheorem{lemma}[theorem]{Lemma}
\numberwithin{fact}{subsection}
\newtheorem*{theorem*}{Theorem}
\newtheorem*{lemma*}{Lemma}
\newtheorem*{corollary*}{Corollary}
\theoremstyle{definition}
\newtheorem{remark}[theorem]{Remark}
\newtheorem{example}[theorem]{Example}
\newtheorem*{remark*}{Remark}
\DeclareMathOperator{\id}{Id}
\title{Fast second-order dynamics with slow vanishing damping approaching the zeros of a monotone and continuous operator}
\author{
    Radu Ioan Bo\c{t}\footnote{Faculty of Mathematics, University of Vienna, Oskar-Morgenstern-Platz 1, 1090 Vienna, Austria, e-mail: \url{radu.bot@univie.ac.at}. Research partially supported by FWF (Austrian Science Fund), project W 1260, and by a grant of the Romanian Ministry of Research, Innovation and Digitization, CNCS -
UEFISCDI, project number PN-III-P1-1.1-TE-2021-0138, within PNCDI III.} 
    \and
    David Alexander Hulett\footnote{Faculty of Mathematics, University of Vienna, Oskar-Morgenstern-Platz 1, 1090 Vienna, Austria, e-mail: \url{david.alexander.hulett@univie.ac.at}. Research partially supported by the VGSCO (Vienna Graduate School on Computational Optimization), project W 1260.}
    \and
    Dang-Khoa Nguyen\footnote{Faculty of Mathematics and Computer Science, University of Science, Ho Chi Minh City, Vietnam, e-mail: \url{ndkhoa@hcmus.edu.vn}}
    \footnote{Vietnam National University, Ho Chi Minh City, Vietnam}
    }
\date{}
\begin{document}
 

\maketitle

\begin{abstract}
    In this work, we approach the problem of finding the zeros of a continuous and monotone operator through a second-order dynamical system with a damping term of the form $1/t^{r}$, where $r\in [0, 1]$. The system features the time derivative of the operator evaluated along the trajectory, which is a Hessian-driven type damping term when the governing operator comes from a potential. Also entering the system is a time rescaling parameter $\beta(t)$ which satisfies a certain growth condition. We derive $o\left(\frac{1}{t^{2r}\beta(t)}\right)$ convergence rates for the norm of the operator evaluated along the generated trajectories as well as for a gap function which serves as a measure of optimality for the associated variational inequality. The parameter $r$ enters the growth condition for $\beta(t)$: when $r < 1$, the damping $1/t^{r}$ approaches zero at a slower speed than Nesterov's $1/t$ damping; in this case, we are allowed to choose $\beta(t)$ to be an exponential function, thus having linear convergence rates for the involved quantities. We also show weak convergence of the trajectories towards zeros of the governing operator. Through a particular choice for the operator, we establish a connection with the problem of minimizing a smooth and convex function with linear constraints. The convergence rates we derived in the operator case are inherited by the objective function evaluated at the trajectories and for the feasibility gap. We also prove weak convergence of the trajectories towards primal-dual solutions of the problem. 
    
    A discretization of the dynamical system yields an implicit algorithm that exhibits analogous convergence properties to its continuous counterpart. 

    We complement our theoretical findings with two numerical experiments.
\end{abstract}
\noindent \textbf{Key Words.} monotone equation, variational inequality, slow asymptotically vanishing damping, sublinear convergence rates, linear convergence rates, convergence of trajectories

\noindent \textbf{AMS subject classification.} 47H05, 47J20, 65K10, 65K15

\tableofcontents

\section{Introduction}
\subsection{Problem statement and motivation}
In the setting of a real Hilbert space $\mathcal{H}$ and a monotone and continuous operator $V :\mathcal{H} \to \mathcal{H}$, we study the following problem:
\begin{equation}\label{eq:monotone equation}
    \text{find }z_{*}\in \mathcal{H}\text{ such that } V(z_{*}) = 0. 
\end{equation}
It is simple to see that the continuity and monotonicity of $V$ ensure that $z_{*}$ satisfies \eqref{eq:monotone equation} if and only if 
\begin{equation}\label{eq:variational inequality}
    \langle z - z_{*}, V(z)\rangle \geq 0 \quad \forall z\in \mathcal{H}. 
\end{equation}
One of the principal motivations to study the monotone inclusion \eqref{eq:monotone equation} comes from minimax problems. Indeed, consider
\begin{equation}\label{eq:minmax}
    \min_{x\in \mathcal{X}}\max_{y\in \mathcal{Y}} \Phi(x, y), 
\end{equation}
where $\mathcal{X}, \mathcal{Y}$ are real Hilbert spaces and $\Phi \colon \mathcal{X} \times \mathcal{Y} \to \R$ is continuously differentiable, convex in the first variable and concave in the second one. Solutions to \eqref{eq:minmax} are saddle points of $\Phi$: that is, a pair $(x_{*}, y_{*})\in \mathcal{X} \times \mathcal{Y}$ such that 
\[
    \Phi(x_{*}, y) \leq \Phi(x_{*}, y_{*}) \leq \Phi(x, y_{*}) \quad \forall (x, y) \in \mathcal{X} \times \mathcal{Y}. 
\]
This is equivalent to 
\[
    \begin{dcases}
        \nabla_{x} \Phi(x_{*}, y_{*}) &= 0, \\
        -\nabla_{y} \Phi(x_{*}, y_{*}) &= 0,
    \end{dcases}
\]
and this is nothing else than a monotone inclusion problem involving the monotone and continuous operator $V : \mathcal{X} \times \mathcal{Y} \to \mathcal{X} \times \mathcal{Y}$ given by 
\[
    V(x, y) := \Bigl( \nabla_{x} \Phi(x, y), -\nabla_{y} \Phi(x, y)\Bigr).
\]
Formulations \eqref{eq:monotone equation} and \eqref{eq:minmax} underlie numerous problems in different fields, such as optimization, economics, game theory, and partial differential equations, and are of particular interest to the machine learning community: for example, they play a fundamental role in areas such as multi-agent reinforcement learning \cite{ReinforcementLearning}, robust adversarial learning \cite{RobustLearning} and generative adversarial networks (GANs) \cite{GANs, GANs2}. An interesting example of \eqref{eq:minmax}, upon which we will elaborate further later, is linearly constrained convex minimization. Precisely, let us consider 
\begin{equation}\label{eq:constrained minimization}
    \begin{array}{rl}
		\min & f \left( x \right), \\
		\text{subject to} 	& Ax = b, 
	\end{array}
\end{equation}
where $f : \mathcal{X} \to \R$ is convex and continuously differentiable, $b\in \mathcal{Y}$ and $A : \mathcal{X} \to \mathcal{Y}$ is a bounded linear operator. Associated to \eqref{eq:constrained minimization} is the Lagrangian $\mathcal{L} : \mathcal{X} \times \mathcal{Y} \to \R$ given by
\[
    \mathcal{L}(x, y) := f(x) + \langle \lambda, Ax - b\rangle.
\]
Primal-dual solutions to \eqref{eq:constrained minimization}, that is, pairs $(x_{*}, \lambda_{*}) \in\mathcal{X} \times \mathcal{Y}$ which satisfy 
\[
    \begin{dcases}
        \nabla f(x_{*}) + A^{*}\lambda_{*} &= 0, \\
        b - Ax_{*} &= 0 
    \end{dcases}
\]
are precisely the saddle points of $\mathcal{L}$, i.e., the zeros of the monotone and continuous operator
\begin{equation}\label{eq:operator for constrained minimization}
    V(x, \lambda) := \Bigl( \nabla_{x} \mathcal{L}(x, y), -\nabla_{\lambda} \mathcal{L}(x, \lambda)\Bigr) = \Bigl( \nabla f(x) + A^{*}\lambda, b - Ax\Bigr).
\end{equation}
Attached to \eqref{eq:monotone equation}, we will investigate the asymptotic properties of the trajectories generated by a certain second-order dynamical system. The system features a vanishing damping of the form $\frac{\alpha}{t^{r}}$, where $r\in [0, 1]$, which approaches zero more slowly than Nesterov's classical $\frac{\alpha}{t}$ when $r < 1$. This is combined with the time derivative of the operator evaluated along the trajectory, which is also known as a Hessian-driven damping term when $V$ is the gradient of a continuously differentiable function. The convergence behaviour of the solutions is greatly affected by the presence of a time rescaling parameter $\beta(t)$ which is positive, nondecreasing, and continuously differentiable, and needs to satisfy a certain growth condition. For the generated trajectory $z(t)$, we will derive rates of convergence of $o\left(\frac{1}{t^{2r}\beta(t)}\right)$ for $\left\lVert V(z(t)) \right\rVert$ and for the restricted gap function associated to \eqref{eq:variational inequality} as $t\to +\infty$. When $r < 1$, we are allowed to choose $\beta(t)$ such that $\beta(t)$ grows exponentially with time; such a choice is not possible when $r = 1$, i.e., when we take Nesterov's damping. We will also show weak convergence of $z(t)$ towards a zero of $V$ as $t\to +\infty$. 

When $V$ is chosen to be \eqref{eq:operator for constrained minimization}, we recover a primal-dual system formulated in the spirit of He et al. \cite{HeHuFang}. However, it differs in the sense that our primal-dual system includes a Hessian-driven damping term. For the generated primal-dual trajectory $(x(t), \lambda(t))$, we will show convergence rates of $o\left(\frac{1}{t^{2r}\beta(t)}\right)$ for the functional values along the primal trajectory, the primal-dual gap formulated in terms of the Lagrangian $\mathcal{L}$, and for the feasibility gap which gauges how far the generated primal trajectory is from the constraints. Additionally, we will furnish the weak convergence of $(x(t), \lambda(t))$ towards a primal-dual solution of the minimization problem as $t\to +\infty$. 

A temporal discretization of our system will produce an implicit algorithm that mimics the convergence properties of the continuous time system. If $(\beta_{k})_{k\geq 1}$ is a positive, nondecreasing sequence which satisfies an analogous growth condition to the continuous one, and $(z^{k})_{k\geq 1}$ are the iterates generated by the algorithm, we will produce convergence rates of $o\left(\frac{1}{k^{2r} \beta_{k}}\right)$ for $\left\lVert V(z^{k}) \right\rVert$ and for the discrete gap function associated to \eqref{eq:variational inequality} as $k\to +\infty$. Similar to the continuous case, $r < 1$ allows us to choose $\beta_{k}$ such that $\beta_{k}$ grows exponentially as $k\to +\infty$. Furthermore, we will show that the iterates converge weakly to a zero of $V$. 

\subsection{Previous and related continuous time systems}
In the last years, there have been many advances in the study of continuous time systems attached to monotone inclusion problems. We briefly visit them in the following paragraphs. 

Extending the Heavy ball with friction dynamics introduced by Álvarez in \cite{Alvarez} for unconstrained minimization, Álvarez and Attouch \cite{AlvarezAttouch} and Attouch and Maingé \cite{AttouchMainge} studied the dynamics 
\begin{equation}\label{eq:HBF}
    \Ddot{z}(t) + \mu \Dot{z}(t) + A(z(t)) = 0, 
\end{equation}
where $A \colon \mathcal{H} \to \mathcal{H}$ is a $\lambda$-cocoercive operator and $\mu > 0$. The authors showed that under the assumption $\lambda \mu^{2} > 1$, a solution to \eqref{eq:HBF} weakly converges to a zero of $A$. Recall that for a maximally monotone (but not necessarily single-valued) operator $A \colon \mathcal{H} \to 2^{\mathcal{H}}$, its Yosida approximation of index $\lambda > 0$ is given by 
\[
    A_{\lambda} := \frac{1}{\lambda} \bigl( \id - J_{\lambda A}\bigr), 
\]
where $J_{\lambda A} := \bigl(\id + \lambda A\bigr)^{-1}$ is the resolvent of $\lambda A$. The operator $A_{\lambda}$ is single-valued, $\lambda$-cocoercive, and shares the same set of zeros of $A$. Therefore, if $\lambda \mu^{2} > 1$, then any solution to 
\[
    \Ddot{z}(t) + \mu \Dot{z}(t) + A_{\lambda}(z(t)) = 0
\]
weakly converges to a zero of $A$. 

Related to \eqref{eq:HBF}, Bo\c{t} and Csetnek \cite{BotCsetnek} studied the system 
\begin{equation}
    \Ddot{z}(t) + \gamma(t) \Dot{z}(t) + \nu(t) A(z(t)) = 0, 
\end{equation}
where $A \colon  \mathcal{H} \to \mathcal{H}$ is again $\lambda$-cocoercive. Under the assumption that $\gamma$ and $\nu$ are locally absolutely continuous, $\Dot{\gamma}(t) \leq 0\leq \Dot{\nu}(t)$ for almost every $t\in [0, +\infty)$ and $\inf_{t\geq 0} \frac{\gamma^{2}(t)}{\nu(t)} > \frac{1}{\lambda}$, the solutions to this system converge weakly to zeros of $A$. 

Linked with the Newton and Levenberg-Marquardt methods, in \cite{AttouchSvaiter} Attouch and Svaiter studied the following first-order differential inclusion:
\begin{equation}
    \begin{dcases}
        v(t) \in A(z(t)), \\
        \Dot{z}(t) + \beta(t) \Dot{v}(t) + \beta(t) v(t) = 0,
    \end{dcases}
\end{equation}
where $A : \mathcal{H} \to 2^{\mathcal{H}}$ is maximally monotone and $\beta$ is positive and locally absolutely continuous. Under the assumption $\sup_{t\geq 0} \frac{\Dot{\beta}(t)}{\beta(t)} \leq 1$, the authors prove that the solutions to this system converge weakly to zeros of $A$. 

Ever since the landmark work by Su et al. in \cite{SuBoydCandes}, where the authors provided a continuous time counterpart of Nesterov's accelerated gradient algorithm, including a damping term $\frac{\alpha}{t}$ attached to the velocity has been widely successful in having an accelerating behaviour for convex minimization problems in the unconstrained (see, for example, Attouch et al. \cite{AttouchPeypouquetRedont}) and linearly constrained settings (see He et al. \cite{HeHuFang, HeHuFang2}, Zeng et al. \cite{ZengLeiChen}, Attouch et al. \cite{ADMM}, Bo\c t and Nguyen \cite{BotNguyen}). Interestingly, the effects of this vanishing damping go beyond the optimization setting: for $A :\mathcal{H} \to 2^{\mathcal{H}}$ maximally monotone and $t\geq t_{0} > 0$, in \cite{AttouchPeypouquet} Attouch and Peypouquet addressed the Nesterov-like system 
\begin{equation*}
    \Ddot{z}(t) + \frac{\alpha}{t}\Dot{z}(t) + A_{\lambda(t)}(z(t)) = 0, 
\end{equation*}
where $\alpha > 1$ and $\lambda(t) = \mathcal{O}(t^{2})$ as $t\to +\infty$. Solutions to this system converge weakly to zeros of $A$. Furthermore, the velocity $\Dot{z}(t)$ and the acceleration $\Ddot{z}(t)$ vanish with rates $\mathcal{O}\left(\frac{1}{t}\right)$ and $\mathcal{O}\left(\frac{1}{t^{2}}\right)$ as $t\to +\infty$ respectively. Preserving these convergence properties, Attouch and László \cite{AttouchLaszlo} introduced a Hessian-driven damping term to the previous system: 
\begin{equation}\label{eq:AVD operators}
    \Ddot{z}(t) + \frac{\alpha}{t}\Dot{z}(t) + \xi \frac{d}{dt}\left( A_{\lambda(t)}(z(t))\right) + A_{\lambda(t)}(z(t)) = 0, 
\end{equation}
where $\xi \geq 0$. The introduction of this term has two effects. First, if $t\mapsto z(t)$ is a solution to \eqref{eq:AVD operators}, it allows to derive the rates 
\[
    \left\lVert \Dot{z}(t) \right\rVert = o\left(\frac{1}{t}\right), \quad \left\lVert A_{\lambda(t)}(z(t)) \right\rVert = o\left( \frac{1}{t^{2}}\right) \quad \text{and} \quad \left\lVert \frac{d}{dt} \left(A_{\lambda(t)}(z(t))\right) \right\rVert = \mathcal{O}\left(\frac{1}{t^{3}}\right)
\]
as $t\to +\infty$. Second, in practice, when $\xi = 0$ (meaning no Hessian-driven damping) the trajectories present an oscillatory behaviour that is attenuated as soon as $\xi > 0$ is chosen. 

For a continuous and monotone operator $V : \mathcal{H} \to \mathcal{H}$ and $t \geq t_{0} > 0$, Bo\c t et al. \cite{fOGDA} considered the second order dynamics 
\begin{equation}\label{eq:fOGDA}
    \Ddot{z}(t) + \frac{\alpha}{t}\Dot{z}(t) + \beta(t) \frac{d}{dt}V(z(t)) + \frac{1}{2}\left(\Dot{\beta}(t) + \frac{\alpha}{t}\beta(t)\right) = 0, 
\end{equation}
where $\alpha > 0$ and $\beta$ is positive, nondecreasing and continuously differentiable. Assuming $\alpha > 2$ and the growth condition
\begin{equation}\label{eq:growth condition intro}
    \sup_{t\geq t_{0}} \frac{t \Dot{\beta}(t)}{\beta(t)} < \alpha - 2, 
\end{equation}
for $z_{*}$ a zero of $V$ and $t\mapsto z(t)$ a solution to the authors produce the convergence rates
\[
    \bigl\| \Dot{z}(t)\bigr\| = o \left( \frac{1}{t}\right), \quad \| V(z(t))\| = o\left( \frac{1}{t\beta(t)}\right) \quad \text{and} \quad \langle z(t) - z_{*}, V(z(t))\rangle = o\left(\frac{1}{t\beta(t)}\right)
\]
as $t\to +\infty$. Furthermore, the trajectories converge weakly to zeros of $V$. 

\subsection{Our work}\label{subsec:our work}
In the context of linearly constrained convex minimization, for a primal-dual system like that in Zeng et al. \cite{ZengLeiChen}, Attouch et al. \cite{ADMM} considered a vanishing damping term of the form $\frac{\alpha}{t^{r}} + \frac{r}{t}$, while He et al. \cite{HeHuFang} measured the effects of a term $\frac{\alpha}{t^{r}}$. In both cases, the choice $r < 1$ allowed the authors to derive exponential rates of convergence for the primal-dual gap. We wanted to see if this influence carries over to a more general setting of a continuous and monotone operator $V \colon \mathcal{H} \to \mathcal{H}$. In the following, we state the system and the hypotheses on its parameters, which will be assumed throughout Section 2. 

\begin{tcolorbox}
    \textbf{Continuous time scheme}: for $t \geq t_{0} > 0$, we will study the asymptotic properties of the solutions to
    \begin{equation}\label{eq:system r=s in (0, 1)}
        \Ddot{z}(t) + \frac{\alpha}{t^{r}} \Dot{z}(t) + \theta t^{r} \beta(t) \frac{d}{dt}V(z(t)) + \beta(t) V(z(t)) = 0. 
    \end{equation}
    Here, $r\in [0, 1]$, and $\alpha, \theta > 0$ are taken as follows: 
    \begin{itemize}
        \item If $r\in [0, 1)$, then $\frac{2}{\alpha} < \theta$; 
        \item If $r = 1$, then $\frac{2}{\alpha + 1} \leq \theta < \frac{1}{2}$. 
    \end{itemize}
    $\beta: [t_{0}, +\infty)\to (0, +\infty)$ is a continuously differentiable and nondecreasing function which satisfies the following growth condition: 
    \begin{equation}\label{eq:growth condition case r=s}
        \sup_{t \geq t_{0}} t^{r} \left( \frac{\Dot{\beta}(t)}{\beta(t)} + \dfrac{2r}{t} \right) < \frac{1}{\theta} . 
    \end{equation}
\end{tcolorbox}
For $z_{*}$ an arbitrary zero of $V$ and $t\mapsto z(t)$ a solution to \eqref{eq:system r=s in (0, 1)}, we will show that 
\[
     \| V(z(t))\| = o\left(\frac{1}{t^{\rho + r} \beta(t)}\right), \quad \langle z(t) - z_{*}, V(z(t)) \rangle = o\left(\frac{1}{t^{2\rho}\beta(t)}\right) \quad \text{and} \quad \bigl\| \Dot{z}(t)\bigr\| = o\left(\frac{1}{t^{\rho}}\right)
\]
as $t\to +\infty$, where $\rho = 0$ or $\rho = 1$ if $r = 0$ or $r = 1$ respectively, and $\rho \in (0, r)$ when $r\in (0, 1)$. This allows us to get as close to the state-of-the-art rates as possible: if we assume that $z(t)$ is bounded when $r\in (0, 1)$, we are able to plug $\rho = r$. The weak convergence of $z(t)$ towards a zero of $V$ can be guaranteed in the cases $r = 0$ and $r = 1$, and in the case $r\in (0, 1)$ provided $z(t)$ is assumed bounded.
\begin{remark}
    When $r < 1$, the growth condition \eqref{eq:growth condition case r=s} holds if for some $0 < \delta < \frac{1}{\theta}$ the following differential equation is fulfilled by $\beta$:
    \[
        t^{r} \left( \frac{\Dot{\beta}(t)}{\beta(t)} + \frac{2r}{t}\right) = \frac{1}{\theta} - \delta \quad \forall t\geq t_{0}. 
    \]
    A solution to this equation is 
    \[
        \beta(t) = \frac{1}{t^{2r}} \cdot e^{\left(\frac{1}{\theta} - \delta\right) \frac{t^{1 - r}}{1 - r}},
    \]
    thus convergence rates of $o\left(e^{-\left(\frac{1}{\theta} - \delta\right) \frac{t^{1 - r}}{1 - r}}\right)$ are attained for $\| V(z(t))\|$ and $\langle z(t) - z_{*}, V(z(t))\rangle$ as $t\to +\infty$. To our knowledge, this is the first time such fast rates are achieved in the general context of equations governed by monotone and continuous operators.     
\end{remark}

A discretization of \eqref{eq:system r=s in (0, 1)} yields an implicit algorithm, which has analogous convergence properties to its continuous counterpart. We will only consider $r\in (0, 1]$.
\begin{tcolorbox}
    \textbf{Discrete time scheme}: let $z^{0}, z^{1}$ be initial points in $\mathcal{H}$. We iteratively define for all $k \geq 1$
    \begin{equation}\label{eq:algorithm intro}
        \begin{split}
            z^{k + 1} := & \: z^{k} + \frac{k^{r}}{\alpha - rk^{r - 1} + (k + 1)^{r}} \bigl( z^{k} - z^{k - 1}\bigr) - \frac{\theta k^{2r} \beta_{k - 1}}{\alpha - rk^{r - 1} + (k + 1)^{r}} \Bigl[ V(z^{k + 1}) - V(z^{k})\Bigr] \\
            &- \frac{\theta \Bigl\{ \Bigl[ (k + 1)^{2r} - k^{2r}\Bigr] - 2rk^{2r - 1}\Bigr\} + k^{r}}{\alpha - rk^{r - 1} + (k + 1)^{r}} \beta_{k} V(z^{k + 1}).
        \end{split}
    \end{equation}
    Here, $r\in (0, 1]$, $\alpha, \theta > 0$ are taken as follows:
    \begin{itemize}
        \item If $r\in (0, 1)$, then $\frac{2}{\alpha} < \theta$; 
        \item If $r = 1$, then $\frac{2}{\alpha + 1} \leq \theta < \frac{1}{4}$. 
    \end{itemize}
    The sequence $(\beta_{k})_{k\geq 0}$ is positive, nondecreasing, and it satisfies
    \begin{equation}\label{eq:discrete growth intro}
        \sup_{k\geq k_{0}}k^{r} \left(\frac{\beta_{k} - \beta_{k - 1}}{\beta_{k}} + \frac{2r}{k}\right) < \frac{1}{2\theta}
    \end{equation}
    for some positive integer $k_{0}$. 
\end{tcolorbox}
For $z_{*}$ an arbitrary zero of $V$ and $(z^{k})_{k\geq 0}$ given by \eqref{eq:algorithm intro}, we will prove that 
\[
    \| V(z^{k})\| = o\left(\frac{1}{k^{\rho + r} \beta_{k}}\right), \quad \langle z^{k} - z_{*}, V(z^{k})\rangle = o\left(\frac{1}{k^{2\rho} \beta_{k}}\right) \quad \text{and} \quad \bigl\| z^{k} - z^{k - 1}\bigr\| = o\left(\frac{1}{k^{\rho}}\right)
\]
as $k\to +\infty$, where $\rho = 1$ if $r = 1$, and $\rho \in (0, r)$ when $r\in (0, 1)$. Like in the continuous case, this allows us to get as close to the state-of-the-art rates as possible: if we assume that $(z^{k})_{k\geq 0}$  is bounded when $r\in (0, 1)$, we are able to plug $\rho = r$. The weak convergence of $(z^{k})_{k\geq 0}$  towards a zero of $V$ can be guaranteed in the cases $r = 0$ and $r = 1$, and in the case $r\in (0, 1)$ provided $(z^{k})_{k\geq 0}$  is assumed bounded.
\begin{remark}
    Similar to the continuous case, when $r\in (0, 1)$, we will later show that for some $0 < \delta < \frac{1}{2\theta}$ the sequence 
    \[
        \beta_{k} = \frac{1}{k^{2r}} \cdot e^{\left(\frac{1}{2\theta} - \delta\right) \frac{k^{1 - r}}{1 - r}}
\ \forall k \geq k_0    \]
    fulfills the discrete growth condition \eqref{eq:discrete growth intro}. Hence, a convergence rate of $o\left(e^{-\left(\frac{1}{2\theta} - \delta\right) \frac{k^{1 - r}}{1 - r}}\right)$ is exhibited by $\| V(z^{k})\|$ and $\langle z^{k} - z_{*}, V(z^{k})\rangle$ as $k\to +\infty$. To our knowledge, this is the first time such fast rates have been achieved in the general context of implicit algorithms governed by monotone and continuous operators. 
\end{remark}

\section{Continuous time system}

\subsection{Convergence rates and weak convergence of trajectories}
\textbf{Energy function}: to analyze the asymptotic properties of the solutions to \eqref{eq:system r=s in (0, 1)}, we will make use of the following energy function: 
\begin{align}
    \mathcal{E}_{\lambda}(t) = & \: \frac{1}{2} \Bigl\| 2\lambda t^{\rho - r}(z(t) - z_{*}) + 2t^{\rho}\Dot{z}(t) + \theta t^{\rho + r} \beta(t) V(z(t))\Bigr]\Bigr\|^{2} \hypertarget{eq:energy function r=s line 1}{\label{eq:energy function r=s line 1}} \\
    &+ 2\lambda t^{2(\rho - r)}\bigl(\alpha - (2\rho - r)t^{r - 1} - \lambda\bigr) \|z(t) - z_{*}\|^{2} \hypertarget{eq:energy function r=s line 2}{\label{eq:energy function r=s line 2}} \\
    &+ 2\lambda\theta t^{2\rho} \beta(t) \langle z(t) - z_{*}, V(z(t))\rangle \hypertarget{eq:energy function r=s line 3}{\label{eq:energy function r=s line 3}} \\
    &+ \frac{\theta^{2}}{2} t^{2(\rho + r)} \beta^{2}(t) \| V(z(t))\|^{2}. \hypertarget{eq:energy function r=s line 4}{\label{eq:energy function r=s line 4}}
\end{align}
Here, $\rho $ and $\lambda$ are taken as follows: 
\begin{itemize}
    \item If $r = 0$, then $0 < \lambda < \alpha$ and $\rho = r = 0$;
    \item If $r\in (0, 1)$, then $0 < \lambda < \alpha$ and $0 < \rho < r$;
    \item If $r = 1$, then $0 < \lambda < \alpha - 1$ and $\rho = r = 1$. 
\end{itemize}

\begin{theorem}\label{eq:main theorem continuous}
    Suppose that $\alpha, \theta > 0$, $r\in [0, 1]$ and that $\beta :[t_{0}, +\infty) \to (0, +\infty)$ satisfy the assumptions laid down in subsection \eqref{subsec:our work}. Let $t\mapsto z(t)$ be a solution to \eqref{eq:system r=s in (0, 1)} and let $z_{*}$ be a zero of $V$. Consider the following convergence rates as $t\to +\infty$: 
    \begin{equation}\label{eq:convergence rates of the main theorem}
        \| V(z(t))\| = o\left(\frac{1}{t^{\rho + r} \beta(t)}\right), \quad \langle z(t) - z_{*}, V(z(t))\rangle = o\left(\frac{1}{t^{2\rho}\beta(t)}\right), \quad \bigl\| \Dot{z}(t)\bigr\| = o\left(\frac{1}{t^{\rho}}\right). 
    \end{equation}
    The following statements are true:
    \begin{enumerate}[\rm (i)]
        \item If $r\in (0, 1)$, the above rates hold for $\rho \in (0, r)$. Furthermore, if $t\mapsto z(t)$ is bounded, then these rates hold for $\rho = r$. 
        \item If $r = 0$ or $r = 1$, then $t\mapsto z(t)$ is bounded; moreover, the above rates hold for $\rho = 0$ (if $r = 0$) and for $\rho = 1$ (if $r = 1$).
        \item If $r = 0$ or $r = 1$ or $\Bigl( r\in (0, 1)$ and $t\mapsto z(t)$ is bounded $\Bigr)$, then $z(t)$ converges weakly to a zero of $V$ as $t\to +\infty$.
    \end{enumerate}
\end{theorem}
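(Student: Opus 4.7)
The plan is to use the Lyapunov/energy function $\mathcal{E}_{\lambda}(t)$ just introduced before the statement, and to extract the rates and boundedness claims from the sign structure of its time derivative. The first step is a direct differentiation of $\mathcal{E}_{\lambda}(t)$ along a solution $z(t)$ of \eqref{eq:system r=s in (0, 1)}, substituting $\Ddot{z}(t)$ by means of the ODE and exploiting the monotonicity of $V$ through the key inequality $\langle \Dot{z}(t), \frac{d}{dt}V(z(t))\rangle \geq 0$, which is valid whenever $V$ is monotone and continuous. After grouping terms one expects an estimate of the form
\[
    \Dot{\mathcal{E}}_{\lambda}(t) + c(t) \leq \varphi(t),
\]
where $c(t)\geq 0$ gathers positive dissipation terms (of order $t^{2\rho+r}\beta(t)\|V(z(t))\|^{2}$, $t^{2\rho-r}\|\Dot{z}(t)\|^{2}$, etc.) and $\varphi(t)$ is an integrable perturbation. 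The parameter ranges imposed on $\alpha$, $\theta$ and the growth condition \eqref{eq:growth condition case r=s} are precisely designed so that all the coefficients appearing in front of $c(t)$ have the correct sign for every $t$ large enough, and this is where I expect the most delicate bookkeeping to happen: the cases $r = 0$, $r\in(0,1)$ and $r = 1$ have to be distinguished, because the subleading terms $t^{r-1}$ and $t^{\rho-r}$ behave qualitatively differently.

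Once $\mathcal{E}_{\lambda}$ is shown to be bounded and its positive dissipation $c(t)$ to be integrable on $[t_{0}, +\infty)$, the first three lines of the energy immediately yield $\mathcal{O}$-versions of the rates in \eqref{eq:convergence rates of the main theorem}: line \hyperlink{eq:energy function r=s line 4}{(4)} gives $\|V(z(t))\| = \mathcal{O}\bigl(t^{-(\rho+r)}\beta(t)^{-1}\bigr)$, line \hyperlink{eq:energy function r=s line 3}{(3)} gives $\langle z(t)-z_{*}, V(z(t))\rangle = \mathcal{O}\bigl(t^{-2\rho}\beta(t)^{-1}\bigr)$, and by reading off the norm on line \hyperlink{eq:energy function r=s line 1}{(1)}, combined with a triangle inequality and the already-proved rate on $V(z(t))$, one gets $\|\Dot{z}(t)\| = \mathcal{O}(t^{-\rho})$ \emph{provided} the term $\lambda t^{\rho-r}\|z(t)-z_{*}\|$ is bounded. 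This is exactly where (i) and (ii) split: for $r=0$ and $r=1$ the coefficient $\alpha - (2\rho-r)t^{r-1} - \lambda$ on line \hyperlink{eq:energy function r=s line 2}{(2)} is bounded below by a positive constant for $t$ large (by the hypotheses $\lambda<\alpha$ and $\lambda<\alpha-1$ respectively), so $\|z(t)-z_{*}\|$ is bounded, while for $r\in(0,1)$ with $\rho<r$ the factor $t^{2(\rho-r)}$ vanishes and we can only control $t^{\rho-r}\|z(t)-z_{*}\|$; the stronger rate with $\rho=r$ then requires the additional standing assumption that $z(t)$ is bounded.

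To upgrade $\mathcal{O}$ to $o$, I would rely on the dissipation $\int_{t_{0}}^{+\infty} c(t)\,dt < +\infty$ together with a Barbălat-type argument: differentiating the quantities $t^{2(\rho+r)}\beta^{2}(t)\|V(z(t))\|^{2}$ and $t^{2\rho}\beta(t)\langle z(t)-z_{*},V(z(t))\rangle$ and showing, using \eqref{eq:system r=s in (0, 1)} and the already-established $\mathcal{O}$-bounds, that each is the sum of an $L^{1}$ function on $[t_{0},+\infty)$ and something non-negative, forces these non-negative quantities to vanish as $t\to +\infty$. The rate on $\|\Dot{z}(t)\|$ follows in the same way after subtracting its $\mathcal{O}$-contribution inside line \hyperlink{eq:energy function r=s line 1}{(1)}.

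For the weak convergence statement (iii), I would invoke Opial's lemma, which requires (a) existence of $\lim_{t\to +\infty}\|z(t)-z_{*}\|$ for every zero $z_{*}$ of $V$, and (b) every weak sequential cluster point of $z(t)$ being a zero of $V$. Condition (b) is straightforward from $\|V(z(t))\|\to 0$ combined with the fact that the graph of a continuous monotone operator is sequentially closed in the weak$\times$strong topology. Condition (a) is the main obstacle: it will be obtained by introducing an auxiliary anchor function, typically of the form $h(t)=\tfrac12\|z(t)-z_{*}\|^{2}+\text{correction}$, and showing through another differentiation that $\Ddot{h}(t)+\mu(t)\Dot{h}(t)$ is integrable (so that $\lim h(t)$ exists) by a standard lemma for second-order differential inequalities with vanishing damping. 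The dichotomy between the three regimes for $r$ re-enters here because the admissible anchor and the integrability of the correction differ between the bounded and unbounded cases.
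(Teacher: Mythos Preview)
Your outline has the right architecture (Lyapunov decay $\Rightarrow$ $\mathcal{O}$-rates $\Rightarrow$ $o$-rates $\Rightarrow$ Opial), but it glosses over the two places where the real work lies, and in one of those places your proposed route seems to hit a genuine obstacle.

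\textbf{The sign-indefinite cross term.} After differentiating $\mathcal{E}_{\lambda}$ and substituting the ODE, one of the surviving terms is a multiple of $\langle \Dot{z}(t), V(z(t))\rangle$, and this has no sign. It does \emph{not} become nonpositive just by choosing the parameters in the stated ranges; you have to absorb it by sacrificing fixed fractions of the good $\|\Dot{z}(t)\|^{2}$ and $\|V(z(t))\|^{2}$ dissipation terms and checking a discriminant condition $B^{2}-AC\le 0$ for the resulting quadratic form. That discriminant condition, after using the growth bound \eqref{eq:growth condition case r=s}, becomes a quadratic inequality in $\varepsilon = \alpha - \lambda$ (or $\alpha-1-\lambda$ when $r=1$), and it is satisfied only on an open interval of $\lambda$'s strictly inside $(0,\alpha)$. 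This is where the parameter hypotheses $\frac{2}{\alpha}<\theta$ (resp.\ $\frac{2}{\alpha+1}\le\theta<\frac12$) actually enter; your ``delicate bookkeeping'' sentence does not capture this mechanism.

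\textbf{The upgrade to $o$-rates.} The paper does \emph{not} use a Barb\u{a}lat argument on the individual quantities, and I do not see how your version would work: to control $\frac{d}{dt}\bigl(t^{2(\rho+r)}\beta^{2}(t)\|V(z(t))\|^{2}\bigr)$ you need a handle on $\langle V(z(t)),\frac{d}{dt}V(z(t))\rangle$, and the dissipation gives you nothing useful there. The paper's device is to run the Lyapunov argument for \emph{two} admissible values $\lambda_{r,1}\neq\lambda_{r,2}$ (both inside the interval produced by the discriminant step above). Since each $\mathcal{E}_{\lambda_{r,i}}$ converges, their difference forces the auxiliary quantity
\[
p(t)=\tfrac12(\alpha-rt^{r-1})\|z(t)-z_{*}\|^{2}+t^{r}\bigl\langle z(t)-z_{*},\,\Dot{z}(t)+\theta t^{r}\beta(t)V(z(t))\bigr\rangle
\]
to converge. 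Rewriting $\mathcal{E}_{\lambda_{r,1}}(t)=4\lambda_{r,1}p(t)+h(t)$ with $h(t)=t^{2r}\|\Dot{z}+\theta t^{r}\beta V(z)\|^{2}+t^{2r}\|\Dot{z}\|^{2}$ then shows $h$ has a limit; combined with $\int t^{-r}h(t)\,dt<\infty$ (from the integrated dissipation) and $\int t^{-r}\,dt=\infty$, that limit must be zero, which delivers all three $o$-rates at once. The same $p(t)$ is then what makes the Opial anchor $q(t)=\tfrac12\|z(t)-z_{*}\|^{2}+\theta\int t^{r}\beta\langle z-z_{*},V(z)\rangle$ work, via the lemma ``$\lim(aq+t^{r}\Dot q)$ exists $\Rightarrow \lim q$ exists''. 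Without the two-$\lambda$ trick you have neither the $o$-rates nor condition (a) of Opial.
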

\begin{proof}

Let us first briefly describe the idea of our proof to help the readers follow it more easily.
\paragraph{Sketch of the proof.}
We start with the energy function $\mathcal{E}_{\lambda}(t)$. By carefully examining its derivative, we can show that for some situations $\mathcal{E}_{\lambda}(t)$ is eventually nonincreasing and thus bounded.
Notice that the energy function can be decomposed into four smaller parts (lines \eqref{eq:energy function r=s line 1}-\eqref{eq:energy function r=s line 4}) which will be respectively denoted by $\mathcal{E}_{\lambda}^{i}$, with $i = 1, 2, 3, 4$. We will compute their time derivatives separately, and after some preliminary estimates, we see that $\frac{d}{dt}\mathcal{E}_{\lambda}(t)$ is bounded from above by either nonpositive terms or terms that can be combined to become nonpositive after completing the square.

From here, we can deduce the convergence rates as in \eqref{eq:convergence rates of the main theorem} by considering different scenarios.
For the critical cases $r=0$ and $r=1$, we can further show that the trajectories are bounded.

Finally, by utilizing the Opial's lemma \ref{lem:Opial}, we can show that the trajectories $z(t)$ converge weakly to a zero of $V$ as $t\to +\infty$, provided that it is bounded.
In order to do that, we need to produce in addition some integrability results and show that $\mathcal{E}_{\lambda}(t)$ converges for more than one value of $\lambda$.


\paragraph{Deriving the convergence rates \eqref{eq:convergence rates of the main theorem}.}
    First of all, notice that $\mathcal{E}_{\lambda}(t)$ is eventually nonnegative. Indeed, go back to line \eqref{eq:energy function r=s line 2}. If $r\in [0, 1)$, then $0 < \lambda < \alpha$ together with the fact that $t^{r - 1}\to 0$ as $t\to +\infty$ gives $\alpha - (2\rho - r)t^{r - 1} - \lambda \geq 0$ for large enough $t$. If $r = 1$, then $\alpha - (2\rho - r)t^{r - 1} - \lambda \equiv \alpha - 1 - \lambda > 0$ by assumption. 
    To carry out the proof, we compute the time derivative of $\mathcal{E}_{\lambda}$ at a $t\geq t_{0}$.  
    \begin{align*}
        \frac{d}{dt} \hyperlink{eq:energy function r=s line 1}{\mathcal{E}_{\lambda}^{1}(t)}  = & \: \Biggl\langle 2\lambda t^{\rho - r}(z(t) - z_{*}) + 2t^{\rho} \Dot{z}(t) + \theta t^{\rho + r} \beta(t) V(z(t)), \\
        &\quad\quad 2\lambda(\rho - r)t^{\rho - r - 1}(z(t) - z_{*}) + \bigl( 2\lambda t^{\rho - r} + 2\rho t^{\rho - 1} - 2t^{\rho - r} \alpha\bigr) \Dot{z}(t) \\
        &\quad\quad +\Bigl[ \bigl(\theta(\rho + r) t^{\rho + r - 1} - 2t^{\rho}\bigr)\beta(t) + \theta t^{\rho + r} \Dot{\beta}(t)\Bigr]V(z(t)) - \theta t^{\rho + r} \beta(t) \frac{d}{dt}V(z(t)) \Biggr\rangle \\
        = & \: 4\lambda^{2}(\rho - r)t^{2(\rho - r) - 1} \|z(t) - z_{*}\|^{2} \\
        &+ \Bigl[ 2\lambda t^{\rho - r} \bigl( 2\lambda t^{\rho - r} + 2\rho t^{\rho - 1} - 2t^{\rho - r} \alpha\bigr) + 4\lambda (\rho - r) t^{2\rho - r - 1}\Bigr] \bigl\langle z(t) - z_{*}, \Dot{z}(t)\bigr\rangle \\
        &+ \Biggl\{ \! 2\lambda t^{\rho - r} \Bigl[ \! \bigl(\theta(\rho \! + \! r) t^{\rho + r - 1} \! - 2t^{\rho}\bigr)\beta(t) \! + \! \theta t^{\rho + r}\Dot{\beta}(t) \! \Bigr] \! + \! 2\lambda \theta(\rho \! - \! r) t^{2\rho - 1} \beta(t) \! \Biggr\} \langle z(t) \! - \! z_{*}, V(z(t))\rangle \\ 
        &- 2\lambda\theta t^{2\rho} \beta(t) \left\langle z(t) - z_{*}, \frac{d}{dt}V(z(t))\right\rangle + 2t^{\rho} \bigl( 2\lambda t^{\rho - r} + 2\rho t^{\rho - 1} - 2 t^{\rho - r}\alpha\bigr) \bigl\| \Dot{z}(t)\bigr\|^{2} \\
        &+ \Biggl\{ 2t^{\rho} \Bigl[ \bigl( \theta(\rho + r) t^{\rho + r - 1} - 2t^{\rho}\bigr)\beta(t) + \theta t^{\rho + r} \Dot{\beta}(t)\Bigr] \\
        &\quad\quad +\theta t^{\rho + r} \beta(t) \bigl( 2\lambda t^{\rho - r} + 2\rho t^{\rho - 1} - 2t^{\rho - r} \alpha\bigr)\Biggr\} \bigl\langle \Dot{z}(t), V(z(t))\bigr\rangle \\
        &-2\theta t^{2\rho + r}\beta(t) \left\langle \Dot{z}(t), \frac{d}{dt}V(z(t))\right\rangle - \theta^{2} t^{2 \left( \rho + r \right)}\beta^{2}(t) \left\langle V(z(t)), \frac{d}{dt}V(z(t))\right\rangle \\
        &+ \theta t^{\rho + r}\beta(t) \Bigl[\bigl(\theta(\rho + r) t^{\rho + r - 1} - 2t^{\rho}\bigr)\beta(t) + \theta t^{\rho + r}\Dot{\beta}(t)\Bigr]\|V(z(t))\|^{2}, \\
        \frac{d}{dt} \hyperlink{eq:energy function r=s line 2}{\mathcal{E}_{\lambda}^{2}(t)} = & \: 2\lambda \Bigl[ 2\alpha (\rho - r) t^{2(\rho - r) - 1} - (2\rho - r)(2\rho - r - 1)t^{2\rho - r - 2} - 2(\rho - r) \lambda t^{2(\rho - r) - 1}\Bigr] \|z(t) - z_{*}\|^{2} \\
        &+ 4\lambda t^{2(\rho - r)} \bigl(\alpha - (2\rho - r)t^{r - 1} - \lambda\bigr) \bigl\langle z(t) - z_{*}, \Dot{z}(t)\bigr\rangle, \\
        \frac{d}{dt} \hyperlink{eq:energy function r=s line 3}{\mathcal{E}_{\lambda}^{3}(t)} = & \: \theta \Bigl[ 4\lambda \rho t^{2\rho - 1}\beta(t) + 2\lambda t^{2\rho}\Dot{\beta}(t)\Bigr] \langle z(t) - z_{*}, V(z(t))\rangle + 2\lambda \theta t^{2\rho} \beta(t) \bigl\langle \Dot{z}(t), V(z(t))\bigr\rangle \\
        &+ 2\lambda \theta t^{2\rho} \beta(t) \left\langle z(t) - z_{*}, \frac{d}{dt}V(z(t))\right\rangle, \\
        \frac{d}{dt} \hyperlink{eq:energy function r=s line 4}{\mathcal{E}_{\lambda}^{4}(t)} = & \: \frac{\theta^{2}}{2}\Bigl[ 2(\rho + r) t^{2(\rho + r) - 1} \beta^{2}(t) + 2t^{2(\rho + r)} \beta(t) \Dot{\beta}(t)\Bigr]\|V(z(t))\|^{2} \\
        &+ \theta^{2} t^{2(\rho + r)} \beta^{2}(t) \left\langle V(z(t)), \frac{d}{dt}V(z(t))\right\rangle.
    \end{align*}
    Putting everything together yields
    \begin{align}
        \frac{d}{dt}\mathcal{E}_{\lambda}(t) = & \: \Biggl\{4\lambda^{2}(\rho - r)t^{2(\rho - r) - 1} + 2\lambda \Bigl[ 2\alpha (\rho - r) t^{2(\rho - r) - 1} \nonumber\\
        &\quad\quad - (2\rho - r)(2\rho - r - 1)t^{2\rho - r - 2} - 2(\rho - r) \lambda t^{2(\rho - r) - 1}\Bigr]\Biggr\}\|z(t) - z_{*}\|^{2} \label{eq:d energy function r=s line 1}\\
        &+ \Biggl[ 2\lambda t^{\rho - r} \bigl( 2\lambda t^{\rho - r} + 2\rho t^{\rho - 1} - 2t^{\rho - r}\alpha\bigr) + 4\lambda (\rho - r)t^{2\rho - r - 1} \nonumber\\
        &\quad\quad + 4\lambda t^{2(\rho - r)} \bigl(\alpha - (2\rho - r)t^{r - 1} - \lambda\bigr)\Biggr] \bigl\langle z(t) - z_{*}, \Dot{z}(t)\bigr\rangle \label{eq:d energy function r=s line 2}\\
        &+ \Biggl\{ 2\lambda t^{\rho - r} \Bigl[ \bigl(\theta(\rho + r) t^{\rho + r - 1} - 2t^{\rho}\bigr)\beta(t) + \theta t^{\rho + r}\Dot{\beta}(t) \Bigr] + 2\lambda\theta(\rho - r) t^{2\rho - 1} \beta(t) \nonumber\\
        &\quad\quad + \theta \Bigl[ 4\lambda \rho t^{2\rho - 1}\beta(t) + 2\lambda t^{2\rho}\Dot{\beta}(t)\Bigr] \Biggr\} \langle z(t) - z_{*}, V(z(t))\rangle \label{eq:d energy function r=s line 3}\\
        &+ 2t^{\rho} \bigl( 2\lambda t^{\rho - r} + 2\rho t^{\rho - 1} - 2 t^{\rho - r}\alpha\bigr) \bigl\| \Dot{z}(t)\bigr\|^{2} \label{eq:d energy function r=s line 4}\\
        &+ \Biggl\{ 2t^{\rho} \Bigl[ \bigl(\theta(\rho + r) t^{\rho + r - 1} - 2t^{\rho}\bigr)\beta(t) +\theta t^{\rho + r} \Dot{\beta}(t)\Bigr] \nonumber\\
        &\quad\quad + \theta t^{\rho + r} \beta(t) \bigl( 2\lambda t^{\rho - r} + 2\rho t^{p - 1} - 2t^{\rho - r} \alpha\bigr) + 2\lambda\theta t^{2\rho}\beta(t) \Biggr\} \bigl\langle \Dot{z}(t), V(z(t))\bigr\rangle \label{eq:d energy function r=s line 5}\\
        &- 2\theta t^{2\rho + r} \beta(t) \left\langle \Dot{z}(t), \frac{d}{dt}V(z(t))\right\rangle \label{eq:d energy function r=s line 6}\\
        &+ \Biggl\{ \theta t^{\rho + r} \beta(t) \Bigl[ \bigl( \theta(\rho + r) t^{\rho + r - 1} - 2t^{\rho} \bigr)\beta(t) + \theta t^{\rho + r} \Dot{\beta}(t)\Bigr] \nonumber\\
        &\quad\quad +\theta^{2} \Bigl[ (\rho + r) t^{2(\rho + r) - 1} \beta^{2}(t) + t^{2(\rho + r)} \beta(t) \Dot{\beta}(t)\Bigr] \Biggr\} \|V(z(t))\|^{2}. \label{eq:d energy function r=s line 7}
    \end{align}
    We will check that lines \eqref{eq:d energy function r=s line 1}, \eqref{eq:d energy function r=s line 3}, \eqref{eq:d energy function r=s line 4}, \eqref{eq:d energy function r=s line 6} and \eqref{eq:d energy function r=s line 7} are nonpositive for large enough $t$, and that line \eqref{eq:d energy function r=s line 2} vanishes. 

    \eqref{eq:d energy function r=s line 1}: we have
    \begin{align*}
        & \: 4\lambda^{2} (\rho - r) t^{2(\rho - r) - 1} + 2\lambda \Bigl[2\alpha (\rho - r) t^{2(\rho - r) - 1} - (2\rho - r)(2\rho - r - 1)t^{2\rho - r - 2} -2\lambda(\rho - r)t^{2(\rho - r) - 1}\Bigr] \\
        = & \: 4\lambda^{2} (\rho - r) t^{2(\rho - r) - 1} \! + 2\lambda \Bigl[2\alpha (\rho - r) t^{2(\rho - r) - 1} \! - (2\rho - r)(2\rho - r - 1)t^{2\rho - r - 2}\Bigr] - 4\lambda^{2} (\rho - r) t^{2(\rho - r) - 1} \\
        = & \: 2\lambda t^{2(\rho - r) - 1} \Big[ 2\alpha (\rho - r) - (2\rho - r)(2\rho - r - 1)t^{r - 1}\Bigr]. 
    \end{align*}
    If $r = 0$, then $\rho = 0$ and the line vanishes. If $r \in (0, 1)$, then $\rho < r$, which gives $2\alpha(\rho - r) < 0$. Since $t^{r - 1} \to 0$ as $t\to +\infty$, for large enough $t$ it holds $2\alpha(\rho - r) - (2\rho - r)(2\rho - r - 1)t^{r - 1} < 0$. If $r = 1$, then $\rho = 1$ and this line again vanishes.

    \eqref{eq:d energy function r=s line 2}: we have 
    \begin{align*}
        &\: 2\lambda t^{\rho - r} \bigl( 2\lambda t^{\rho - r} + 2\rho t^{\rho - 1} - 2\rho t^{\rho - r}\alpha\bigr) + 4\lambda (\rho - r) t^{2\rho - r - 1}  + 4\lambda t^{2(\rho - r)}\bigl(\alpha - (2\rho - r)t^{r - 1} - \lambda\bigr)\\
        = &\: 4\lambda t^{2(\rho - r)}\bigl(\lambda + (2\rho - r)t^{r - 1} - \alpha\bigr) + 4\lambda t^{2(\rho - r)}\bigl(\alpha - (2\rho - r)t^{r - 1} - \lambda\bigr),
    \end{align*}
    which means that this line is identically zero. 

    \eqref{eq:d energy function r=s line 3}: the monotonicity of $V$ ensures $\langle z(t) - z_{*}, V(z(t))\rangle\geq 0$ for all $t\geq t_{0}$. We have
    \begin{align*}
        &\: 2\lambda t^{\rho - r}\Bigl[\bigl(\theta(\rho + r)t^{\rho + r - 1} - 2t^{\rho}\bigr)\beta(t) + \theta t^{\rho + r}\Dot{\beta}(t)\Bigr] + 2\lambda (\rho - r) \theta t^{2\rho - 1}\beta(t) \\
        &+ \theta \Bigl[ 4\lambda\rho t^{2\rho - 1}\beta(t) + 2\lambda t^{2\rho}\Dot{\beta}(t)\Bigr] \\
        = & \: 2\lambda t^{2\rho - r} \Bigl\{ \Bigl[ \theta\bigl(\rho + r + 2\rho\bigr)t^{r - 1} - 2 + (\rho - r)\theta t^{r - 1}\Bigr]\beta(t) + 2\theta t^{r} \Dot{\beta}(t)\Bigr\} \\
        = & \: 4\lambda t^{2\rho - r} \Bigl[ \bigl(2 \rho \theta t^{r - 1} - 1\bigr)\beta(t) + \theta t^{r} \Dot{\beta}(t)\Bigr] \\
        \leq & \: 4\lambda t^{2\rho - r} \Bigl[ \bigl(2r \theta t^{r - 1} - 1\bigr)\beta(t) + \theta t^{r} \Dot{\beta}(t)\Bigr] \\
        \leq & \: 0,
    \end{align*}
    where the last equality comes from $\rho + r + 2 \rho + \rho - r = 4 \rho$, and the last inequality is a consequence of the growth condition on $\beta$.

    \eqref{eq:d energy function r=s line 4}: arguing as we did at the beginning of the proof, we have
    \[
        2t^{\rho} \bigl( 2\lambda t^{\rho - r} + 2\rho t^{\rho - 1} - 2t^{\rho - r} \alpha\bigr) = 4 t^{2\rho - r}\bigl( \lambda + \rho t^{r - 1} - \alpha\bigr) \leq 0
    \] 
    for large enough $t$. 
    
    \eqref{eq:d energy function r=s line 6}: since the monotonicity of $V$ ensures $\left\langle \Dot{z}(t), \frac{d}{dt}V(z(t))\right\rangle\geq 0$ for every $t\geq t_{0}$, this line is evidently nonpositive.

    \eqref{eq:d energy function r=s line 7}: we have 
    \begin{align*}
        &\: \theta t^{2\rho + r} \beta(t) \Bigl[ \bigl(\theta (\rho + r)t^{r - 1} - 2\bigr)\beta(t) + \theta t^{r} \Dot{\beta}(t)\Bigr] + \theta t^{2\rho + r}\beta(t) \Bigl[ \theta(\rho + r) t^{r - 1} \beta(t) + \theta t^{r} \Dot{\beta}(t)\Bigr] \\
        = &\: 2\theta t^{2\rho + r} \beta(t) \Bigl[ \bigl(\theta(\rho + r) t^{r - 1} - 1\bigr)\beta(t) + \theta t^{r} \Dot{\beta}(t)\Bigr] \\
        \leq &\: 2\theta t^{2\rho + r} \beta(t) \Bigl[ \bigl(2r\theta t^{r - 1} - 1\bigr)\beta(t) + \theta t^{r} \Dot{\beta}(t)\Bigr] \\
        \leq &\: 0, 
    \end{align*}
    where we used $\rho + r \leq 2r$, and the growth condition on $\beta$. 

    Additionally, we need the term accompanying $\bigl\langle \Dot{z}(t), V(z(t))\bigr\rangle$, which according to \eqref{eq:d energy function r=s line 5} reads 
    \begin{align*}
        &\: 2t^{\rho}\Bigl[ \bigl(\theta(\rho + r) t^{\rho + r - 1} - 2t^{\rho}\bigr)\beta(t) + \theta t^{\rho + r} \Dot{\beta}(t)\Bigr] + \theta t^{\rho + r} \beta(t)\bigl(2\lambda t^{\rho - r} + 2\rho t^{\rho - 1} - 2t^{\rho - r}\alpha\bigr) + 2\lambda \theta t^{2\rho} \beta(t) \\
        = &\: 2t^{2\rho} \Bigl\{ \Bigl[ \bigl(\theta (\rho + r) t^{r - 1} - 2\bigr) + \theta\bigl( \lambda + \rho t^{r - 1} - \alpha\bigr) + \lambda\theta\Bigr] \beta(t)  + \theta t^{r}\Dot{\beta}(t)\Bigr\} \\
        = &\: 2t^{2\rho} \left\{ \left[ 2\theta \left(\lambda + \frac{\rho + r}{2} t^{r - 1} - \alpha\right) + \bigl( \theta \alpha + \theta \rho t^{r - 1} - 2\bigr)\right]\beta(t) + \theta t^{r} \Dot{\beta}(t)\right\}. 
    \end{align*}
    
    According to the computations we have now made for \eqref{eq:d energy function r=s line 1}-\eqref{eq:d energy function r=s line 7}, and taking into account that $\rho \leq \frac{\rho + r}{2}$, for large enough $t$ we have  
    \begin{align}
        \frac{d}{dt}\mathcal{E}_{\lambda}(t) \leq &\: 2\lambda t^{2(\rho - r) - 1} \Big[ 2\alpha (\rho - r) - (2\rho - r)(2\rho - r - 1)t^{r - 1}\Bigr] \|z(t) - z_{*}\|^{2} \label{eq:d general energy function line 1 for quad}\\
        &+ 4\lambda t^{2\rho - r} \Bigl[ \bigl(2r\theta t^{r - 1} - 1\bigr)\beta(t) + \theta t^{r} \Dot{\beta}(t)\Bigr] \langle z(t) - z_{*}, V(z(t))\rangle \label{eq:d general energy function line 2 for quad}\\
        &+ 4 t^{2\rho - r}\left( \lambda + \frac{\rho + r}{2} t^{r - 1} - \alpha\right) \bigl\| \Dot{z}(t)\bigr\|^{2} \label{eq:d general energy function line 3 for quad}\\
        &+ 2t^{2\rho} \left\{ \left[ 2\theta \left(\lambda + \frac{\rho + r}{2} t^{r - 1} - \alpha\right) + \bigl( \theta \alpha + \theta \rho t^{r - 1} - 2\bigr)\right]\beta(t) + \theta t^{r} \Dot{\beta}(t)\right\} \bigl\langle \Dot{z}(t), V(z(t))\bigr\rangle \label{eq:d general energy function line 4 for quad}\\
        &+ 2\theta t^{2\rho + r} \beta(t) \Bigl[ \bigl(2r\theta t^{r - 1} - 1\bigr)\beta(t) + \theta t^{r} \Dot{\beta}(t)\Bigr] \|V(z(t))\|^{2}. \label{eq:d general energy function line 5 for quad}
    \end{align}
    We will work with lines \eqref{eq:d general energy function line 3 for quad}-\eqref{eq:d general energy function line 5 for quad}. Define
    \[
        \varepsilon(t) := \alpha - \frac{\rho + r}{2} t^{r - 1} - \lambda, \quad c(t) := \theta\alpha + \theta \rho t^{r - 1} - 2. 
    \]
    Using our assumptions on $\alpha, \theta$ and $\lambda$, we have that $\varepsilon(t) > 0$ and $c(t)\geq 0$ for large enough $t$. We will show that for certain $\lambda$, for $t$ sufficiently large it holds
    \begin{align}
        0 \geq & \: -3\varepsilon(t) t^{2\rho - r} \bigl\| \Dot{z}(t)\bigr\|^{2} + 2t^{2\rho} \Bigl[ \bigl(-2\theta \varepsilon(t) + c(t)\bigr) \beta(t) + \theta t^{r} \Dot{\beta}(t)\Bigr] \bigl\langle \Dot{z}(t), V(z(t))\bigr\rangle \nonumber\\
        &+ \frac{4}{3} \theta t^{2\rho + r} \beta(t) \Bigl[ \bigl(2r\theta t^{r - 1} - 1\bigr)\beta(t) + \theta t^{r} \Dot{\beta}(t)\Bigr] \|V(z(t))\|^{2}. \label{eq:quad statement implied by B^2 - AC}
    \end{align}
    We will use Lemma \ref{lem:quad} with $X = \Dot{z}(t)$ and $Y = V(z(t))$, and $A, B, C$ chosen as follows: 
    \begin{gather*}
        B = t^{2\rho} \Bigl[ \bigl(-2\theta \varepsilon(t) + c(t)\bigr) \beta(t) + \theta t^{r} \Dot{\beta}(t)\Bigr], \quad A = -3\varepsilon(t) t^{2\rho - r}, \\
        C = \frac{4}{3} \theta t^{2\rho + r} \beta(t) \Bigl[ \bigl(2r\theta t^{r - 1} - 1\bigr)\beta(t) + \theta t^{r} \Dot{\beta}(t)\Bigr].
    \end{gather*}
    According to this lemma, it is sufficient to show that $B^{2} - AC \leq 0$. We write 
    \begin{align*}
        B^{2} - AC = & \: t^{4\rho} \Bigl[ \bigl(-2\theta \varepsilon(t) \! + \! c(t)\bigr) \beta(t) \! + \! \theta t^{r} \Dot{\beta}(t) \Bigr]^{2} \! \! \! + \! 3 \! \cdot \! \frac{4}{3} \varepsilon(t) \theta t^{2\rho - r} \! \cdot \! t^{2\rho + r} \beta(t) \Bigl[ \bigl(2r\theta t^{r - 1} \! - \! 1\bigr)\beta(t) \! + \! \theta t^{r} \Dot{\beta}(t) \Bigr] \\
        = &\: t^{4p} \Bigl[ \bigl(-2\theta \varepsilon(t) + c(t)\bigr)^{2}\beta^{2}(t) + 2\theta t^{r} \bigl(-2\theta \varepsilon(t) + c(t)\bigr)\beta(t)\Dot{\beta}(t) + \theta^{2} t^{2r} \bigl( \Dot{\beta}(t)\bigr)^{2}\Bigr] \\
        &+ 4 \theta \bigl( 2r\theta t^{r - 1} - 1\bigr)\varepsilon(t) t^{4\rho}\beta^{2}(t) + 4\theta^{2} \varepsilon(t) t^{4\rho + r} \beta(t) \Dot{\beta}(t) \\
        = & \: t^{4\rho} \Bigl[ \bigl(-2\theta \varepsilon(t) + c(t)\bigr)^{2} \beta^{2}(t) \! + \! 2\theta t^{r} c(t) \beta(t) \Dot{\beta}(t) + \theta^{2} t^{2r} \bigl( \Dot{\beta}(t)\bigr)^{2} \! + \! 4\theta \bigl(2r\theta t^{r - 1} - 1\bigr)\varepsilon(t) \beta^{2}(t)\Bigr].
    \end{align*}
    Now, according to the growth condition \eqref{eq:growth condition case r=s}, there exists $\delta > 0$ such that 
    \begin{equation}\label{eq:growth condition delta}
        \frac{t^{r} \Dot{\beta}(t)}{\beta(t)} \leq \frac{1}{\theta} - 2r t^{r - 1} - \delta \quad \forall t\geq t_{0}. 
    \end{equation}
    This entails
    \begin{equation} \label{eq:derived ineq from growth condition r=s}
        \theta t^{r} \beta(t) \Dot{\beta}(t) \leq \Bigl((1 - 2r\theta t^{r - 1}) - \delta\theta\Bigr)\beta^{2}(t) \quad \text{and} \quad \theta^{2} t^{2r} \bigl( \Dot{\beta}(t)\bigr)^{2} \leq \Bigl((1 - 2 r \theta t^{r - 1}) - \delta\theta\Bigr)^{2} \beta^{2}(t).
    \end{equation}
    Since $c(t) \geq 0$, we now arrive at 
    \begin{align}
        B^{2} - AC \leq & \: t^{4\rho} \beta^{2}(t)\Bigl[ \bigl(-2\theta \varepsilon(t) + c(t)\bigr)^{2} + 2c(t)\Bigl( (1 - 2 r \theta t^{r - 1}) - \delta\theta\Bigr) \nonumber\\
        &\quad\quad + \Bigl( (1 - 2 r \theta t^{r - 1}) - \delta\theta\Bigr)^{2} + 4\theta(2 r \theta t^{r - 1} - 1)\varepsilon(t)\Bigr] \nonumber\\
        = & \: t^{4\rho} \beta^{2}(t) \left\{ 4\theta^{2} \varepsilon^{2}(t) - 4\theta \left( 1 - 2 r \theta t^{r - 1} + c(t) \right) \varepsilon(t) + \left[ c(t) + \left( \left( 1 - 2 r \theta t^{r - 1} \right) - \delta\theta \right) \right] ^{2} \right\} \nonumber\\
        = & \: t^{4\rho} \beta^{2}(t) \pi(t), \label{eq:improved B2 - AC}
    \end{align}
    where $\pi(t)$ is the term between curly brackets. We will analyze $\pi(t)$ separating the cases $r\in [0, 1)$ and $r = 1$. 

    \fbox{Case $r \in [0, 1)$:} define 
    \[
        c_{r} := \theta \alpha - 2, \quad \varepsilon_{r} := \alpha - \lambda, \quad \mu(t) := 4\theta^{2} t^{2}  - 4\theta \left( c_{r} + 1 \right) t + \left( c_{r} + 1 - \delta\theta \right)^{2}. 
    \]
    Notice that $c(t) \to c_{r}$, $\varepsilon(t) \to \varepsilon_{r}$ and, therefore, $\pi(t) \to \mu(\varepsilon_{r})$ as $t\to +\infty$. The discriminant of the quadratic equation $\mu(t) = 0$ reads 
    \[
        \Delta = 16 \theta^{2} \left[ \left( c_{r} + 1 \right) ^{2} - \left( c_{r} + 1 - \delta\theta \right)^{2} \right]. 
    \]
    Since $0 < 1 - \delta\theta$, we have $c_{r} + 1 > c_{r} + 1 - \delta\theta > 0$ and thus $\Delta > 0$. This means that $\mu$ has two distinct roots: by setting $\Tilde{\Delta} := \frac{1}{8\theta^{2}} \sqrt{\Delta}$, these are given by 
    \[
        \underline{\varepsilon} := \frac{c_{r} + 1}{2\theta} - \Tilde{\Delta}, \quad \overline{\varepsilon} := \frac{c_{r} + 1}{2\theta} + \Tilde{\Delta}. 
    \]
    The midpoint between the two roots is given by $\varepsilon_{r, 1} := \frac{c_{r} + 1}{2\theta}$, which fulfills
    \[
        0 < \varepsilon_{r, 1} = \frac{c_{r} + 1}{2\theta} = \frac{\theta\alpha - 1}{2\theta} = \frac{\alpha}{2} - \frac{1}{2\theta} < \alpha. 
    \]
    Choose $\varepsilon_{r, 2} > 0$ such that $\varepsilon_{r, 1} - \Tilde{\Delta} < \varepsilon_{r, 2} < \varepsilon_{r, 1}$. Set $\lambda_{r, i} := \alpha - \varepsilon_{r, i}$, $i = 1, 2$. As we remarked previously, we have $\pi(t) \to \mu (\varepsilon_{r, i})$ as $t\to +\infty$. Therefore, for large enough $t$, we have 
    \[
        \pi(t) < \frac{\mu(\varepsilon_{r, i})}{2} = \frac{\mu(\alpha - \lambda_{r, i})}{2} < 0 \quad \text{for }i=1, 2. 
    \]
    Going back to \eqref{eq:improved B2 - AC}, this means that 
    \[
        B^{2} - AC \leq t^{4\rho} \beta^{2}(t) \frac{\mu(\alpha - \lambda_{r, i})}{2} < 0 \quad \text{for large enough }t\text{ and }i=1, 2.
    \]

    \fbox{Case $r = 1$:} here, we have 
    \[
        \varepsilon(t) \equiv \varepsilon := \alpha - 1 - \lambda, \quad \text{and} \quad c(t) \equiv c := \theta\alpha + \theta - 2.
    \]
    In this case, we define
    \[
        \mu(t) := 4\theta^{2} t^{2} - 4\theta\left(1 - 2\theta + c\right)t + \left( c + 1 - 2\theta - \delta\theta\right) ^{2}. 
    \]
    Notice that with this definition we have $\pi(t) \equiv \mu(\varepsilon)$. The discriminant of the quadratic equation $p(t)=0$ reads 
    \[
        \Delta = 16\theta^{2} \left[ \left(c + 1 - 2\theta\right))^{2} - \left(c + 1 - 2\theta - \delta\theta\right))^{2} \right].
    \]
    Since $0 < 1 - 2\theta - \delta\theta$, we have $c + 1 - 2\theta > c + 1 - 2\theta - \delta\theta > 0$ and thus $\Delta > 0$. Again, define $\Tilde{\Delta}$ as before. The roots of $\mu$ are given by
    \[
        \underline{\varepsilon} := \frac{c + 1 - 2\theta}{2\theta} - \Tilde{\Delta}, \quad \overline{\varepsilon} := \frac{c + 1 - 2\theta}{2\theta} + \Tilde{\Delta}.
    \]
    The midpoint between the roots is $\varepsilon_{1, 1} := \frac{c + 1 - 2\theta}{2\theta}$, which satisfies
    \[
        0 < \varepsilon_{1, 1} = \frac{c + 1 - 2\theta}{2\theta} = \frac{\theta \alpha - \theta - 1}{2\theta} = \frac{\alpha - 1}{2} - \frac{1}{2\theta} < \alpha - 1. 
    \]
    Similar as before, choose $\varepsilon_{1, 2} > 0$ such that $\varepsilon_{1, 1} - \Tilde{\Delta} < \varepsilon_{1, 2} < \varepsilon_{1, 1}$. Set $\lambda_{1, i} := \alpha - 1 - \varepsilon_{1, i}$, $i = 1, 2$. Since $\mu(\alpha - 1 - \lambda_{1, i}) = \mu(\varepsilon_{1, i}) < 0$ for $i = 1, 2$, going back to \eqref{eq:improved B2 - AC} gives 
    \[
        B^{2} - AC \leq t^{4\rho}\beta^{2}(t) \mu(\alpha - 1 - \lambda_{1, i}) < 0 \quad \text{for large enough }t\text{ and }i=1, 2.
    \]

    Summarizing, for the choices $\lambda = \lambda_{r, i}$, $r \in[0, 1]$, $i = 1, 2$ it holds, recalling lines \eqref{eq:d general energy function line 1 for quad}-\eqref{eq:d general energy function line 5 for quad}, together with inequality \eqref{eq:quad statement implied by B^2 - AC},  
    \begin{align}
        \frac{d}{dt}\mathcal{E}_{\lambda_{r, i}}(t) = &\: 2\lambda_{r, i} t^{2(\rho - r) - 1}\Bigl[ 2\alpha(\rho - r) - (2\rho - r)(2\rho - r - 1)t^{r - 1}\Bigr]\| z(t) - z_{*}\|^{2} \nonumber\\
        &+ 4\lambda_{r, i} t^{2\rho - r} \Bigl[ \bigl(2 r \theta t^{r - 1} - 1\bigr)\beta(t) + \theta t^{r} \Dot{\beta}(t)\Bigr] \langle z(t) - z_{*}, V(z(t))\rangle \nonumber\\
        &+ t^{2\rho - r}\left(\lambda_{r, i} + \frac{\rho + r}{2}t^{r - 1} - \alpha\right)\bigl\| \Dot{z}(t)\bigr\|^{2} \nonumber\\
        &+ \frac{2}{3}\theta t^{2\rho + r}\beta(t) \Bigl[ \bigl(2 r \theta t^{r - 1} - 1\bigr)\beta(t) + \theta t^{r} \Dot{\beta}(t)\Bigr] \| V(z(t))\|^{2} \nonumber\\
        \leq &\: 0, \label{eq:d energy function before changing rho for r}
    \end{align}
    where $t$ is taken large enough, say $t\geq T \geq t_{0}$. This means that $\mathcal{E}_{\lambda_{r, i}}$ monotonically decreases on $[T, +\infty)$, and thus 
    \begin{equation} \label{eq: energy function bounded}
        0 \leq \mathcal{E}_{\lambda_{r, i}}(t) \leq \mathcal{E}_{\lambda_{r, i}}(T) \text{ for }t\geq T\text{, }r\in [0, 1]\text{ and }i=1, 2.
    \end{equation}

\paragraph{Deducing convergence rates from the estimates.}
    We will now proceed by distinguishing between the cases $r\in (0, 1)$, $r = 0$ and $r = 1$.  

    \fbox{Case $r\in (0, 1)$:} according to the definition of the energy function \eqref{eq:energy function r=s line 1}-\eqref{eq:energy function r=s line 4} and \eqref{eq: energy function bounded}, for $t\geq T$ we have 
    \begin{equation}\label{eq: rates for <z(t) - z*, V(z(t))>, ||V(z(t))||}
        \langle z(t) - z_{*}, V(z(t))\rangle \leq \frac{\mathcal{E}_{\lambda_{r, 1}}(T)}{2\lambda_{r, 1} \theta}\cdot\frac{1}{t^{2\rho}\beta(t)} \quad \text{and} \quad \| V(z(t))\| \leq \frac{\sqrt{2\mathcal{E}_{\lambda_{r, 1}}(T)}}{\theta} \cdot \frac{1}{t^{\rho + r} \beta(t)}.
    \end{equation}
    Furthemore, say that for $t\geq T$ we have $(2\rho - r)t^{r - 1} < \xi < \alpha - \lambda_{r, 1}$. Going back to \eqref{eq:energy function r=s line 2}, this means that for $t\geq T$ it holds 
    \[
        2\lambda_{r, 1} t^{2(\rho - r)} \bigl(\alpha - \xi - \lambda_{r, 1}\bigr) \| z(t) - z_{*}\|^{2} \leq 2\lambda_{r, 1} t^{2(\rho - r)} \bigl(\alpha - (2\rho - r)t^{r - 1}) - \lambda_{r, 1}\bigr) \| z(t) - z_{*}\|^{2} \leq \mathcal{E}_{\lambda_{r, 1}}(T)
    \]
    and thus 
    \[
        2\lambda_{r, 1}t^{\rho - r}\| z(t) - z_{*}\| \leq \sqrt{\frac{2 \mathcal{E}_{\lambda_{r, 1}}(T)}{ \bigl(\alpha - \xi - \lambda_{r, 1}\bigr)}}.
    \]
    Combining this together with \eqref{eq:energy function r=s line 1}, \eqref{eq: energy function bounded} and \eqref{eq: rates for <z(t) - z*, V(z(t))>, ||V(z(t))||} yields
    \begin{align*}
        2t^{\rho} \bigl\| \Dot{z}(t)\bigr\| \leq &\: 2\lambda_{r, 1} t^{\rho - r} \| z(t) - z_{*}\| + \theta t^{\rho + r} \beta(t) \| V(z(t))\| \\
        &+ \Bigl\| 2\lambda_{r, 1} t^{\rho - r}(z(t) - z_{*}) + 2t^{\rho} \Dot{z}(t) + \theta t^{\rho + r} \beta(t) V(z(t))\Bigr\| \\
        \leq &\: \sqrt{\frac{2 \mathcal{E}_{\lambda_{r, 1}}(T)}{ \bigl(\alpha - \xi - \lambda_{r, 1}\bigr)}} + 2\sqrt{2 \mathcal{E}_{\lambda_{r, 1}}(T)}.
    \end{align*}
    The previous inequality, together with \eqref{eq: rates for <z(t) - z*, V(z(t))>, ||V(z(t))||} and the fact that $\rho\in (0, r)$ was arbitrary, tells us that so far we have, as $t\to +\infty$,
    \begin{equation}\label{eq:to change rho by r}
        \| V(z(t))\| = o\left(\frac{1}{t^{\rho + r} \beta(t)}\right), \quad \langle z(t) - z_{*}, V(z(t))\rangle = o\left(\frac{1}{t^{2\rho}\beta(t)}\right), \quad \bigl\| \Dot{z}(t)\bigr\| = o\left(\frac{1}{t^{\rho}}\right). 
    \end{equation}

    \fbox{Case $r=0$ or $r=1$:} going back to \eqref{eq:energy function r=s line 2}, we have, for $t\geq T$,
    \[
        \| z(t) - z_{*}\| \leq \sqrt{\frac{\mathcal{E}_{\lambda_{0, 1}}(T)}{2\lambda_{0, 1}(\alpha - \lambda_{0, 1})}} \ \ (\mbox{when} \ r = 0) \ \ \text{or} \ \ \| z(t) - z_{*}\| \leq \sqrt{\frac{\mathcal{E}_{\lambda_{1, 1}}(T)}{2\lambda_{1, 1}(\alpha - 1 - \lambda_{1, 1})}} \ \ (\mbox{when} \ r = 1),
    \]
    which gives the boundedness of $t\mapsto z(t)$. Using the boundedness of $\mathcal{E}_{\lambda_{0, 1}}$ (when $r = 0$) and that of $\mathcal{E}_{\lambda_{1, 1}}$ (when $r = 1$) and arguing exactly as in the case $r\in (0, 1)$, we obtain
    \begin{equation}\label{eq:to change O by o}
        \| V(z(t))\| =  \mathcal{O}\left(\frac{1}{t^{2r} \beta(t)}\right), \quad \langle z(t) - z_{*}, V(z(t))\rangle = \mathcal{O}\left(\frac{1}{t^{2r}\beta(t)}\right), \quad \bigl\| \Dot{z}(t)\bigr\| = \mathcal{O}\left(\frac{1}{t^{r}}\right). 
    \end{equation}
    as $t\to +\infty$ for $r = 0$ and $r = 1$. 

\paragraph{Weak convergence of the trajectories.}
    In the following steps, we will assume that $t\mapsto z(t)$ is bounded when $r\in (0, 1)$. This will allow us to set $\rho = r$ in \eqref{eq:to change rho by r} and to show weak convergence of the trajectories towards a zero of $V$. Additionally, we will improve $\mathcal{O}$ to $o$ in \eqref{eq:to change O by o} and we will again show weak convergence of the trajectories towards a zero of $V$. 

    Taking $\rho \in (0, r)$ when $r\in (0, 1)$ is done only in order to obtain that the coefficient accompanying $\| z(t) - z_{*}\|^{2}$ eventually becomes nonpositive, therefore producing $\frac{d}{dt}\mathcal{E}_{\lambda}(t) \leq 0$ (recall line \eqref{eq:d energy function r=s line 1}). Setting $\rho = r$ creates no other issues otherwise, but changes the bound for $\frac{d}{dt}\mathcal{E}_{\lambda}(t)$: indeed, going back to \eqref{eq:d energy function before changing rho for r} and plugging $\rho = r$ gives, for $t\geq T$ and $i = 1, 2$,  
    \begin{align}
        \frac{d}{dt}\mathcal{E}_{\lambda_{r, i}}(t) = &\: 2\lambda_{r, i} r(1 - r) t^{r - 2}\| z(t) - z_{*}\|^{2} \nonumber\\
        &+ 4\lambda_{r, i} t^{r} \Bigl[ \bigl(2 r \theta t^{r - 1} - 1\bigr)\beta(t) + \theta t^{r} \Dot{\beta}(t)\Bigr] \langle z(t) - z_{*}, V(z(t))\rangle \nonumber\\
        &+ t^{r}\big(\lambda_{r, i} + r t^{r - 1} - \alpha\bigr)\bigl\| \Dot{z}(t)\bigr\|^{2} \nonumber\\
        &+ \frac{2}{3}\theta t^{3r}\beta(t) \Bigl[ \bigl(2 r \theta t^{r - 1} - 1\bigr)\beta(t) + \theta t^{r} \Dot{\beta}(t)\Bigr] \| V(z(t))\|^{2} \nonumber\\
        \leq &\: 2\lambda_{r, i} r(1 - r) t^{r - 2}\| z(t) - z_{*}\|^{2}. \label{eq:d energy function when rho = r}
    \end{align}
    We have already established the boundedness of $t\mapsto z(t)$ when $r = 0$ or $r = 1$, and we will assume it when $r\in (0, 1)$. Say that $\| z(t) - z_{*}\|^{2} \leq M$ for all $t\geq T$ for some $M\geq 0$. According to \eqref{eq:d energy function when rho = r}, for $t\geq T$ we now have 
    \[
        \frac{d}{dt}\mathcal{E}_{\lambda_{r, i}}(t) \leq 2\lambda_{r, i} r(1 - r) M t^{r - 2} 
    \]
    and therefore 
    \[
        \frac{d}{dt}\left[ \mathcal{E}_{\lambda_{r, i}}(t) + 2\lambda_{r, i} r M t^{r - 1}\right] = \frac{d}{dt}\mathcal{E}_{\lambda_{r, i}}(t) + 2\lambda_{r, i} r (r - 1) M t^{r - 2} \leq 0.
    \]
    As a consequence of this, $t\mapsto \mathcal{E}_{\lambda_{r, i}}(t) + 2\lambda_{r, i} r Mt^{r - 1}$ is nonnegative and monotonically decreasing on $[T, +\infty)$, which means it has a limit as $t\to +\infty$. Since $2\lambda_{r, i} r (1 - r) M t^{r - 1}$ also has a limit as $t\to +\infty$, we come to the following: for $\rho = r$, we have
    \begin{align}
        0 \leq \mathcal{E}_{\lambda_{r, i}}(t) \leq \mathcal{E}_{\lambda_{r, i}}(T) + 2\lambda_{r, i} r (1 - r) M T^{r - 1} \text{ for }t\geq T\text{, }r\in [0, 1]\text{ and }i=1, 2 ;& \label{eq: energy function bounded when rho=r}\\
        \lim_{t\to +\infty}\mathcal{E}_{\lambda_{r, i}}(t) \text{ exists for }r\in [0, 1]\text{ and }i = 1, 2.& \label{eq:limit of continuous E when rho=r exists}
    \end{align}
    By going back to lines \eqref{eq:energy function r=s line 1}-\eqref{eq:energy function r=s line 4}, setting $\rho = r$ and reasoning exactly as we did before, we arrive at 
    \begin{equation}\label{eq:to change O by o 2}
        \| V(z(t))\| =  \mathcal{O}\left(\frac{1}{t^{2r} \beta(t)}\right), \quad \langle z(t) - z_{*}, V(z(t))\rangle = \mathcal{O}\left(\frac{1}{t^{2r}\beta(t)}\right), \quad \bigl\| \Dot{z}(t)\bigr\| = \mathcal{O}\left(\frac{1}{t^{r}}\right). 
    \end{equation}
    as $t\to +\infty$, where these rates now hold for all $r\in [0, 1]$.

    For $r\in [0, 1]$, we proceed to show three integrability results that we will need later. Recall \eqref{eq:growth condition delta}: for $t\geq T$, we have
    \begin{align*}
        \frac{t^{r} \Dot{\beta(t)}}{\beta(t)} \leq \frac{1}{\theta} - 2r t^{r - 1} - \delta \:\: &\Rightarrow \:\: \theta t^{r} \Dot{\beta}(t) \leq \bigl(1 - 2r\theta t^{r - 1}\bigr) \beta(t) - \delta\theta \beta(t) \\
        &\Rightarrow \:\: \delta \theta \beta(t) \leq (1 - 2 r \theta t^{r - 1}) \beta(t) - \theta t^{r} \Dot{\beta}(t).
    \end{align*}
    Going back to \eqref{eq:d energy function when rho = r}, we choose $i = 1$ and integrate the inequality from $T$ to $t\geq T$:
    \begin{align*}
        &\: \int_{T}^{t}s^{r} \bigl(\alpha - r s^{r - 1} - \lambda_{r, 1}\bigr) \bigl\| \Dot{z}(s)\bigr\|^{2} ds + 4\lambda_{r, 1} \int_{T}^{t} s^{r} \beta(s) \langle z(s) - z_{*}, V(z(s))\rangle ds \\
        & +\frac{2}{3}\delta\theta^{2} \int_{t_{5}}^{t}s^{3r} \beta^{2}(s) \| V(z(s))\|^{2} ds \\
        \leq & \: \int_{T}^{t} s^{r} \bigl(\alpha - r s^{r - 1} - \lambda_{r, 1}\bigr) \bigl\| \Dot{z}(s)\bigr\|^{2} ds \\
        & +4\lambda_{r, 1} \delta\theta \int_{T}^{t} s^{r} \Bigl[ \bigl(1 - 2 r\theta s^{r - 1}\bigr)\beta(s) - \theta s^{r} \Dot{\beta}(s)\Bigr] \langle z(s) - z_{*}, V(z(s))\rangle ds \\
        &+ \frac{2}{3} \theta \int_{T}^{t} s^{3} \beta(s) \Bigl[ \bigl(1 - 2 r\theta s^{r - 1}\bigr)\beta(s) - \theta s^{r} \Dot{\beta}(s)\Bigr] \| V(z(s))\|^{2} ds \\
        \leq & \: -\int_{T}^{t} \frac{d}{ds}\mathcal{E}_{\lambda_{r, 1}}(s) ds + 2\lambda_{r, 1} r(1 - r) \int_{t_{5}}^{t} s^{r - 2} \|z(s) - z_{*}\|^{2} ds \\
        \leq &\: \mathcal{E}_{\lambda_{r, 1}}(T) + 2\lambda_{r, 1} r(1 - r) \int_{T}^{t} s^{r - 2} \|z(s) - z_{*}\|^{2} ds, 
    \end{align*}
    where in the last inequality we drop the nonpositive term $- \mathcal{E}_{\lambda_{r, 1}}(t)$. The second summand in the last line vanishes in the cases $r = 0$ and $r = 1$, and in the case $r\in (0, 1)$ we are assuming the boundedness of the trajectory in order for this integral to be finite as $t\to +\infty$. This produces the following statements for $r\in [0, 1]$:
    \begin{align}
        \int_{T}^{+\infty} t^{r} \bigl\| \Dot{z}(t)\bigr\|^{2} dt &< +\infty, \label{eq: int 1} \\
        \int_{T}^{+\infty} t^{r} \beta(t) \langle z(t) - z_{*}, V(z(t))\rangle dt &< +\infty, \label{eq: int 2}\\
        \int_{T}^{+\infty} t^{3r} \beta^{2}(t) \| V(z(t))\|^{2} dt &< +\infty. \label{eq: int 3}
    \end{align}

    Now, we proceed to show the $o$ rates. According to the formula for the energy function, for $\rho = r$, $r\in [0, 1]$ and $t\geq T$ we have  
    \begin{align}
        \mathcal{E}_{\lambda_{r, 2}}(t) - \mathcal{E}_{\lambda_{r, 1}}(t) = & \: \frac{1}{2}\Biggl[ \Bigl\|2\lambda_{r, 2} (z(t) - z_{*}) + t^{r}\Bigl[ 2\Dot{z}(t) + \theta t^{r} \beta(t) V(z(t))\Bigr]\Bigl\|^{2} \nonumber\\
        &\quad -\Bigl\|2\lambda_{r, 1} (z(t) - z_{*}) + t^{r}\Bigl[ 2\Dot{z}(t) + \theta t^{r} \beta(t) V(z(t))\Bigr]\Bigl\|^{2}\Biggr] \nonumber\\
        & +2\Bigl[ \lambda_{r, 2} \bigl( \alpha - r t^{r - 1} - \lambda_{r, 2}\bigr) - \lambda_{r, 1} \bigl( \alpha - r t^{r - 1} - \lambda_{r, 1}\bigr)\Bigr] \| z(t) - z_{*}\|^{2} \nonumber\\
        &+ 2(\lambda_{r, 2} - \lambda_{r, 1}) \theta t^{2r} \beta(t) \langle z(t) - z_{*}, V(z(t))\rangle \nonumber\\
        = &\: \frac{1}{2}\Biggl[ 4 \bigl(\lambda_{r, 2}^{2} - \lambda_{r, 1}^{2}\bigr)\| z(t) - z_{*}\|^{2} + 4(\lambda_{r, 2} - \lambda_{r, 1}) \bigl\langle z(t) - z_{*}, 2\Dot{z}(t) + \theta t^{r} \beta(t) v(z(t))\bigr\rangle\Biggr] \nonumber\\
        &+ \Bigl[ 2(\lambda_{r, 2} - \lambda_{r, 1})\bigl( \alpha - r t^{r - 1}\bigr) - 2\bigl( \lambda_{r, 2}^{2} - \lambda_{r, 1}^{2}\bigr)\Bigr] \| z(t) - z_{*}\|^{2} \nonumber\\
        &+ 2(\lambda_{r, 2} - \lambda_{r, 1}) \theta t^{2r} \beta(t) \langle z(t) - z_{*}, V(z(t))\rangle \nonumber\\
        = & \: 4(\lambda_{r, 2} - \lambda_{r, 1}) \left[ \frac{1}{2}\bigl(\alpha - r t^{r - 1}\bigr) \| z(t) - z_{*}\|^{2} + t^{r} \bigl\langle z(t) - z_{*}, \Dot{z}(t) + \theta t^{r} \beta(t) V(z(t))\bigr\rangle\right]. \label{eq: difference of energy functions}
    \end{align}
    Define, for $t\geq T$, 
    \begin{equation}\label{eq: formula for p}
        p(t) := \frac{1}{2}\bigl(\alpha - rt^{r - 1}\bigr)\|z(t) - z_{*}\|^{2} + t^{r} \bigl\langle z(t) - z_{*}, \Dot{z}(t) + \theta t^{r} \beta(t) V(z(t))\bigr\rangle. 
    \end{equation}
    Since we already established that $\lim_{t\to +\infty} \mathcal{E}_{\lambda_{r, i}}$ exists for $i=1, 2$, and $\lambda_{r, 2} - \lambda_{r, 1}\neq 0$, we obtain the existence of $\lim_{t\to +\infty}p(t)$. We may express $\mathcal{E}_{\lambda_{r, 1}}(t)$ as 
    \begin{align*}
        \mathcal{E}_{\lambda_{r, 1}}(t) = & \: \frac{1}{2} \Bigl\| 2\lambda_{r, 1} (z(t) - z_{*}) + t^{r} \Bigl[ 2\Dot{z}(t) + \theta t^{r} \beta(t) V(z(t))\Bigr]\Bigr\|^{2} \\
        &+ 2\lambda_{r, 1} \bigl(\alpha - r t^{r - 1} - \lambda_{r, 1}\bigr) \|z(t) - z_{*}\|^{2} \\
        &+ 2\lambda_{r, 1}\theta t^{2r} \beta(t) \langle z(t) - z_{*}, V(z(t))\rangle + \frac{\theta^{2}}{2} t^{4r} \beta^{2}(t) \| V(z(t))\|^{2} \\
        = &\: \frac{1}{2}\Biggl[ 4\lambda_{r, 1}^{2} \|z(t) - z_{*}\|^{2} + 2\lambda_{r, 1} t^{r}\bigl\langle z(t) - z_{*}, 2\Dot{z}(t) + \theta t^{r} \beta(t) V(z(t))\bigr\rangle \\
        &\quad\quad + t^{2r}\Bigl\| 2\Dot{z}(t) + \theta t^{r} \beta(t) V(z(t))\Bigr\|^{2}\Biggr] \\
        &+ 2\lambda_{r, 1} \bigl( \alpha - rt^{r - 1}\bigr) \| z(t) - z_{*}\|^{2} + 2\lambda_{r, 1}^{2} \| z(t) - z_{*}\|^{2} + 2\lambda_{r, 1}\theta t^{2r} \beta(t) \langle z(t) - z_{*}, V(z(t))\rangle \\
        &+ \frac{\theta^{2}}{2} t^{4r} \beta^{2}(t) \| V(z(t))\|^{2} \\
        = & \: 4\lambda_{r, 1} p(t) + \frac{t^{2r}}{2} \Bigl\| 2\Dot{z}(t) + \theta t^{r} \beta(t) V(z(t))\Bigr\|^{2} + \frac{\theta^{2}}{2} t^{4r} \beta^{2}(t) \| V(z(t))\|^{2} \\
        = &\: 4\lambda_{r, 1} p(t) + t^{2r} \Bigl\| \Dot{z}(t) + \theta t^{r} \beta(t) V(z(t))\Bigr\|^{2} + t^{2r} \bigl\| \Dot{z}(t)\bigr\|^{2}. 
    \end{align*}
    Define $h(t) := t^{2r} \bigl\| \Dot{z}(t) + \theta t^{r} \beta(t)V(z(t))\bigr\|^{2} + t^{2r} \bigl\| \Dot{z}(t)\bigr\|^{2}$. Since both $\lim_{t\to +\infty} \mathcal{E}_{\lambda_{r, 1}}(t)$ and \\$\lim_{t\to +\infty} p(t)$ exists, so does $\lim_{t\to +\infty} h(t)$. Observe that
    \[
        \int_{T}^{t} \frac{1}{s^{r}} h(s) ds \leq 2\int_{T}^{t} s^{r} \bigl\| \Dot{z}(s)\bigr\|^{2} ds + 2\theta^{2} \int_{T}^{t} s^{3r} \beta^{2}(s) \| V(z(s))\|^{2} ds, 
    \]
    and, according to \eqref{eq: int 1} and \eqref{eq: int 3}, the integrals on the right-hand side become finite as $t\to +\infty$. Therefore, $\int_{T}^{+\infty} \frac{1}{t^{r}}h(t) dt < +\infty$, which gives
    \[
        \lim_{t\to +\infty} h(t) = 0, 
    \]
    which in particular yields
    \[
        \lim_{t\to +\infty} t^{2r}\bigl\| \Dot{z}(t)\bigr\|^{2} = 0, \text{ equivalently } \bigl\| \Dot{z}(t)\bigr\| = o\left(\frac{1}{t^{r}}\right) \text{ as }t\to +\infty.
    \]
    and plugging this into $\lim_{t\to +\infty} t^{2r}\bigl\| \Dot{z}(t) + \theta t^{r} \beta(t) V(z(t))\bigr\|^{2} = 0$ produces
    \[
        \lim_{t\to +\infty} t^{4r} \beta^{2}(t) \| V(z(t))\|^{2} = 0, \text{ equivalently } \| V(z(t))\| = o\left(\frac{1}{t^{2r} \beta(t)}\right) \text{ as }t\to +\infty. 
    \]
    Using the previous result together with the boundedness of $\| z(t) - z_{*}\|$ and the Cauchy-Schwarz inequality yields 
    \[
        \langle z(t) - z_{*}, V(z(t))\rangle = o\left( \frac{1}{t^{2r} \beta(t)}\right) \text{ as }t\to +\infty. 
    \]
    as $t\to +\infty$. We have now shown statements (i) and (ii) of the theorem. 

    We now focus on the weak convergence of the trajectories. For this end, we will make use of Opial's lemma (see Lemma \ref{lem:Opial}). Define, for $t\geq T$, 
    \[
        q(t) := \frac{1}{2} \| z(t) - z_{*}\|^{2} + \theta \int_{T}^{t} s^{r} \beta(s) \langle z(s) - z_{*}, V(z(s))\rangle ds. 
    \]
    We distinguish between the cases $r\in [0, 1)$ and $r = 1$.
    
    \fbox{Case $r \in [0, 1)$:} recalling formula \eqref{eq: formula for p}, notice that 
    \begin{align*}
        & \:\alpha q(t) + t^{r} \Dot{q}(t) \\
        = & \: \frac{\alpha}{2} \| z(t) - z_{*}\|^{2} + t^{r} \bigl\langle z(t) - z_{*}, \Dot{z}(t) + \theta t^{r} \beta(t) V(z(t))\bigr\rangle + \theta \alpha \int_{T}^{t} s^{r} \beta(s) \langle z(s) - z_{*}, V(z(s))\rangle ds \\
        = & \: \frac{r}{2} t^{r - 1} \| z(t) - z_{*}\|^{2} + p(t) + \theta\alpha \int_{T}^{t} s^{r} \beta(s) \langle z(s) - z_{*}, V(z(s))\rangle ds.
    \end{align*}
    Because of \eqref{eq: int 2}, the integral in the previous sum converges as $t\to +\infty$. We have already established the existence of $\lim_{t\to +\infty} p(t)$, and using that $t^{r - 1} \to 0$ as $t\to +\infty$ and the boundedness of $\|z(t) - z_{*}\|$ allows us to deduce the existence of $\lim_{t\to +\infty} \alpha q(t) + t^{r} \Dot{q}(t)$. Using Lemma \ref{lem:q}, we obtain the existence of $\lim_{t\to +\infty} q(t)$, and again using that $\int_{T}^{t} s^{r} \beta(s) \langle z(s) - z_{*}, V(z(s))\rangle ds$ has a limit as $t\to +\infty$, we come to the existence of 
    \[
        \lim_{t \to +\infty} \| z(t) - z_{*}\|.
    \] 

    \fbox{Case $r = 1$:} we repeat the reasoning of the previous case. We obtain the existence of \\
    $\lim_{t\to +\infty} (\alpha - 1)q(t) + t \Dot{q}(t)$, and once again we use Lemma \ref{lem:q} to produce the existence of $\lim_{t\to +\infty} \| z(t) - z_{*}\|$. 

    In both cases, we have verified the first condition of Opial's Lemma. For the second condition, let $\overline{z}$ be a sequential cluster point of the trajectory $z(t)$ as $t\to +\infty$, which means there exists a sequence $(t_{n})_{n\in\N} \subseteq [t_{0}, +\infty)$ such that $t_{n} \to +\infty$ as $n\to +\infty$ and  
    \[
        z(t_{n}) \rightharpoonup \overline{z} \quad \text{as} \quad n\to +\infty,
    \]
    where $\rightharpoonup$ denotes weak convergence in $\mathcal{H}$. The convergence rate attained for $\| V(z(t))\|$ ensures $V(z(t_{n})) \to 0$ as $n\to +\infty$. Since the graph of $V$ is sequentially closed in $\mathcal{H}^{\text{weak}} \times \mathcal{H}^{\text{strong}}$, this finally allows us to conclude that 
    \[
        V(\overline{z}) = 0, 
    \]
    thereby fulfilling the second condition of Opial's Lemma. We have thus concluded the proof of this theorem. 
\end{proof}

\subsection{Linearly constrained convex minimization as a particular case}
Recall the problem stated in the introduction
\begin{equation}\label{eq:constrained minimization 2}
    \begin{array}{rl}
		\min & f \left( x \right), \\
		\text{subject to} 	& Ax = b, 
	\end{array}
\end{equation}
where $\mathcal{X}$ and $\mathcal{Y}$ are real Hilbert spaces, $b\in \mathcal{Y}$, $A :\mathcal{X} \to \mathcal{Y}$ is a bounded linear operator and $f : \mathcal{X} \to \R$ is a convex and continuously differentiable function. Formulating \eqref{eq:system r=s in (0, 1)} in terms of the operator $V$ given by 
\begin{equation}\label{eq:operator linearly constrained minimization}
    V(x, \lambda) = \Bigl(\nabla f(x) + A^{*}\lambda, b - Ax\Bigr)
\end{equation}
yields the primal-dual system 
\begin{equation}\label{eq:minimization primal-dual system}
    \begin{dcases}
        \Ddot{x}(t) + \frac{\alpha}{t^{r}}\Dot{x}(t) + \theta t^{r} \beta(t)\frac{d}{dt}\nabla f(x(t)) + \beta(t) A^{*} \bigl(\lambda(t) + \theta t^{r}\Dot{\lambda}(t)\bigr) + \beta(t)\nabla f(x(t)) &= 0, \\
        \Ddot{\lambda}(t) + \frac{\alpha}{t^{r}}\Dot{\lambda}(t) - \beta(t)\Bigl[ A\bigl(\Dot{x}(t) + \theta t^{r} \Dot{x}(t)\bigr) - b\Bigr] &= 0. 
    \end{dcases}
\end{equation}
Let $(x_{*}, \lambda_{*}) \in \mathcal{X} \times \mathcal{Y}$ be a primal-dual solution to \eqref{eq:constrained minimization 2}. Equivalently, $(x_{*}, \lambda_{*})$ is zero of $V$, or a saddle point of the Lagrangian $\mathcal{L}(x, \lambda) = f(x) + \langle \lambda, Ax - b\rangle$. Let $t\mapsto (x(t), \lambda(t))$ be a solution to \eqref{eq:minimization primal-dual system}. Using the gradient inequality for convex functions, we have, for every $t\geq t_{0}$, 
\begin{align}
    0 &\leq \mathcal{L}(x(t), \lambda_{*}) - \mathcal{L}(x_{*}, \lambda(t)) \nonumber\\
    &= f(x(t)) - f(x_{*}) + \langle \lambda_{*}, Ax(t) - b\rangle \nonumber \\
    &\leq \langle x(t) - x_{*}, \nabla f(x(t))\rangle + \langle \lambda_{*}, Ax(t) - b\rangle \nonumber \\
    &= \langle x(t) - x_{*}, \nabla f(x(t))\rangle + \langle x(t) - x_{*}, A^{*}\lambda(t)\rangle + \langle \lambda(t) - \lambda_{*}, b - Ax(t)\rangle \nonumber \\
    &= \Bigl\langle (x(t), \lambda(t)) - (x_{*}, \lambda_{*}), V(x(t), \lambda(t))\Bigr\rangle. \label{eq:ineq for primal-dual gap}
\end{align}
By exploiting the results in Theorem \ref{eq:main theorem continuous}, and by taking into consideration additional potential information in this particular case, we come to obtain the following result. 
\begin{theorem}
    Suppose that $\alpha, \theta > 0$, $r\in [0, 1]$ and that $\beta : [t_{0}, +\infty) \to (0, +\infty)$ satisfy the same assumptions as in Theorem \ref{eq:main theorem continuous}. Let $(x_{*}, \lambda_{*}) \in \mathcal{X}\times \mathcal{Y}$ be a primal-dual solution to \eqref{eq:constrained minimization 2} and $t\mapsto (x(t), \lambda(t))$ a solution to \eqref{eq:minimization primal-dual system}. Consider the following convergence rates as $t\to +\infty$:
    \begin{gather*}
        \mathcal{L}(x(t), \lambda_{*}) - \mathcal{L}(x_{*}, \lambda(t)) = o\left(\frac{1}{t^{2\rho} \beta(t)}\right), \quad |f(x(t)) - f(x_{*})| = o\left(\frac{1}{t^{2\rho} \beta(t)}\right), \\
        \| Ax(t) - b\| = o\left(\frac{1}{t^{\rho + r} \beta(t)}\right), \quad \bigl\| \Dot{x}(t)\bigr\| = o\left(\frac{1}{t^{\rho}}\right), \quad \bigl\| \Dot{\lambda}(t)\bigr\| = o\left(\frac{1}{t^{\rho}}\right).
    \end{gather*}
    Additionally, if $\nabla f$ is $L$-Lipschitz continuous for some $L > 0$, consider the following rates as $t\to +\infty$:
    \[
        \| \nabla f(x(t)) - \nabla f(x_{*})\| = o\left( \frac{1}{t^{\rho} \sqrt{\beta(t)}}\right), \quad \| A^{*}(\lambda(t) - \lambda_{*})\| = o\left(\frac{1}{t^{\rho} \sqrt{\beta(t)}}\right).
    \]
    The following statements are true:    
    \begin{enumerate}[\rm (i)]
        \item 
        \label{thm:minimization primal-dual:i}
        If $r \in (0, 1)$, the above rates hold for $\rho \in (0, r)$. Furthermore, if $t\mapsto (x(t), \lambda(t))$ is bounded, then these rates hold for $\rho = r$. 
        \item 
        \label{thm:minimization primal-dual:ii}
        if $r = 0$ or $r = 1$, then $t\mapsto (x(t), \lambda(t))$ is bounded; moreover, the above rates hold for $\rho = 0$ (if $r = 0$) and for $\rho = 1$ (if $r = 1$). 
        \item 
        \label{thm:minimization primal-dual:iii}
        If $r = 0$ or $r = 1$ or $\Bigl(r\in (0, 1)$ and $t\mapsto (x(t), \lambda(t))$ is bounded $\Bigr)$, then $(x(t), \lambda(t))$ converges weakly to a primal-dual solution to \eqref{eq:constrained minimization 2} as $t\to +\infty$. 
    \end{enumerate}
\end{theorem}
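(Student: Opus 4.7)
The plan is to translate the results of Theorem \ref{eq:main theorem continuous} applied to $z(t) = (x(t), \lambda(t))$ and $V$ as in \eqref{eq:operator linearly constrained minimization} into statements about the primal-dual quantities, then handle the additional items (objective gap, gradient rate, and the rate for $A^{*}(\lambda(t) - \lambda_{*})$) via identities specific to the Lagrangian. Since the components of $V(z(t))$ are $\nabla f(x(t)) + A^{*}\lambda(t)$ and $b - Ax(t)$, and the components of $\Dot{z}(t)$ are $\Dot{x}(t)$ and $\Dot{\lambda}(t)$, the rates $\| V(z(t))\| = o\bigl(1/(t^{\rho+r}\beta(t))\bigr)$ and $\|\Dot{z}(t)\| = o(1/t^{\rho})$ from Theorem \ref{eq:main theorem continuous} immediately give $\|Ax(t) - b\| = o\bigl(1/(t^{\rho+r}\beta(t))\bigr)$, $\|\Dot{x}(t)\| = o(1/t^{\rho})$ and $\|\Dot{\lambda}(t)\| = o(1/t^{\rho})$. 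Combining inequality \eqref{eq:ineq for primal-dual gap} with the rate $\langle z(t) - z_{*}, V(z(t))\rangle = o\bigl(1/(t^{2\rho}\beta(t))\bigr)$ yields the stated primal-dual gap rate.

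For the objective gap, the key identity is $\mathcal{L}(x_{*}, \lambda(t)) = f(x_{*})$ (using $Ax_{*} = b$), which rewrites the Lagrangian gap as $f(x(t)) - f(x_{*}) + \langle \lambda_{*}, Ax(t) - b\rangle$. The upper bound on $f(x(t)) - f(x_{*})$ is then the sum of the Lagrangian gap and $\|\lambda_{*}\|\cdot\|Ax(t) - b\|$, both already controlled. For the lower bound, the saddle-point inequality $\mathcal{L}(x_{*}, \lambda_{*}) \leq \mathcal{L}(x(t), \lambda_{*})$ gives $f(x(t)) - f(x_{*}) \geq -\|\lambda_{*}\|\cdot\|Ax(t) - b\|$. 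Because $\rho \leq r$ in every case considered, we have $2\rho \leq \rho + r$, so $1/(t^{2\rho}\beta(t))$ dominates, yielding $|f(x(t)) - f(x_{*})| = o\bigl(1/(t^{2\rho}\beta(t))\bigr)$.

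The Lipschitz-based estimates rely on the Baillon--Haddad theorem: convexity of $f$ together with $L$-Lipschitz continuity of $\nabla f$ is equivalent to $(1/L)$-cocoercivity, i.e., $\|\nabla f(x) - \nabla f(x_{*})\|^{2} \leq L\langle x - x_{*}, \nabla f(x) - \nabla f(x_{*})\rangle$. The main technical point I expect is the clean cancellation
\[
\langle x(t) - x_{*}, \nabla f(x(t)) - \nabla f(x_{*})\rangle = \langle z(t) - z_{*}, V(z(t))\rangle,
\]
obtained by expanding the right-hand side and using the optimality conditions $\nabla f(x_{*}) + A^{*}\lambda_{*} = 0$ and $Ax_{*} = b$. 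Combined with cocoercivity, this directly yields $\|\nabla f(x(t)) - \nabla f(x_{*})\| = o\bigl(1/(t^{\rho}\sqrt{\beta(t)})\bigr)$. The bound on $\|A^{*}(\lambda(t) - \lambda_{*})\|$ then follows from the triangle inequality applied to the decomposition $A^{*}(\lambda(t) - \lambda_{*}) = \bigl[\nabla f(x(t)) + A^{*}\lambda(t)\bigr] - \bigl[\nabla f(x(t)) - \nabla f(x_{*})\bigr]$, controlling the first bracket by $\|V(z(t))\|$ and the second by what we just proved, yielding again $o\bigl(1/(t^{\rho}\sqrt{\beta(t)})\bigr)$ as the slower of the two rates.

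Finally, the boundedness/weak-convergence statements (ii) and (iii) are inherited from the corresponding statements of Theorem \ref{eq:main theorem continuous} applied to $z(t) = (x(t), \lambda(t))$: weak convergence of $z(t)$ to some zero of $V$ is precisely weak convergence of $(x(t), \lambda(t))$ to a primal-dual solution of \eqref{eq:constrained minimization 2}. The only delicate point to verify in case (i) is that assuming boundedness of $(x(t), \lambda(t))$ is what enables the strengthening to $\rho = r$; this is exactly how the hypothesis is used in Theorem \ref{eq:main theorem continuous}\,(i), so no additional argument is required.
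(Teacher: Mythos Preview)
Your proof is correct and follows essentially the same approach as the paper: apply Theorem \ref{eq:main theorem continuous} to $z(t)=(x(t),\lambda(t))$ with $V$ as in \eqref{eq:operator linearly constrained minimization}, then extract each primal-dual rate via the Lagrangian identities and \eqref{eq:ineq for primal-dual gap}. Your handling of the Lipschitz case via Baillon--Haddad cocoercivity together with the clean identity $\langle x(t)-x_{*},\nabla f(x(t))-\nabla f(x_{*})\rangle = \langle z(t)-z_{*},V(z(t))\rangle$ is a slightly more direct variant than the paper's, which instead uses the descent-lemma form $\tfrac{1}{2L}\|\nabla f(x(t))-\nabla f(x_{*})\|^{2}\leq \langle x(t)-x_{*},\nabla f(x(t))\rangle - \bigl(f(x(t))-f(x_{*})\bigr)$ and then feeds in the already-established rate for $|f(x(t))-f(x_{*})|$.
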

\begin{proof}
    Parts \ref{thm:minimization primal-dual:i} and \ref{thm:minimization primal-dual:ii} are a direct consequence of Theorem \ref{eq:main theorem continuous} applied to the operator $V$ defined in \eqref{eq:operator linearly constrained minimization}. Indeed, endow $\mathcal{X} \times \mathcal{Y}$ with the norm $\|(x, \lambda)\| := \|x\| + \|\lambda\|$. First, from \eqref{eq:convergence rates of the main theorem} we obtain
    \[
        \bigl\| \Dot{x}(t)\bigr\| + \bigl\| \Dot{\lambda}(t)\bigr\| = \bigl\| \bigl(\Dot{x}(t), \Dot{\lambda}(t)\bigr)\bigr\| = o\left(\frac{1}{t^{\rho}}\right) \quad \text{as }t\to +\infty.
    \]
    From \eqref{eq:ineq for primal-dual gap} and \eqref{eq:convergence rates of the main theorem} we get
    \begin{equation}\label{eq:convergence rate primal-dual gap}
        0 \leq \mathcal{L}(x(t), \lambda_{*}) - \mathcal{L}(x_{*}, \lambda(t)) \leq \Bigl\langle (x(t), \lambda(t)) - (x_{*}, \lambda_{*}), V(x(t), \lambda(t))\Bigr\rangle = o\left(\frac{1}{t^{2\rho}\beta(t)}\right) \quad \text{as }t\to +\infty.
    \end{equation}
    Again using \eqref{eq:convergence rates of the main theorem}, we produce
    \begin{equation}\label{eq:rate norm primal-dual operator}
        \| \nabla f(x(t)) + A^{*}\lambda(t)\| + \| Ax(t) - b\| = \| V(x(t), \lambda(t))\| = o\left(\frac{1}{t^{\rho + r}\beta(t)}\right) 
    \end{equation}
    and thus 
    \begin{equation}\label{eq:feasibility gap}
        \| Ax(t) - b\| = o\left(\frac{1}{t^{\rho + r}\beta(t)}\right) \quad \text{as }t\to +\infty.
    \end{equation}
    Combining \eqref{eq:convergence rate primal-dual gap} and \eqref{eq:feasibility gap} we come to 
    \begin{align}
        |f(x(t)) - f(x_{*})| &\leq |f(x(t)) - f(x_{*}) + \langle \lambda_{*}, Ax(t) - b\rangle| + |\langle \lambda_{*}, Ax(t) - b\rangle| \nonumber\\
        &\leq \mathcal{L}(x(t), \lambda_{*}) - \mathcal{L}(x_{*}, \lambda(t)) + \|\lambda_{*}\| \| Ax(t) - b\| \nonumber\\
        &= o\left(\frac{1}{t^{2\rho}\beta(t)}\right) \quad \text{as }t\to +\infty, \label{eq:convergence rate for functional values}
    \end{align}
    since we always have $2\rho \leq \rho + r$. Assume now that $\nabla f$ is $L$-Lipschitz continuous. We have now a stronger gradient inequality
    \begin{equation}\label{eq:strong gradient inequality}
        0 \leq \frac{1}{2L} \| \nabla f(x(t)) - \nabla f(x_{*})\|^{2} \leq \langle x(t) - x_{*}, \nabla f(x(t))\rangle -  \bigl(f(x(t)) - f(x_{*}) \bigr).
    \end{equation}
    We will show that $\langle x(t) - x_{*}, \nabla f(x(t))\rangle = o\left(\frac{1}{t^{2\rho} \beta(t)}\right)$ as $t\to +\infty$. Indeed, we have 
    \begin{align*}
        &\:\langle x(t) - x_{*}, \nabla f(x(t))\rangle + \langle \lambda_{*}, Ax(t) - b\rangle \\
        = &\: \langle x(t) - x_{*}, \nabla f(x(t))\rangle + \langle x(t) - x_{*}, A^{*}\lambda(t)\rangle + \langle \lambda(t) - \lambda_{*}, b - Ax(t)\rangle \\
        = &\: \Bigl\langle (x(t), \lambda(t)) - (x_{*}, \lambda_{*}), V(x(t), \lambda(t))\Bigr\rangle  \\
        = &\: o\left(\frac{1}{t^{2\rho} \beta(t)}\right) \quad \text{as }t\to +\infty, 
    \end{align*}
    so using this together with \eqref{eq:feasibility gap} yields $\langle x(t) - x_{*}, \nabla f(x(t))\rangle = o\left(\frac{1}{t^{2\rho} \beta(t)}\right)$ as $t\to +\infty$ (again, recall that $2\rho \leq \rho + r$). Plugging this rate into \eqref{eq:strong gradient inequality} yields 
    \begin{equation}\label{eq:convergence rate for gradient}
        \| \nabla f(x(t)) - \nabla f(x_{*})\| = o\left(\frac{1}{t^{\rho} \sqrt{\beta(t)}}\right) \quad \text{as }t\to +\infty. 
    \end{equation}
    Now, 
    \begin{align*}
        \| A^{*}(\lambda(t) - \lambda_{*})\| &\leq \| \nabla f(x(t)) - \nabla f(x_{*}) + A^{*}\lambda(t) - A^{*}\lambda_{*}\| + \| \nabla f(x(t)) - \nabla f(x_{*})\| \\
        &= \| \nabla f(x(t)) + A^{*}\lambda(t)\| + \| \nabla f(x(t)) - \nabla f(x_{*})\|.
    \end{align*}
    According to \eqref{eq:rate norm primal-dual operator}, the left summand is of order $o\left(\frac{1}{t^{\rho + r}\beta(t)}\right)$ as $t\to +\infty$; according to \eqref{eq:convergence rate for gradient}, the right summand is of order $o\left(\frac{1}{t^{\rho}\sqrt{\beta(t)}}\right)$ as $t\to +\infty$. Since $\beta$ is nondecreasing on $[t_{0}, +\infty)$, we have $\sqrt{\beta(t)} = \frac{\beta(t)}{\sqrt{\beta(t)}}\leq \frac{\beta(t)}{\sqrt{\beta(t_{0})}}$. Since $t^{\rho} \leq t^{\rho + r}$ for large $t$, this allows us to write the inequality
    \[
        \frac{\sqrt{\beta(t_{0})}}{t^{\rho + r}\beta(t)} \leq \frac{1}{t^{\rho}\sqrt{\beta(t)}} \quad \text{for large enough }t,
    \]
    from which we deduce that 
    \[
        \| A^{*}(\lambda(t) - \lambda_{*})\| = o\left( \frac{1}{t^{\rho}\sqrt{\beta(t)}}\right) \quad \text{as }t\to +\infty. 
    \]
    We have thus shown parts \ref{thm:minimization primal-dual:i} and \ref{thm:minimization primal-dual:ii} of this theorem. Part \ref{thm:minimization primal-dual:iii}, i.e., the weak convergence of the trajectories to a primal-dual solution to \eqref{eq:constrained minimization 2}, is again a direct corollary of part \ref{thm:minimization primal-dual:iii} of Theorem \ref{eq:main theorem continuous}. 
\end{proof}

\begin{remark}
    The system \eqref{eq:minimization primal-dual system} resembles the one studied by He et al. in \cite{HeHuFang}, for the case where $r = s$. There, the authors consider an extrapolation parameter of the form $\theta t^{s}$, where $s\in [r, 1]$. The only difference between our system and theirs lies in the inclusion of the Hessian-driven damping term attached to the velocity of the primal trajectory $x(t)$. The rates we obtained are identical to those in \cite{HeHuFang}, but our system further allows us to show the weak convergence of the trajectories towards primal-dual solutions of the minimization problem. 
\end{remark}
\begin{remark}
    When $r = 1$ and $\beta(t)\equiv 1$, we obtain a system similar to the one addressed by Bo\c t and Nguyen in \cite{BotNguyen}. Again, our system features an extra Hessian-driven damping term, which does not appear in \cite{BotNguyen}. Our rates coincide with those in this work, but, as an interesting point, our system does not require to assume that $\nabla f$ is $L$-Lipschitz continuous to show the weak convergence of the generated trajectories. 
\end{remark}

\subsection{Some numerical experiments}
In this subsection, we will complement the theoretical results with two numerical examples. The first one is the minimization of a strongly convex function under linear constraints, and the second one is finding the saddle points of a certain convex-concave function. 
\begin{example}
    Consider the minimization problem 
    \begin{equation*}
        \begin{array}{rl}
	    \min & f(x_{1}, x_{2}, x_{3}, x_{4}) := (x_{1} - 1)^{2} + (x_{2} - 1)^{2} +        x_{3}^{2} + x_{4}^{2} \\
	    \textrm{subject to} 	& x_{1} - x_{2} - x_{3} = 0 \\
	    & x_{2} - x_{4} = 0.
        \end{array}
    \end{equation*}
    The optimality conditions can be calculated and lead to the primal-dual solution pair
    \[
        x_{*} = 
        \begin{bmatrix}
	   0.8 \\
	   0.6 \\
	   0.2 \\
	   0.6
        \end{bmatrix}
        \qquad
        \text{and}
        \qquad 
        \lambda_{*} = 
        \begin{bmatrix}
	   0.4 \\
	   1.2
        \end{bmatrix}.
    \]
    For $t \geq t_{0} = 1$, we plot the functional values as well as the feasibility gap along the trajectories generated by \eqref{eq:minimization primal-dual system}. In Figure \ref{fig:strongly convex beta(t) = 1} we have parameters $\alpha = 8$, $\theta = \frac{1}{4}$ and $\beta(t)\equiv 1$. The predicted convergence rates in this case are of $o\left(\frac{1}{t^{2r}}\right)$ as $t\to +\infty$, so as expected, we can see faster convergence behaviour as $r$ goes from $0.2$ to $1$. 
    
    \begin{figure}[H] 
        \minipage{0.5\textwidth}
        \includegraphics[width=\linewidth]{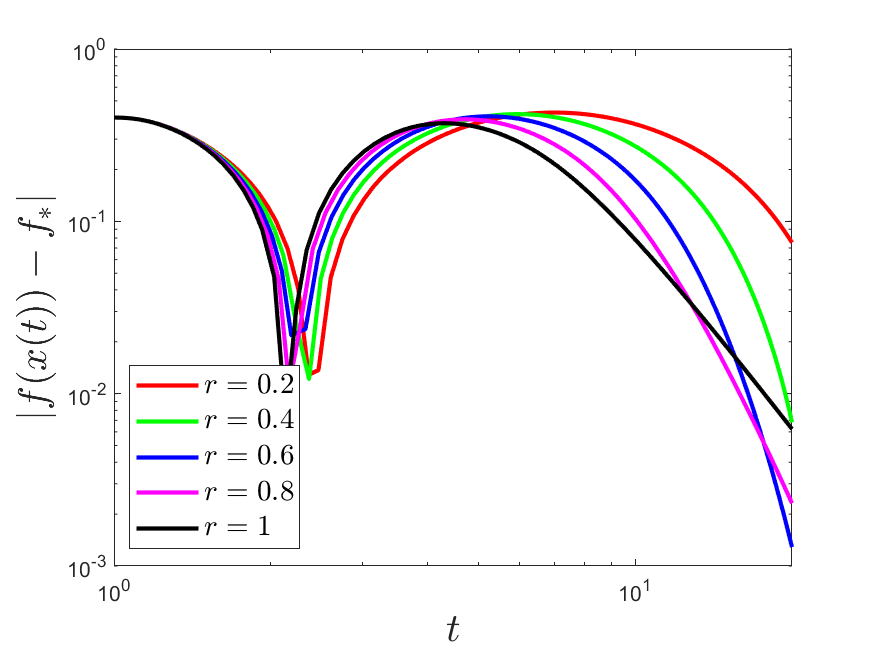}
        \endminipage\hfill
        \minipage{0.5\textwidth}
        \includegraphics[width=\linewidth]{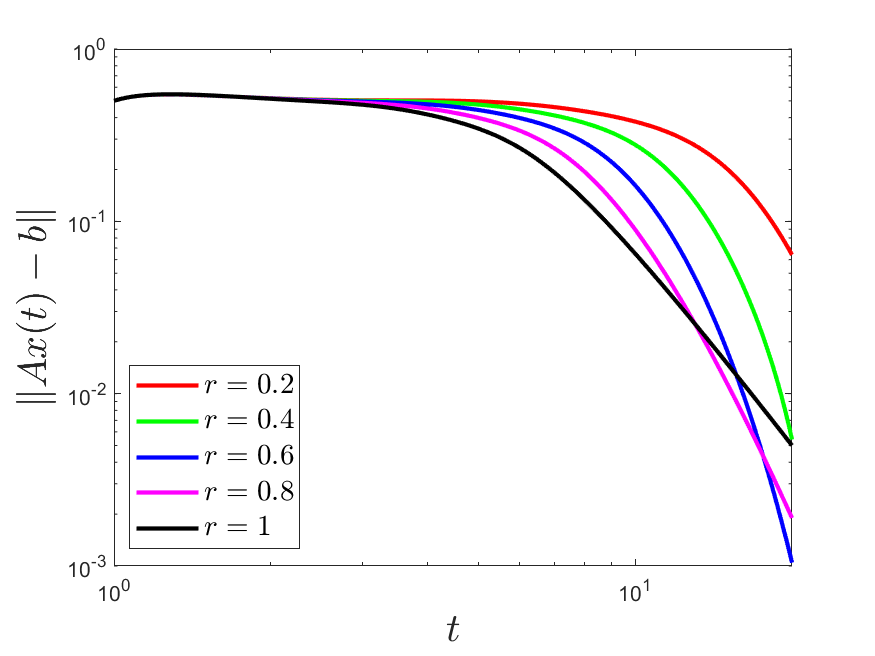}
        \endminipage
        \caption{Influence of $r$ on the functional values and feasibility gap along the trajectories, $\beta(t)\equiv 1$}
        \label{fig:strongly convex beta(t) = 1}
    \end{figure}

    In Figure \ref{fig:strongly convex beta(t) = exp} we plot the same quantities, but this time, we choose $\alpha = 8$, $\theta = \frac{1}{3}$, $\delta = 2$ and $\beta(t) =  \frac{1}{t^{2r}} \exp \left[ \left(\frac{1}{\theta} - \delta\right) \frac{t^{1 - r}}{1 - r}\right] $. Here, the predicted convergence rates are of $o\left(\exp\left[-\left(\frac{1}{\theta} - \delta\right)\frac{t^{1 - r}}{1 - r}\right]\right)$ as $t\to +\infty$. As expected, since $t^{1 - r}$ grows to infinity slower as $r$ approaches $1$, the convergence behaviour speeds up as $r$ goes from near $1$ to $0$.

    \begin{figure}[H] 
        \minipage{0.5\textwidth}
        \includegraphics[width=\linewidth]{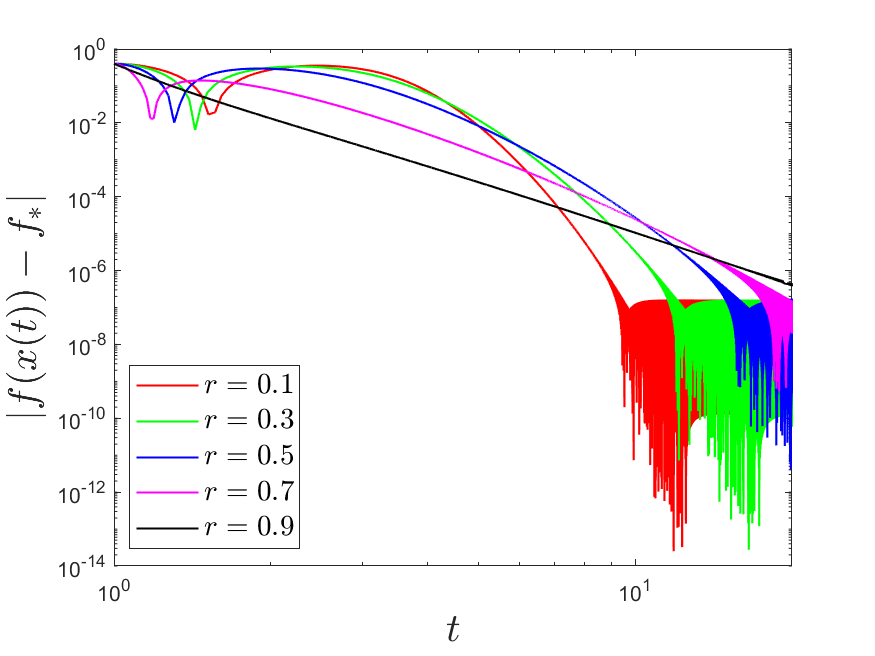}
        \endminipage\hfill
        \minipage{0.5\textwidth}
        \includegraphics[width=\linewidth]{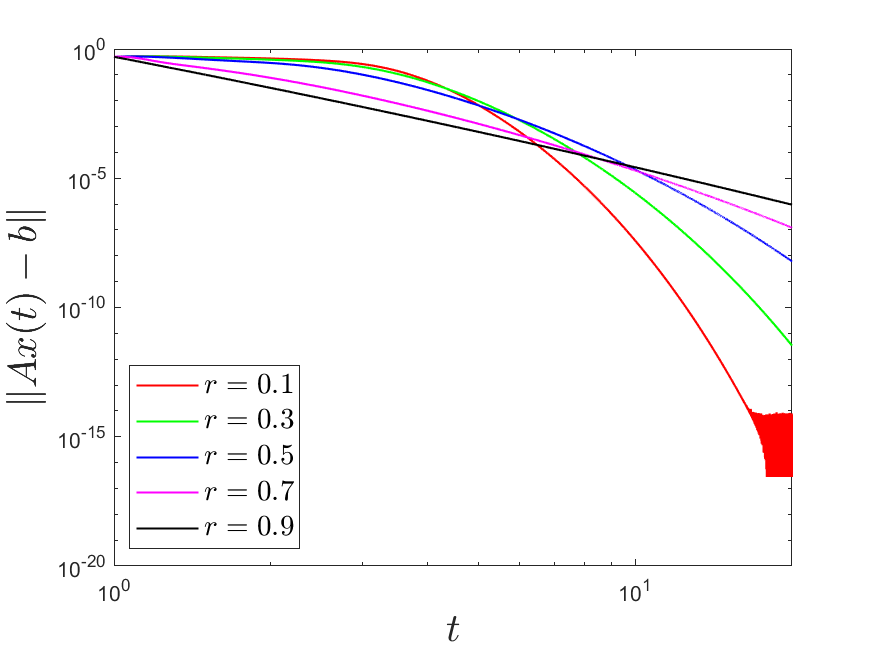}
        \endminipage
        \caption{Influence of $r$ on the functional values and feasibility gap along the trajectories, $\beta(t) = \frac{1}{t^{2r}} \exp \left[ \left(\frac{1}{\theta} - \delta\right) \frac{t^{1 - r}}{1 - r}\right]$}
        \label{fig:strongly convex beta(t) = exp}
    \end{figure}
\end{example}
\begin{example}
    We address the saddle point problem 
    \[
        \min_{x\in\R^{n}}\max_{y\in\R^{n}} \Phi(x, y) := \frac{1}{2} \langle x, Hx\rangle - \langle x, h\rangle - \langle y, Ax - b\rangle, 
    \]
    where 
    \[
        A := \frac{1}{4}
        \begin{bmatrix}
            & & & -1 & 1 \\
            & & \scalebox{-1}[1]{$\ddots$} & \scalebox{-1}[1]{$\ddots$} & \\
            & -1 & 1 & & \\
            -1 & 1 & & & \\
            1 & & & &
        \end{bmatrix} \in\R^{n\times n}, 
        \quad 
        H := 2A^{*} A, 
        \quad
        b := \frac{1}{4}
        \begin{bmatrix}
            1 \\
            1 \\
            \vdots \\
            1 \\
            1
        \end{bmatrix} \in \R^{n}, 
        \quad 
        h := \frac{1}{4}
        \begin{bmatrix}
            0 \\
            0 \\
            \vdots \\
            0 \\
            1
        \end{bmatrix} \in \R^{n}.
    \]
    The corresponding continuous and monotone operator whose zeros we wish to find is given by 
    \[
        V(x, y) := 
        \begin{bmatrix}
            \nabla_{x} \Phi(x, y) \\
            -\nabla_{y} \Phi(x, y)
        \end{bmatrix}
        = 
        \begin{bmatrix}
            Hx - h - A^{*}y \\
            Ax - b
        \end{bmatrix}.
    \]
    In Figure \ref{fig:operator}, for $t \geq t_{0} = 1$ we plot the norm of the operator $V$ along the trajectories generated by \eqref{eq:system r=s in (0, 1)}. We choose $\alpha = 8$, $\theta = \frac{1}{4}$ and $\beta(t) \equiv 1$ and $\beta(t) = t$ for the first and second plots respectively, and $\delta = 3$ and $\beta(t) =  \frac{1}{t^{2r}} \exp \left[ \left(\frac{1}{\theta} - \delta\right) \frac{t^{1 - r}}{1 - r}\right]$ for the third plot. The predicted rates for the first two plots are $o\left(\frac{1}{t^{2r}}\right)$ and $o\left(\frac{1}{t^{2r + 1}}\right)$ as $t\to +\infty$ respectively, so as expected we see an overall faster convergence behaviour in the second plot, and in both plots the quantities approach zero faster $r$ moves from near $0$ to $1$. As we argued in the first example, in the third plot we have rates of $o\left(\exp\left[-\left(\frac{1}{\theta} - \delta\right)\frac{t^{1 - r}}{1 - r}\right]\right)$ as $t\to +\infty$, so we see a trend of faster convergence the smaller $r$ is. 
    \begin{figure}[H] 
        \minipage{0.33\textwidth}
        \includegraphics[width=\linewidth]{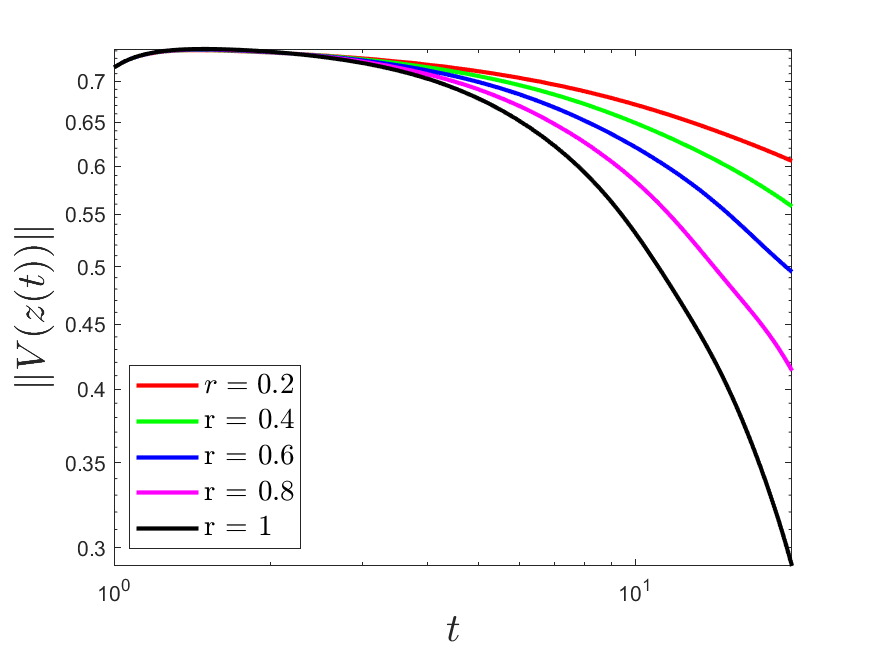}
        \endminipage\hfill
        \minipage{0.33\textwidth}
        \includegraphics[width=\linewidth]{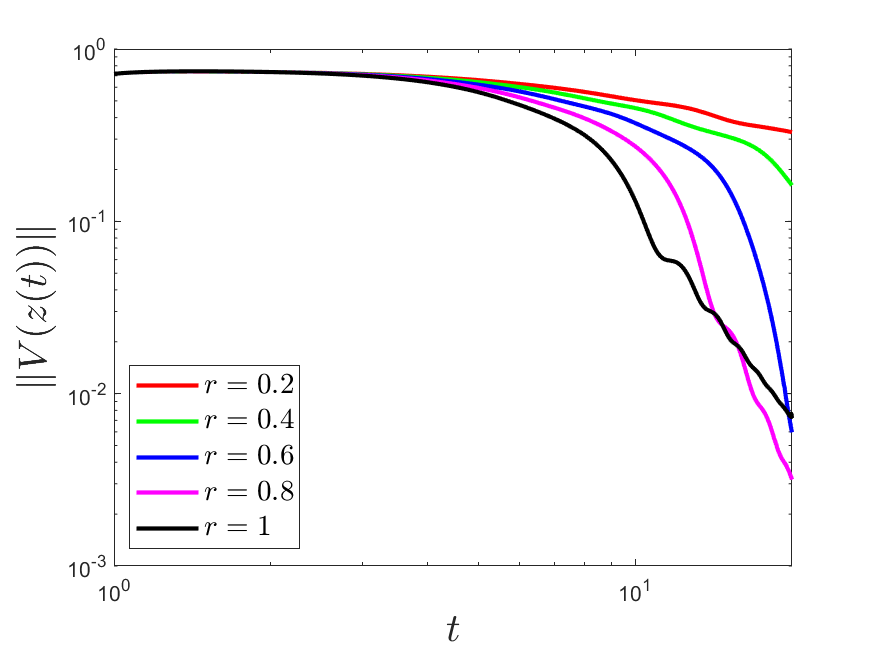}
        \endminipage\hfill
        \minipage{0.33\textwidth}
        \includegraphics[width=\linewidth]{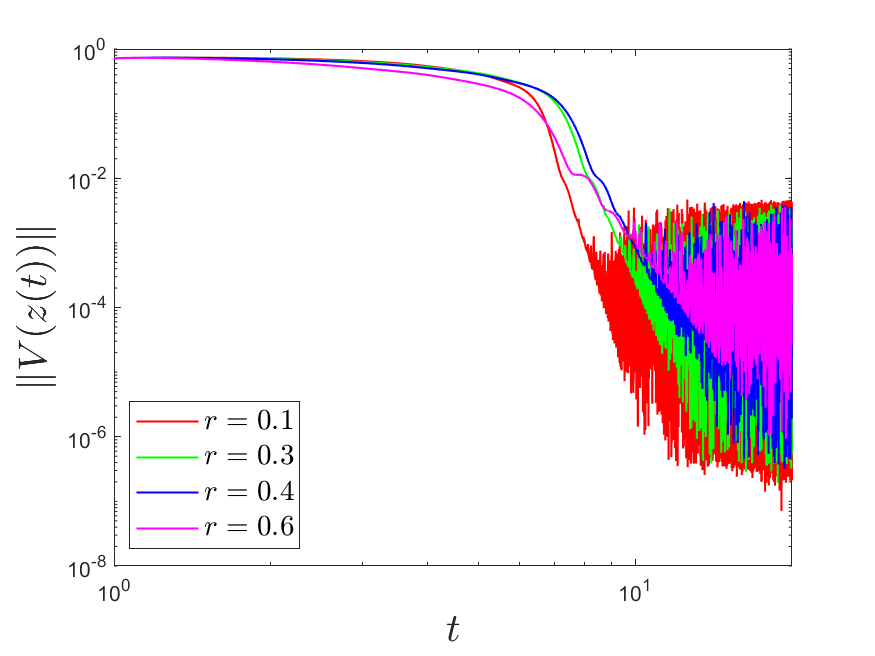}
        \endminipage
        \caption{Influence of r on the norm of the operator along the trajectories, with parameter $\beta$ chosen respectively as $\beta(t)\equiv 1$, $\beta(t) = t$ and $\beta(t) = \frac{1}{t^{2r}} \exp \left[ \left(\frac{1}{\theta} - \delta\right) \frac{t^{1 - r}}{1 - r}\right]$}
        \label{fig:operator}
    \end{figure}
\end{example}

\section{An implicit discretization of the continuous system}

\subsection{Discretization of an equivalent first-order reformulation}

\noindent Given the second order system 
\begin{equation}\label{eq:continuous system}
    \Ddot{z}(t) + \frac{\alpha}{t^{r}} \Dot{z}(t) + \theta t^{r} \beta(t) \frac{d}{dt}V(z(t)) + \beta(t) V(z(t)) = 0, 
\end{equation}
we readily see it is equivalent to the following first-order reformulation:  
\begin{equation}\label{eq:first order reformulation}
    \begin{dcases}
        \Dot{u}(t) &= 2t^{r} \Bigl[ \bigl(2r\theta t^{r - 1} - 1 \bigr) \beta(t) + \theta t^{r} \Dot{\beta}(t)\Bigr] V(z(t)) + 2r (1 - r) t^{r - 2} z(t) \\
        u(t) &= 2\bigl(\alpha - r t^{r - 1}\bigr)z(t) + 2t^{r} \Dot{z}(t) + 2\theta t^{2r} \beta(t) V(z(t)). 
    \end{dcases}
\end{equation}
A seemingly useful observation to obtain a discretization which works is to write 
\begin{equation}\label{eq:2r(1 - r)t^(r - 2)z(t)}
    2r(1 - r) t^{r - 2} z(t) = 2r \frac{d}{dt}\bigl(-t^{r - 1}\bigr) z(t).
\end{equation}
Taking \eqref{eq:2r(1 - r)t^(r - 2)z(t)} into account, we consider the following discretization of \eqref{eq:first order reformulation}: 
\begin{equation}\label{eq:discretization}
    \begin{dcases}
        u^{k + 1} - u^{k} &= 2k^{r}\Bigl[\bigl(2r\theta k^{r - 1} - 1\bigr)\beta_{k} + \theta k^{r}\bigl(\beta_{k} - \beta_{k - 1}\bigr) \Bigr] V(z^{k + 1}) + 2r \Bigl[ k^{r - 1} - (k + 1)^{r - 1}\Bigr]z^{k + 1} \\
        u^{k + 1} &= 2\bigl[ \alpha - r(k + 1)^{r - 1}\bigr]z^{k + 1} + 2 (k + 1)^{r} \bigl(z^{k + 1} - z^{k}\bigr) + 2\theta (k + 1)^{2r} \beta_{k} V(z^{k + 1}). 
    \end{dcases}
\end{equation}
Now, from the second line of \eqref{eq:discretization} we deduce 
\begin{align*}
    u^{k + 1} - u^{k} = & \: 2\bigl[ \alpha - r(k + 1)^{r - 1}\bigr]z^{k + 1} + 2(k + 1)^{r}\bigl(z^{k + 1} - z^{k}\bigr) + 2\theta (k + 1)^{2r} \beta_{k}V(z^{k + 1}) \\
    &- 2\bigl[ \alpha - rk^{r - 1}\bigr]z^{k} - 2k^{r} \bigl(z^{k} - z^{k - 1}\bigr) - 2\theta k^{2r} \beta_{k - 1}V(z^{k}) \\
    = &\: 2\alpha \bigl( z^{k + 1} - z^{k}\bigr) - 2r \Bigl[ (k + 1)^{r - 1} z^{k + 1} - k^{r - 1} z^{k}\Bigr] + 2(k + 1)^{r} \bigl( z^{k + 1} - z^{k}\bigr) \\
    &- 2k^{r} \bigl( z^{k} - z^{k - 1}\bigr) + 2\theta \Bigl[ (k + 1)^{2r}\beta_{k} V(z^{k + 1}) - k^{2r}\beta_{k - 1} V(z^{k})\Bigr] \\
    = &\: 2\alpha \bigl( z^{k + 1} - z^{k}\bigr) - 2r \Bigl[ \bigl((k + 1)^{r - 1} - k^{r - 1}\bigr) z^{k + 1}  + k^{r - 1}\bigl(z^{k + 1} - z^{k}\bigr)\Bigr] \\
    &+ 2(k + 1)^{r} \bigl( z^{k + 1} - z^{k}\bigr) - 2k^{r} \bigl( z^{k} - z^{k - 1}\bigr) + 2\theta \Bigl[ (k + 1)^{2r} \beta_{k}V(z^{k + 1}) - k^{2r} \beta_{k - 1}V(z^{k})\Bigr] \\
    = &\: 2r \Bigl[ k^{r - 1} - (k + 1)^{r - 1}\Bigr]z^{k + 1} + 2\Bigl[ \alpha - rk^{r - 1} + (k + 1)^{r}\Bigr] \bigl( z^{k + 1} - z^{k}\bigr) - 2k^{r} \bigl( z^{k} - z^{k - 1}\bigr) \\
    &+ 2\theta \Bigl[ (k + 1)^{2r}\beta_{k} - k^{2r}\beta_{k - 1}\Bigr] V(z^{k + 1}) + 2\theta k^{2r} \beta_{k - 1} \Bigl[ V(z^{k +1}) - V(z^{k})\Bigr] \\
    = &\: 2k^{r} \Bigl[\bigl(2r\theta k^{r - 1} - 1\bigr)\beta_{k} + \theta k^{r} \bigl( \beta_{k} - \beta_{k - 1}\bigr) \Bigr]V(z^{k + 1}) + 2r \Bigl[ k^{r - 1} - (k + 1)^{r - 1}\Bigr]z^{k + 1}, 
\end{align*}
where the last equality comes from the first line of \eqref{eq:discretization}. Cancelling the second summand in the last line, we come to 
\begin{align}
    &\:2\Bigl[ \alpha - rk^{r - 1} + (k + 1)^{r}\Bigr] \bigl( z^{k + 1} - z^{k}\bigr) - 2k^{r} \bigl( z^{k} - z^{k - 1}\bigr) +  2\theta \Bigl[ (k + 1)^{2r}\beta_{k} - k^{2r}\beta_{k - 1}\Bigr] V(z^{k + 1}) \nonumber\\
    &+ 2\theta k^{2r} \beta_{k - 1}\Bigl[ V(z^{k +1}) - V(z^{k})\Bigr] \nonumber\\
    = &\: 2k^{r} \Bigl[ \bigl( 2r\theta k^{r - 1} - 1\bigr)\beta_{k} + \theta k^{r} \bigl(\beta_{k} - \beta_{k - 1}\bigr) \Bigr] V(z^{k + 1}). \label{eq:algorithm}
\end{align}
Rearranging the terms of the previous equality leads to the formulation \eqref{eq:algorithm intro} presented in the introduction:
\begin{align*}
    z^{k + 1} = & \: z^{k} + \frac{k^{r}}{\alpha - rk^{r - 1} + (k + 1)^{r}} \bigl( z^{k} - z^{k - 1}\bigr) - \frac{\theta k^{2r} \beta_{k - 1}}{\alpha - rk^{r - 1} + (k + 1)^{r}} \Bigl[ V(z^{k + 1}) - V(z^{k})\Bigr] \\
    &- \frac{\theta \Bigl\{ \Bigl[ (k + 1)^{2r} - k^{2r}\Bigr] - 2rk^{2r - 1}\Bigr\} + k^{r}}{\alpha - rk^{r - 1} + (k + 1)^{r}} \beta_{k} V(z^{k + 1}).
\end{align*}

\subsection{Convergence rates and weak convergence of iterates}

Before we start with the analysis, we need to remark some inequalities involving the sequence $(\beta_{k})_{k\geq 1}$. 
\begin{remark}\label{rem:growth condition}
    In \eqref{eq:discrete growth intro}, the growth conditions reads $\sup_{k\geq k_{0}} k^{r} \left(\frac{\beta_{k} - \beta_{k - 1}}{\beta_{k}} + \frac{2r}{k}\right) < \frac{1}{2\theta} < \frac{1}{\theta}$. Throughout the majority of the proofs, we will use a couple of inequalities involving the sequence $\bigl( \beta_{k}\bigr)_{k\geq 0}$ which are entailed by assuming the supremum is strictly less than $\frac{1}{\theta}$. More precisely, there exists $\delta > 0$ such that $\delta < \frac{1}{\theta}$ and 
    \begin{equation}\label{eq:supremum < than 1/theta - delta}
        \sup_{k\geq k_{0}} k^{r} \left[ \frac{\beta_{k} - \beta_{k - 1}}{\beta_{k}} + \frac{2r}{k}\right] < \frac{1}{\theta} - \delta.
    \end{equation}
    It follows that $\frac{k^{r} \bigl( \beta_{k} - \beta_{k - 1}\bigr)}{\beta_{k}} + 2rk^{r - 1} < \frac{1}{\theta} - \delta$ and therefore
    \begin{equation}\label{eq:G1}
        \bigl(2r\theta k^{r - 1} - 1\bigr)\beta_{k} + \theta k^{r} \bigl(\beta_{k} - \beta_{k - 1}\bigr) \leq -\delta \theta \beta_{k} < 0 \qquad \forall k\geq k_{0}.
    \end{equation}
    The previous inequality also produces
    \begin{equation}\label{eq:G2}
        \theta k^{r} \bigl( \beta_{k} - \beta_{k - 1}\bigr) \leq \bigl(1 - 2r\theta k^{r - 1} - \delta\theta\bigr) \beta_{k} \qquad \forall k\geq k_{0}. 
    \end{equation}
    Regrouping the terms with $\beta_{k}$ and $\beta_{k - 1}$ separately yields $\left(2r k^{r - 1} - \frac{1}{\theta} + \delta + k^{r}\right)\beta_{k} \leq k^{r} \beta_{k - 1}$ and thus
    \begin{equation}\label{eq:G3}
        \beta_{k} \leq \frac{k^{r}}{2r k^{r - 1} - \frac{1}{\theta} + \delta + k^{r}} \beta_{k - 1} \leq M_{\beta} \beta_{k - 1},
    \end{equation}
    where $M_{\beta} > 1$ but may be taken arbitrarily close to $1$ for $k$ large enough. 
\end{remark}
\begin{remark}
    As we stated in the introduction, when $r \in (0, 1)$ then it is possible to choose $\beta_{k}$ such that $k^{2r} \beta_{k}$ grows exponentially. We claim that if $0 < \delta < \frac{1}{2\theta}$, then 
    \[
        \beta_{k} = \frac{1}{k^{2r}} \cdot e^{\left(\frac{1}{2\theta} - \delta\right) \frac{k^{1 - r}}{1 - r}}
    \]
    satisfies growth condition \eqref{eq:discrete growth intro}. Indeed, first we notice that $\beta_{k} = \beta(k)$, where $\beta(t)$ is defined in continuous-time as 
    \[
        \beta(t) = \frac{1}{t^{2r}} \cdot e^{\left( \frac{1}{2\theta} - \delta\right) \frac{t^{1 - r}}{1 - r}}. 
    \]
    By the mean value theorem, there exists $\xi_{k} \in (k - 1, k)$ such that 
    \[
        \beta_{k} - \beta_{k - 1} = \beta(k) - \beta(k - 1) = \Dot{\beta}(\xi_{k}) = \exp \left[\left(\frac{1}{2\theta} - \delta\right) \frac{\xi_{k}^{1 - r}}{1 - r}\right] \left[ -\frac{2r}{\xi_{k}^{2r + 1}} + \left( \frac{1}{2\theta} - \delta\right) \frac{1}{\xi_{k}^{3r}}\right].
    \]
    Therefore, for every $k$ we have 
    \begin{align}
        k^{r} \left(\frac{\beta_{k} - \beta_{k - 1}}{\beta_{k}} + \frac{2r}{k}\right) = &\: \exp\left[ \left(\frac{1}{2\theta} - \delta\right) \frac{\xi_{k}^{1 - r} - k^{1 - r}}{1 - r}\right] \left( \frac{1}{2\theta} - \delta\right) \frac{k^{3r}}{\xi_{k}^{3r}} \label{eq:exponential discrete line 1}\\
        &- \exp\left[ \left(\frac{1}{2\theta} - \delta\right) \frac{\xi_{k}^{1 - r} - k^{1 - r}}{1 - r}\right] \frac{2r k^{3r}}{\xi_{k}^{2r + 1}} + \frac{2r}{k^{1 - r}}. \label{eq:exponential discrete line 2}
    \end{align}
    First, we look at line \eqref{eq:exponential discrete line 1}. Again by the mean value theorem, we have $k^{1 - r} - \xi_{k}^{1 - r} = (1 - r)\tau_{k}^{-r}(k - \xi_{k})$ for some $\tau_{k}\in (\xi_{k}, k)\subseteq (k - 1, k)$. It follows that $\frac{1}{k^{r}} \leq \frac{1}{\tau_{k}^{r}} \leq\frac{1}{(k - 1)^{r}}$ and thus $\xi_{k}^{1 - r} - k^{1 - r} \to 0$ as $k\to +\infty$. Since $1 \leq \frac{k^{3r}}{\xi_{k}^{3r}} \leq \left(\frac{k}{k - 1}\right)^{3r}$, we have $\frac{k^{3r}}{\xi_{k}^{3r}}\to 1$ as $k\to +\infty$. All in all, line \eqref{eq:exponential discrete line 1} approaches $\frac{1}{2\theta} - \delta$ as $k\to +\infty$. Regarding line \eqref{eq:exponential discrete line 2}, as we just saw, the exponential term approaches $1$ and $\frac{2r}{k^{1 - r}} \to 0$ as $k\to +\infty$. Since $\frac{1}{k^{1 - r}}\leq \frac{k^{3r}}{\xi_{k}^{2r + 1}} \leq \frac{k^{3r}}{k^{2r + 1}}$, we have $\frac{2r k^{3r}}{k^{2r + 1}} \to 0$ as $k\to +\infty$. Summarizing, we have $k^{r} \left(\frac{\beta_{k} - \beta_{k - 1}}{\beta_{k}} + \frac{2r}{k}\right) \to  \frac{1}{2\theta} - \delta$ as $k\to +\infty$, which means that for large enough $k_{0}$ the supremum condition \eqref{eq:supremum < than 1/theta - delta} is fulfilled. 
\end{remark}

\noindent\textbf{Energy function}: For analizing the convergence properties of the implicit algorithm \eqref{eq:algorithm}, we make use of the following discrete energy function: 

\begin{align}
    \mathcal{E}_{\lambda}^{k} := &\:\frac{1}{2} \left\| k^{\rho - r}2\lambda (z^{k} - z_{*}) + 2k^{\rho} \bigl(z^{k} - z^{k - 1}\bigr) + \theta k^{\rho + r} \beta_{k - 1}V(z^{k})\right\|^{2} \label{eq:energy function line 1}\\
    &+ 2\lambda k^{2(\rho - r)}\bigl( \alpha - (2\rho - r)k^{r - 1} - \lambda\bigr) \| z^{k} - z_{*}\|^{2} \label{eq:energy function line 2}\\
    &+ 2\lambda \theta k^{2\rho} \beta_{k - 1}\langle z^{k} - z_{*}, V(z^{k})\rangle \label{eq:energy function line 3}\\
    &+ \frac{\theta^{2}}{2} (k + 1)^{2r} k^{2\rho} \beta_{k}\beta_{k -  1} \| V(z^{k})\|^{2}. \label{eq:energy function line 4} 
\end{align}
Here, $\rho $ and $\lambda$ are taken as follows: 
\begin{itemize}
    \item If $r\in (0, 1)$, then $0 < \lambda < \alpha$ and $0 < \rho < r$;
    \item If $r = 1$, then $0 < \lambda < \alpha - 1$ and $\rho = r = 1$. 
\end{itemize}
The following theorem shows that the $\mathcal{O}$ convergence rates, obtained as an intermediate step for the continuous dynamical system, carry over to the discrete setting. 

\begin{theorem}\label{th:big O rates}
    Suppose that $\alpha, \theta > 0$, $r\in (0, 1]$ and $(\beta_{k})_{k\geq 1} \subseteq (0, +\infty)$ satisfy the assumptions laid down in subsection \eqref{subsec:our work}. Let $z^{0}, z^{1}$ be initial points in $\mathcal{H}$, let $\bigl(z^{k}\bigr)_{k\geq 2}$ be the sequence generated by the implicit algorithm \eqref{eq:algorithm}, equivalently, by \eqref{eq:algorithm intro}, and let $z_{*}$ be a zero of $V$. Consider the following convergence rates as $k\to +\infty$:  
    \[
        \| V(z^{k})\| = \mathcal{O}\left( \frac{1}{k^{\rho + r} \beta_{k}}\right) \quad \langle z^{k + 1} - z_{*}, V(z^{k + 1})\rangle = \mathcal{O}\left(\frac{1}{k^{2\rho} \beta_{k}}\right), \quad \bigl\| z^{k} - z^{k - 1}\bigr\| = \mathcal{O}\left(\frac{1}{k^{\rho}}\right).
    \]
    The following statements are true: 
    \begin{enumerate}[\rm (i)]
        \item If $r\in (0, 1)$, the above rates hold for $\rho \in (0, r)$. Furthermore, if $\bigl(z^{k}\bigr)_{k\geq 2}$ is bounded, then these rates hold for $\rho = r$. 
        \item If $r = 1$, then $\bigl(z^{k}\bigr)_{k\geq 2}$ is bounded; moreover, the above rates hold for $\rho = 1$. 
    \end{enumerate}
\end{theorem}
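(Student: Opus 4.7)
The plan is to mirror the proof of Theorem \ref{eq:main theorem continuous} in discrete form, with the continuous energy function and its time derivative replaced by $\mathcal{E}^k_\lambda$ as defined in \eqref{eq:energy function line 1}--\eqref{eq:energy function line 4} and by its one-step difference $\mathcal{E}^{k+1}_\lambda - \mathcal{E}^k_\lambda$. First I would verify eventual nonnegativity of $\mathcal{E}^k_\lambda$: the only potentially problematic term is line \eqref{eq:energy function line 2}, and the coefficient $\alpha - (2\rho - r)k^{r-1} - \lambda$ is positive for large $k$ when $r\in(0,1)$ (since $\lambda < \alpha$ and $k^{r-1}\to 0$) and equals $\alpha - 1 - \lambda > 0$ when $r = 1$.

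Next I would compute $\mathcal{E}^{k+1}_\lambda - \mathcal{E}^k_\lambda$ by expanding each of the four summands with the identity $\|a\|^2 - \|b\|^2 = \langle a - b, a + b\rangle$. The role played by the ODE in the continuous proof is taken over by the identity \eqref{eq:algorithm}, which I will substitute whenever an expression of the form $(z^{k+1}-z^k) - (z^k - z^{k-1})$ appears; this is what couples the energy increment with $V(z^{k+1})$ and with the $\beta$-increments. Monotonicity of $V$ enters through the two discrete inequalities $\langle z^{k+1} - z_*, V(z^{k+1})\rangle \ge 0$ and $\langle z^{k+1} - z^k, V(z^{k+1}) - V(z^k)\rangle \ge 0$, playing the roles of their continuous counterparts. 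The growth condition in the form \eqref{eq:G1}--\eqref{eq:G3} will be used to absorb the $\beta_k - \beta_{k-1}$ terms, exactly as \eqref{eq:growth condition delta} did for $\dot\beta(t)$.

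After these manipulations, the discrete energy difference will be bounded above by the sum of a term analogous to line \eqref{eq:d energy function r=s line 1} (a multiple of $k^{2(\rho-r)-1}\|z^k - z_*\|^2$ that vanishes when $\rho < r$ for $r\in(0,1)$, when $r=1$ and $\rho = 1$, and otherwise becomes the extra $\mathcal{O}(k^{r-2})$ summable term using boundedness of $(z^k)$), plus a discrete version of the quadratic form with $X = z^{k+1}-z^k$ and $Y = V(z^{k+1})$. I would apply Lemma \ref{lem:quad} with the same template choices of $A$, $B$, $C$ as in the continuous case, but with $\Dot\beta(t)$ replaced by $(\beta_k - \beta_{k-1})$ and $t$ replaced by $k$ (or $k+1$ where line \eqref{eq:energy function line 4} forces it). The inequalities in \eqref{eq:G1}--\eqref{eq:G3} give exactly the discrete analog of the $B^2 - AC \le 0$ computation, producing two admissible values $\lambda_{r,1},\lambda_{r,2}$ (for $r\in(0,1)$, with a midpoint $\varepsilon_{r,1} = (c_r+1)/(2\theta)\in(0,\alpha)$; for $r=1$ the midpoint is $(c+1-2\theta)/(2\theta)\in(0,\alpha-1)$, which is why the range of $\theta$ is tightened to $\theta<1/4$).

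From $\mathcal{E}^{k+1}_{\lambda_{r,1}} \le \mathcal{E}^k_{\lambda_{r,1}} + (\text{summable correction})$ I extract uniform boundedness of $\mathcal{E}^k_{\lambda_{r,1}}$, and then the three $\mathcal{O}$-rates follow immediately by reading off lines \eqref{eq:energy function line 1}--\eqref{eq:energy function line 4}: line \eqref{eq:energy function line 4} gives $\|V(z^k)\| = \mathcal{O}(k^{-(\rho+r)}/\sqrt{\beta_k\beta_{k-1}})$, which via \eqref{eq:G3} is $\mathcal{O}(k^{-(\rho+r)}/\beta_k)$; line \eqref{eq:energy function line 3} gives the rate for $\langle z^k-z_*, V(z^k)\rangle$; and the triangle inequality applied to line \eqref{eq:energy function line 1}, combined with the bound on $\|z^k - z_*\|$ from line \eqref{eq:energy function line 2}, yields $\|z^k - z^{k-1}\| = \mathcal{O}(k^{-\rho})$. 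For $r = 1$, boundedness of $(z^k)$ comes directly from line \eqref{eq:energy function line 2}; for $r\in(0,1)$ with $\rho = r$, the correction term $2\lambda r(1-r)k^{r-2}\|z^k-z_*\|^2$ is summable under the assumed boundedness, so the same argument goes through. The main obstacle is the discrete bookkeeping in the quadratic-form step, where the asymmetric index shift in line \eqref{eq:energy function line 4} (which involves $(k+1)^{2r}\beta_k \beta_{k-1}$ rather than the symmetric $k^{2r}\beta_{k-1}^2$) and the need to keep $V(z^{k+1})$ and $V(z^k)$ consistent force a slightly tighter growth condition ($\delta < 1/(2\theta)$ rather than $\delta < 1/\theta$) and the stricter bound $\theta < 1/4$ in the critical case $r=1$.
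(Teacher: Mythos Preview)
Your high-level plan is right, and the paper does exactly this: work with $\mathcal{E}^k_\lambda$, expand $\mathcal{E}^{k+1}_\lambda - \mathcal{E}^k_\lambda$, substitute the algorithm identity \eqref{eq:algorithm}, and use Lemma \ref{lem:quad}.  But the discrete analysis is not a line-by-line translation of the continuous one, and your sketch misses three discrete-specific complications that the paper handles explicitly.

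First, there is not one quadratic form but three.  In continuous time the cross term $\langle V(z),\frac{d}{dt}V(z)\rangle$ cancels between the derivative of line \eqref{eq:energy function r=s line 1} and that of line \eqref{eq:energy function r=s line 4}.  In discrete time the analogous terms do \emph{not} cancel: a residual $\theta k^{2\rho}\beta_{k-1}\eta_k\langle V(z^{k+1}),V(z^{k+1})-V(z^k)\rangle$ survives (this is line \eqref{eq:dE line 9} in the paper), together with a negative $\|V(z^{k+1})-V(z^k)\|^2$ term.  These must be controlled by a separate application of Lemma \ref{lem:quad} with $X=V(z^{k+1})$, $Y=V(z^{k+1})-V(z^k)$.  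Similarly, the coefficient of $\langle z^{k+1}-z_*,z^{k+1}-z^k\rangle$, which vanishes identically in continuous time (your line \eqref{eq:d energy function r=s line 2}), leaves a residue $P_k=\mathcal{O}(k^{2\rho-r-2})$ in discrete time and must be absorbed via yet another quadratic form with $X=z^{k+1}-z_*$, $Y=z^{k+1}-z^k$.

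Second, your midpoint formula is the continuous one and is wrong here.  Because the $\|V(z^{k+1})\|^2$ coefficient in the discrete case involves $\beta_{k+1}$ (a forward step, coming from line \eqref{eq:energy function line 4}), after using \eqref{eq:G2}--\eqref{eq:G3} its leading part is $-2\delta\theta^2 k^{3r}\beta_k^2$ rather than the continuous $\theta(2r\theta t^{r-1}-1)t^{3r}\beta^2$.  Consequently the quadratic polynomial governing $B^2-AC$ has linear term $-4\theta(c+\delta\theta)t$, giving midpoint $(c+\delta\theta)/(2\theta)$, not $(c+1)/(2\theta)$.  The discriminant is then $16\theta^2[(c+\delta\theta)^2-(c+1-\delta\theta)^2]$, which is positive only if $\delta\theta>1-\delta\theta$, i.e.\ $\delta>1/(2\theta)$.  \emph{This} is where the tightened growth condition $\sup<1/(2\theta)$ is actually used, not merely in bookkeeping of the index shift.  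Without that correction your discriminant analysis would not close.
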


To produce the proof for this theorem, we first exhibit two intermediate results. 

\begin{lemma}\label{lem:discrete derivative of energy function}
    Under the same hypotheses of Theorem \ref{th:big O rates}, for large enough $k$ it holds
    \begin{align*}
        &\:\mathcal{E}_{\lambda}^{k + 1} - \mathcal{E}_{\lambda}^{k} \\
        \leq &\: \Biggl\{ 2\lambda^{2} \Bigl[ (k + 1)^{2(\rho - r)} - k^{2(\rho - r)}\Bigr] + 2\lambda (2\rho - r) \Bigl[ k^{r - 1} - (k + 1)^{r - 1}\Bigr] k^{2(\rho - r)} \\
        &\quad\quad+ 2\lambda \bigl(\alpha - (2\rho - r)(k + 1)^{r - 1} - \lambda\bigr) \Bigl[ (k + 1)^{2(\rho - r)} - k^{2(\rho - r)}\Bigr] \Biggr\} \| z^{k + 1} - z_{*}\|^{2} \\
        &+ \Biggl\{ 8\lambda (r - \rho) k^{2(\rho - r)} + 4\lambda \Bigl[ (k + 1)^{2(\rho - r)} - k^{2(\rho - r)}\Bigr](k + 1)^{r}\Biggr\} \bigl\langle z^{k + 1} - z_{*}, z^{k + 1} - z^{k}\bigr\rangle \\
        &+ 4\lambda k^{2\rho - r} \Bigl[ \bigl(2r\theta k^{r - 1} - 1\bigr)\beta_{k} + \theta k^{r} \bigl( \beta_{k} - \beta_{k - 1}\bigr)\Bigr] \langle z^{k + 1} - z_{*}, V(z^{k + 1})\rangle \\
        &+ 4k^{2\rho - r} \bigl( \lambda + rk^{r - 1} - \alpha\bigr)\bigl\| z^{k + 1} - z^{k}\bigr\|^{2} \\
        &+ \Biggl\{ k^{2(\rho - r )}\Bigl[ 2\eta_{k} (k + 1)^{r} + 2\theta \bigl( \lambda + rk^{r - 1} - \alpha\bigr)(k + 1)^{2r} \beta_{k} - 2\bigl( \lambda + rk^{r - 1} - \alpha\bigr)\eta_{k} \Bigr] \\
        &\quad\quad+ 2\theta\Bigl[ (k + 1)^{2(\rho - r)} - k^{2(\rho - r)}\Bigr](k + 1)^{3r} \beta_{k} +2\lambda\theta k^{2\rho}\beta_{k - 1} \Biggr\}\bigl\langle z^{k + 1} - z^{k}, V(z^{k + 1})\bigr\rangle \\
        &+ \Biggl\{ k^{2(\rho - r)} \left[\theta (k + 1)^{2r} \eta_{k}\beta_{k} - \frac{1}{2} \eta_{k}^{2}\right] + \frac{\theta^{2}}{2} \Bigl[ (k + 1)^{2(\rho - r)} - k^{2(\rho - r)}\Bigr](k + 1)^{4r}\beta_{k}^{2} \\
        &\quad\quad +\frac{\theta^{2}}{2}\Bigl[ (k + 2)^{2r}(k + 1)^{2\rho} \beta_{k + 1} \beta_{k} - (k + 1)^{2r} k^{2\rho} \beta_{k} \beta_{k - 1}\Bigr]\Biggr\} \| V(z^{k + 1})\|^{2} \\ 
        &+ \theta k^{2\rho} \beta_{k - 1} \eta_{k} \bigl\langle V(z^{k + 1}), V(z^{k + 1}) - V(z^{k})\bigr\rangle \\
        &- k^{2(\rho - r)} \frac{\theta^{2}}{2} \Bigl[ k^{4r} \beta_{k - 1}^{2} + (k + 1)^{2r} k^{2r} \beta_{k}\beta_{k - 1}\Bigr] \bigl\| V(z^{k + 1}) - V(z^{k})\bigr\|^{2}, 
    \end{align*}
    where 
    \[
        \eta_{k} := 2k^{r} \Bigl[ \bigl(2 r \theta k^{r - 1} - 1\bigr)\beta_{k} + \theta k^{r} \bigl( \beta_{k} - \beta_{k - 1}\bigr)\Bigr] - \theta \Bigl[ (k + 1)^{2r} \beta_{k} - k^{2r} \beta_{k - 1}\Bigr].
    \]
\end{lemma}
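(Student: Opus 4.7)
\textbf{Overall strategy.} The plan is to mirror the computation of $\frac{d}{dt}\mathcal{E}_\lambda(t)$ carried out in the proof of Theorem \ref{eq:main theorem continuous}, replacing time derivatives by forward differences $\mathcal{E}_\lambda^{k+1,i}-\mathcal{E}_\lambda^{k,i}$ of each of the four summands $i=1,2,3,4$ of the discrete energy, and finally substituting the recursion \eqref{eq:algorithm}. Because discrete differences produce extra second-order remainders, the identity I will systematically use on the quadratic terms is
\[
    \tfrac12\|a\|^2-\tfrac12\|b\|^2=\langle a-b,a\rangle-\tfrac12\|a-b\|^2,
\]
and its symmetric version $\tfrac12\|a\|^2-\tfrac12\|b\|^2=\langle a-b,b\rangle+\tfrac12\|a-b\|^2$, chosen in each case so that the leftover $\pm\tfrac12\|a-b\|^2$ combines with the cocoercivity-like term generated by the Hessian-driven piece.

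\textbf{Step 1: the norm-squared term \eqref{eq:energy function line 1}.} Set
\[
    \Phi^{k}:=2\lambda k^{\rho-r}(z^{k}-z_{*})+2k^{\rho}(z^{k}-z^{k-1})+\theta k^{\rho+r}\beta_{k-1}V(z^{k}).
\]
Using the first identity above on $\tfrac12\|\Phi^{k+1}\|^2-\tfrac12\|\Phi^{k}\|^2$, the increment $\Phi^{k+1}-\Phi^{k}$ splits into three pieces coming from (a) the shift $(z^{k}-z_{*})\to(z^{k+1}-z_{*})$ together with the coefficient change $k^{\rho-r}\to(k+1)^{\rho-r}$, (b) the velocity update $2(k+1)^{\rho}(z^{k+1}-z^{k})-2k^{\rho}(z^{k}-z^{k-1})$, and (c) the operator update $\theta(k+1)^{\rho+r}\beta_{k}V(z^{k+1})-\theta k^{\rho+r}\beta_{k-1}V(z^{k})$. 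This is where the algorithm recursion \eqref{eq:algorithm} will be substituted: it expresses the combination $2[\alpha-rk^{r-1}+(k+1)^{r}](z^{k+1}-z^{k})-2k^{r}(z^{k}-z^{k-1})$ in terms of $V(z^{k+1})$, $V(z^{k+1})-V(z^{k})$, and the coefficient $\eta_{k}$ (which is precisely the discrete analogue of $2t^r[(2r\theta t^{r-1}-1)\beta(t)+\theta t^r\dot\beta(t)]-\theta[\,\cdot\,]'$ appearing in \eqref{eq:first order reformulation}). The leftover $-\tfrac12\|\Phi^{k+1}-\Phi^{k}\|^2$ produces the Hessian-type remainder $-\tfrac{\theta^2}{2}k^{2(\rho-r)}[k^{4r}\beta_{k-1}^{2}+(k+1)^{2r}k^{2r}\beta_{k}\beta_{k-1}]\|V(z^{k+1})-V(z^{k})\|^2$ listed on the last line of the claim.

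\textbf{Step 2: the three remaining summands.} For $i=2$, the factor $2\lambda k^{2(\rho-r)}(\alpha-(2\rho-r)k^{r-1}-\lambda)$ is updated by writing
\[
    a_{k+1}\|u^{k+1}\|^2-a_{k}\|u^{k}\|^2=(a_{k+1}-a_{k})\|u^{k+1}\|^2+a_{k}(\|u^{k+1}\|^2-\|u^{k}\|^2),
\]
and expanding $\|u^{k+1}\|^2-\|u^{k}\|^2=2\langle u^{k+1}-u^{k},u^{k+1}\rangle-\|u^{k+1}-u^{k}\|^2$ with $u^{k}=z^{k}-z_{*}$, which produces the $\|z^{k+1}-z_*\|^2$ coefficients on the first line of the bound and the $\langle z^{k+1}-z_*,z^{k+1}-z^k\rangle$ coefficients on the second. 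For $i=3$, a similar splitting handles $\beta_{k-1}\to\beta_{k}$, $k^{2\rho}\to(k+1)^{2\rho}$, and $V(z^{k})\to V(z^{k+1})$, yielding both a $\langle z^{k+1}-z_*,V(z^{k+1})\rangle$ term and the $\lambda\theta k^{2\rho}\beta_{k-1}$ contribution appearing inside the curly bracket next to $\langle z^{k+1}-z^k,V(z^{k+1})\rangle$. For $i=4$, similarly splitting the scalar $(k+1)^{2r}k^{2\rho}\beta_{k}\beta_{k-1}$ and applying $\|V(z^{k+1})\|^2-\|V(z^{k})\|^2=2\langle V(z^{k+1})-V(z^{k}),V(z^{k+1})\rangle-\|V(z^{k+1})-V(z^{k})\|^2$ yields the remaining $\langle V(z^{k+1}),V(z^{k+1})-V(z^{k})\rangle$ term (with coefficient $\theta k^{2\rho}\beta_{k-1}\eta_{k}$ after collecting) and reinforces the negative $\|V(z^{k+1})-V(z^{k})\|^2$ remainder.

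\textbf{Step 3: bookkeeping and algebraic collection.} Adding the four pieces and grouping according to the five quadratic/bilinear monomials $\|z^{k+1}-z_{*}\|^2$, $\langle z^{k+1}-z_{*},z^{k+1}-z^{k}\rangle$, $\langle z^{k+1}-z_{*},V(z^{k+1})\rangle$, $\|z^{k+1}-z^{k}\|^2$, $\langle z^{k+1}-z^{k},V(z^{k+1})\rangle$ and $\|V(z^{k+1})\|^2$ reproduces the bound in the statement line by line. In this collection one must carefully use the cancellation
\[
    2(k+1)^{\rho}\cdot(k+1)^{\rho}\cdot\{\text{algorithm RHS}\}\;-\;(\text{terms from }i=2,3)
\]
which is the exact discrete counterpart of the identity that forced line \eqref{eq:d energy function r=s line 2} to vanish in the continuous proof; here it does not vanish identically but leaves only the telescoping discrepancies $(k+1)^{2(\rho-r)}-k^{2(\rho-r)}$ and $(k+1)^{r-1}-k^{r-1}$, exactly as they appear inside the first two braces of the stated upper bound.

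\textbf{Main obstacle.} The difficulty is not conceptual but purely combinatorial: unlike in the continuous case, where Leibniz and chain rules produce clean cancellations, here every coefficient must be expanded with the Abel-type identity $a_{k+1}b_{k+1}-a_{k}b_{k}=(a_{k+1}-a_{k})b_{k+1}+a_{k}(b_{k+1}-b_{k})$, and one has to decide consistently which of the two factors to ``freeze'' so that the final groupings match the stated six monomials without generating spurious cross terms. The coefficient $\eta_{k}$ is precisely the bookkeeping device that absorbs all residuals from the algorithm substitution; once it is introduced, no inequality is used in this lemma (all manipulations are equalities), and the claim follows by straightforward collection of terms.
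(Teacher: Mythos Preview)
Your overall strategy matches the paper's: compute $\mathcal{E}_\lambda^{k+1}-\mathcal{E}_\lambda^{k}$ line by line, apply the polarization identity $\tfrac12\|a\|^2-\tfrac12\|b\|^2=\langle a,a-b\rangle-\tfrac12\|a-b\|^2$ to the quadratic terms, and substitute the recursion \eqref{eq:algorithm} so that the residual operator coefficient collapses into $\eta_k$. The paper does exactly this, with one organizational difference: rather than working with your $\Phi^k$ (which carries the $k^{\rho-r}$ prefactor), the paper first strips off the $k^{\rho-r}$ and writes line \eqref{eq:energy function line 1} as $\tfrac12\|k^{\rho-r}u_\lambda^k\|^2$ with $u_\lambda^k:=2\lambda(z^k-z_*)+2k^r(z^k-z^{k-1})+\theta k^{2r}\beta_{k-1}V(z^k)$, then splits $\tfrac12\|(k+1)^{\rho-r}u_\lambda^{k+1}\|^2-\tfrac12\|k^{\rho-r}u_\lambda^k\|^2$ into $k^{2(\rho-r)}\bigl[\tfrac12\|u_\lambda^{k+1}\|^2-\tfrac12\|u_\lambda^k\|^2\bigr]$ plus $\tfrac12[(k+1)^{2(\rho-r)}-k^{2(\rho-r)}]\|u_\lambda^{k+1}\|^2$. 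This keeps the increment $u_\lambda^{k+1}-u_\lambda^k$ simple enough that the algorithm \eqref{eq:algorithm} plugs in directly; with your $\Phi^k$, the coefficient shifts $k^{\rho-r}\to(k+1)^{\rho-r}$ would be entangled with the iterate updates and the bookkeeping is messier.

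There is, however, a genuine gap in your final paragraph. You assert that ``no inequality is used in this lemma (all manipulations are equalities)''. This is false, and it is not a cosmetic issue: the lemma is stated with $\leq$, and the passage from the full identity (the paper's ten lines \eqref{eq:dE line 1}--\eqref{eq:dE line 10}) to the bound in the statement requires three inequality steps. First, the term $\langle z^{k+1}-z^k,\,V(z^{k+1})-V(z^k)\rangle$ appears in the identity with a coefficient that is a sum of nonpositive quantities for large $k$; it is dropped using the monotonicity of $V$. Second, the coefficient of $\langle z^{k+1}-z_*,\,V(z^{k+1})\rangle$ is simplified by discarding two occurrences of the nonpositive factor $(k+1)^{2(\rho-r)}-k^{2(\rho-r)}$ (recall $\rho\leq r$ and $\langle z^{k+1}-z_*,V(z^{k+1})\rangle\geq 0$). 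Third, the coefficient of $\|z^{k+1}-z^k\|^2$ is reduced to $4k^{2\rho-r}(\lambda+rk^{r-1}-\alpha)$ by using $(k+1)^r\geq k^r$, $\alpha-rk^{r-1}>0$, and again $(k+1)^{2(\rho-r)}-k^{2(\rho-r)}\leq 0$; these are the reasons the bound only holds ``for large enough $k$''. If you carry out Steps~1--3 as pure equalities you will arrive at an identity with more terms than the lemma claims, and you will not recognize when the argument is finished.
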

\begin{proof}
    First of all, just as in the continuous case, notice that $\mathcal{E}_{\lambda}^{k}$ is eventually nonnegative. Indeed, go back to line \eqref{eq:energy function line 2}. If $r\in (0, 1)$, then $0 < \lambda < \alpha$ together with the fact that $k^{r - 1}\to 0$ as $k\to +\infty$ gives $\alpha - (2\rho - r)k^{r - 1} - \lambda \geq 0$ for large enough $k$. If $r = 1$, then $\alpha - (2\rho - r)k^{r - 1} - \lambda \equiv \alpha - 1 - \lambda > 0$ by assumption. We will compute the difference $\mathcal{E}_{\lambda}^{k + 1} - \mathcal{E}_{\lambda}^{k}$ line by line. 

    Line \eqref{eq:energy function line 1}: Define, for each $k$,
    \[
        u_{\lambda}^{k} := 2\lambda (z^{k} - z_{*}) + 2k^{r} \bigl(z^{k} - z^{k - 1}\bigr) + \theta k^{2r} \beta_{k - 1}V(z^{k}).
    \]
    Notice that \eqref{eq:energy function line 1} can be written as 
    \[
        \frac{1}{2}\bigl\| k^{\rho - r} u_{\lambda}^{k}\bigr\|^{2}.
    \]
    We have 
    \begin{align}
        &\:\frac{1}{2}\bigl\| (k + 1)^{\rho - r} u_{\lambda}^{k + 1}\bigr\|^{2} - \frac{1}{2}\bigl\| k^{\rho - r} u_{\lambda}^{k}\bigr\|^{2} \nonumber\\
        = &\: k^{2(\rho - r)} \left[\frac{1}{2}\|u_{\lambda}^{k + 1}\|^{2} - \frac{1}{2}\| u_{\lambda}^{k}\|^{2}\right] + \frac{1}{2} \Bigl[ (k + 1)^{2(\rho - r)} - k^{2(\rho - r)}\Bigr] \| u_{\lambda}^{ k + 1}\|^{2} \nonumber\\
        = &\: k^{2(\rho - r)} \left[ \bigl\langle u_{\lambda}^{k + 1}, u_{\lambda}^{k + 1} - u_{\lambda}^{k}\bigr\rangle - \frac{1}{2} \bigl\| u_{\lambda}^{k + 1} - u_{\lambda}^{k}\bigr\|^{2}\right] + \frac{1}{2} \Bigl[ (k + 1)^{2(\rho - r)} - k^{2(\rho - r)}\Bigr] \| u_{\lambda}^{ k + 1}\|^{2}, \label{eq:difference of energy function line 1}
    \end{align}
    so we first compute $u_{\lambda}^{k + 1} - u_{\lambda}^{k}$: 
    \begin{align*}
        u_{\lambda}^{k + 1} - u_{\lambda}^{k} = & \: 2\lambda(z^{k + 1} - z_{*}) + 2(k + 1)^{r} \bigl( z^{k + 1} - z^{k}\bigr) + \theta (k + 1)^{2r} \beta_{k}V(z^{k + 1}) \\
        &- 2\lambda(z^{k} - z_{*}) - 2k^{r} \bigl( z^{k} - z^{k - 1}\bigr) - \theta k^{2r} \beta_{k - 1}V(z^{k}) \\
        = &\: 2\lambda \bigl( z^{k + 1} - z^{k}\bigr) + 2\Bigl[ (k + 1)^{r} \bigl( z^{k + 1} - z^{k}\bigr) - k^{r} \bigl( z^{k} - z^{k - 1}\bigr)\Bigr] \\
        &+ \theta \Bigl[ (k + 1)^{2r}\beta_{k} V(z^{k + 1}) - k^{2r} \beta_{k - 1}V(z^{k})\Bigr] \\
        = &\: 2\bigl( \lambda + rk^{r - 1} - \alpha\bigr)\bigl(z^{k + 1} - z^{k}\bigr) + 2 \Bigl[ \alpha - rk^{r - 1} + (k + 1)^{r}\Bigr]\bigl( z^{k + 1} - z^{k}\bigr) - 2k^{r}\bigl( z^{k} - z^{k - 1}\bigr) \\
        &+ \theta  \Bigl[ (k + 1)^{2r}\beta_{k} - k^{2r}\beta_{k - 1}\Bigr] V(z^{k + 1}) + \theta k^{2r} \beta_{k - 1}\Bigl[ V(z^{k + 1}) - V(z^{k})\Bigr] \\
        \text{use \eqref{eq:algorithm}} \:\: = &\: 2\bigl( \lambda + r k^{r - 1} - \alpha\bigr) \bigl(z^{k + 1} - z^{k}\bigr) + 2k^{r} \Bigl[\bigl(2r\theta k^{r - 1} - 1\bigr)\beta_{k} + \theta k^{r} \bigl(\beta_{k} - \beta_{k - 1}\bigr) \Bigr] V(z^{k + 1}) \\
        &-\theta \Bigl[ (k + 1)^{2r}\beta_{k} - k^{2r}\beta_{k - 1}\Bigr] V(z^{k + 1}) - \theta k^{2r} \beta_{k - 1}\Bigl[ V(z^{k + 1}) - V(z^{k})\Bigr] \\
        = &\: 2\bigl( \lambda + r k^{r - 1} - \alpha\bigr)\bigl( z^{k + 1} - z^{k}\bigr) -\theta k^{2r} \beta_{k - 1} \Bigl[ V(z^{k + 1}) - V(z^{k})\Bigr] \\
        &+ \underbrace{\Bigl\{ 2k^{r} \Bigl[\bigl( 2r\theta k^{r - 1} - 1\bigr)\beta_{k} + \theta k^{r}\bigl( \beta_{k} - \beta_{k - 1}\bigr) \Bigr] - \theta \Bigl[(k + 1)^{2r} - k^{2r}\Bigr]\Bigr\}}_{ = \eta_{k}} V(z^{k + 1}).
    \end{align*}
    With this at hand, we compute $\langle u_{\lambda}^{k + 1}, u_{\lambda}^{k + 1} - u_{\lambda}^{k}\rangle$: 
    \begin{align*}
        &\:\langle u_{\lambda}^{k + 1}, u_{\lambda}^{k + 1} - u_{\lambda}^{k}\rangle \\
        = &\: \Bigl\langle 2\lambda (z^{k + 1} - z_{*}) + 2(k + 1)^{r} \bigl( z^{k + 1} - z^{k}\bigr) + \theta (k + 1)^{2r} \beta_{k} V(z^{k + 1}), \\
        &\quad\quad 2\bigl( \lambda + r k^{r - 1} - \alpha\bigr) \bigl( z^{k + 1} - z^{k}\bigr) + \eta_{k} V(z^{k + 1}) - \theta k^{2r} \beta_{k - 1}\Bigl[ V(z^{k + 1}) - V(z^{k})\Bigr] \Bigr\rangle \\
        = &\: 4\lambda \bigl(\lambda + r k^{r - 1} - \alpha\bigr) \bigl\langle z^{k + 1} - z_{*}, z^{k + 1} - z^{k}\bigr\rangle + 2\lambda \eta_{k} \langle z^{k + 1} - z_{*}, V(z^{k + 1})\rangle \\
        &- 2\lambda\theta k^{2r} \beta_{k - 1} \bigl\langle z^{k + 1} - z_{*}, V(z^{k + 1}) - V(z^{k})\bigr\rangle + 4\bigl( \lambda + r k^{r - 1} - \alpha\bigr) (k + 1)^{r} \bigl\| z^{k + 1} - z^{k}\bigr\|^{2} \\
        &+ \Bigl[ 2\eta_{k} (k + 1)^{r} + 2\theta \bigl( \lambda + r k^{r - 1} - \alpha\bigr)(k + 1)^{2r} \beta_{k}\Bigr]\bigl\langle z^{k + 1} - z^{k}, V(z^{k + 1})\bigr\rangle \\
        &-2\theta (k + 1)^{r} k^{2r} \beta_{k - 1}\bigl\langle z^{k + 1} - z^{k}, V(z^{k + 1}) - V(z^{k})\bigr\rangle + \theta (k + 1)^{2r} \eta_{k} \beta_{k}\| V(z^{k + 1})\|^{2} \\
        &- \theta^{2} (k + 1)^{2r} k^{2r} \beta_{k} \beta_{k - 1}\bigl\langle V(z^{k + 1}), V(z^{k + 1}) - V(z^{k})\bigr\rangle. 
    \end{align*}
    Now, $\|u_{\lambda}^{k + 1} - u_{\lambda}^{k}\|^{2}$:
    \begin{align*}
        \:\| u_{\lambda}^{k + 1} - u_{\lambda}^{k}\|^{2} = &\: \Bigl\| 2\bigl(\lambda + r k^{r - 1} - \alpha\bigr)\bigl( z^{k + 1} - z^{k}\bigr) + \eta_{k} V(z^{k + 1}) - \theta k^{2r} \beta_{k - 1}\Bigl[ V(z^{k + 1}) - V(z^{k})\Bigr]\Bigr\|^{2} \\
        = &\: 4\bigl( \lambda + r k^{r - 1} - \alpha\bigr)^{2} \bigl\| z^{k + 1} - z^{k}\bigr\|^{2} + \eta_{k}^{2} \| V(z^{k + 1})\|^{2} \\
        &+ \theta^{2} k^{4r} \beta_{k - 1}^{2} \bigl\| V(z^{k + 1}) - V(z^{k})\bigr\|^{2} + 4\bigl( \lambda + r k^{r - 1} - \alpha\bigr) \eta_{k} \bigl\langle z^{k + 1} - z^{k}, V(z^{k + 1})\bigr\rangle \\
        &- 4\theta \bigl( \lambda + r k^{r - 1} - \alpha\bigr) k^{2r} \beta_{k - 1}\bigl\langle z^{k + 1} - z^{k}, V(z^{k + 1}) - V(z^{k})\bigr\rangle \\
        &- 2\theta \eta_{k} k^{2r} \beta_{k - 1} \bigl\langle V(z^{k + 1}), V(z^{k + 1}) - V(z^{k})\bigr\rangle. 
    \end{align*}
    Now, recalling \eqref{eq:difference of energy function line 1}, we put everything together:
    \begin{align*}
        &\:\frac{1}{2}\bigl\| (k + 1)^{\rho - r} u_{\lambda}^{k + 1}\bigr\|^{2} - \frac{1}{2}\bigl\| k^{\rho - r} u_{\lambda}^{k}\bigr\|^{2} \\
        = &\: k^{2(\rho - r)} \Biggl\{ 4\lambda \bigl( \lambda + r k^{r - 1} - \alpha\bigr) \bigl\langle z^{k + 1} - z_{*}, z^{k + 1} - z^{k}\bigr\rangle + 2\lambda \eta_{k} \langle z^{k + 1} - z_{*}, V(z^{k + 1})\rangle \\
        &\quad\quad - 2\lambda\theta k^{2r} \beta_{k - 1} \bigl\langle z^{k + 1} - z_{*}, V(z^{k + 1}) - V(z^{k})\bigr\rangle \\
        &\quad\quad+ \Bigl[ 4\bigl( \lambda + r k^{r - 1} - \alpha\bigr)(k + 1)^{r} - 2\bigl( \lambda + rk^{r - 1} - \alpha\bigr)^{2}\Bigr] \bigl\| z^{k + 1} - z^{k}\bigr\|^{2} \\
        &\quad\quad+ \Bigl[ 2\eta_{k} (k + 1)^{r} + 2\theta \bigl( \lambda + r k^{r - 1} - \alpha\bigr)(k + 1)^{2r} \beta_{k} - 2\bigl( \lambda + r k^{r - 1} - \alpha\bigr)\eta_{k}\Bigr] \bigl\langle z^{k + 1} - z^{k}, V(z^{k + 1})\bigr\rangle \\
        &\quad\quad+ \Bigl[ -2\theta (k + 1)^{r} k^{2r}\beta_{k - 1} + 2\theta \bigl( \lambda + r k^{r - 1} - \alpha\bigr) k^{2r} \beta_{k - 1}\Bigr] \bigl\langle z^{k + 1} - z^{k}, V(z^{k + 1}) - V(z^{k})\bigr\rangle \\
        &\quad\quad+ \left[ \theta(k + 1)^{2r} \eta_{k} \beta_{k} - \frac{1}{2}\eta_{k}^{2}\right] \| V(z^{k + 1})\|^{2} -\frac{\theta^{2}}{2} k^{4r} \beta_{k - 1}^{2} \bigl\| V(z^{k + 1}) - V(z^{k})\bigr\|^{2} \\
        &\quad\quad+ \Bigl[ -\theta^{2}(k + 1)^{2r}k^{2r}\beta_{k} \beta_{k - 1} + \theta k^{2r}\eta_{k} \beta_{k - 1}\Bigr] \bigl\langle V(z^{k + 1}), V(z^{k + 1}) - V(z^{k})\bigr\rangle \Biggr\} \\
        &+ \frac{1}{2} \Bigl[ (k + 1)^{2(\rho - r)} - k^{2(\rho - r)}\Bigr] \Biggl\{  4\lambda^{2} \| z^{k + 1} \! - z_{*}\|^{2} \! + \! 4(k + 1)^{r} \bigl\| z^{k + 1} \! - z^{k}\bigr\|^{2} \! + \!\theta^{2} (k + 1)^{4r} \beta_{k}^{2} \| V(z^{k + 1})\|^{2} \\
        &\quad\quad + 8\lambda (k + 1)^{r} \bigl\langle z^{k + 1} - z_{*}, z^{k + 1} - z^{k}\bigr\rangle + 4\lambda\theta (k + 1)^{2r} \beta_{k}\langle z^{k + 1} - z_{*}, V(z^{k + 1})\rangle \\
        &\quad\quad+ 4\theta (k + 1)^{3r} \beta_{k} \langle z^{k + 1} - z_{k}, V(z^{k + 1})\rangle \Biggr\}. 
    \end{align*}

    Line \eqref{eq:energy function line 2}: we have 
    \begin{align*}
        &\:2\lambda (k \! + \! 1)^{2(\rho - r)} \bigl( \alpha \! - \! (2\rho \! - \! r) (k \! + \! 1)^{r - 1} \! - \lambda\bigr) \| z^{k + 1} \! - z_{*}\|^{2} \! - 2\lambda k^{2(\rho -  r)} \bigl( \alpha \! - \!(2\rho \! - \! r) k^{r - 1} \! - \lambda\bigr) \| z^{k} \! - z_{*}\|^{2} \\
        = &\:k^{2(\rho - r)} \Biggl\{ 2\lambda (2\rho - r) \Bigl[ k^{r - 1} - r (k + 1)^{r - 1} \Bigr] \| z^{k + 1} - z_{*}\|^{2}  - 2\lambda \bigl( \alpha - (2\rho - r)k^{r - 1} - \lambda\bigr)\bigl\| z^{k + 1} - z^{k}\bigr\|^{2}\\
        &\qquad\qquad + 4\lambda \bigl( \alpha - (2\rho - r) k^{r - 1} - \lambda\bigr) \bigl\langle z^{k + 1} - z_{*}, z^{k + 1} - z^{k + 1} - z^{k}\bigr\rangle \Biggr\}\\
        &+ 2\lambda \Bigl[ (k + 1)^{2(\rho - r)} - k^{2(\rho - r)}\Bigr] \left( \alpha - (2\rho - r)(k + 1)^{r - 1} - \lambda \right) \left\lVert z^{k + 1} - z_{*} \right\rVert ^{2}. 
    \end{align*}

    Line \eqref{eq:energy function line 3}: we can write 
    \begin{align*}
        &\:2\lambda \theta (k + 1)^{2\rho} \beta_{k}\bigl\langle z^{k + 1} - z_{*}, V(z^{k + 1})\bigr\rangle - 2\lambda\theta k^{2\rho} \beta_{k - 1} \bigl\langle z^{k} - z_{*}, V(z^{k})\bigr\rangle \\
        = &\: 2\lambda\theta \Bigl[\! (k \! + \! 1)^{2\rho}\beta_{k} \! - \! k^{2\rho}\beta_{k - 1} \!\Bigr] \! \bigl\langle z^{k + 1} \! - z_{*}, V(z^{k + 1})\bigr\rangle \! + \! 2\theta k^{2\rho} \beta_{k - 1} \! \Bigl[ \! \bigl\langle z^{k + 1} \! - z_{*}, V(z^{k + 1})\bigr\rangle \! - \! \bigl\langle z^{k} \! - z_{*}, V(z^{k})\bigr\rangle \!\Bigr] \\
        = &\: 2\lambda\theta \Bigl[ (k + 1)^{2\rho}\beta_{k} - k^{2\rho}\beta_{k - 1}\Bigr] \bigl\langle z^{k + 1} - z_{*}, V(z^{k + 1})\bigr\rangle + 2\lambda\theta k^{2\rho} \beta_{k - 1}\bigl\langle z^{k + 1} - z_{*}, V(z^{k + 1}) - V(z^{k})\bigr\rangle \\
        &+ 2\lambda\theta k^{2\rho} \beta_{k - 1}\bigl\langle z^{k + 1} - z^{k}, V(z^{k})\bigr\rangle \\
        = &\: 2\lambda\theta \Bigl[ (k + 1)^{2\rho}\beta_{k} - k^{2\rho}\beta_{k - 1}\Bigr] \langle z^{k + 1} - z_{*}, V(z^{k + 1})\rangle + 2\lambda\theta k^{2\rho} \beta_{k - 1}\bigl\langle z^{k + 1} - z_{*}, V(z^{k + 1}) - V(z^{k})\bigr\rangle \\
        &- 2\lambda\theta k^{2\rho} \beta_{k - 1}\bigl\langle z^{k + 1} - z^{k}, V(z^{k + 1}) - V(z^{k})\bigr\rangle + 2\lambda\theta k^{2\rho} \beta_{k - 1}\bigl\langle z^{k + 1} - z^{k}, V(z^{k + 1})\bigr\rangle.
    \end{align*}

    Line \eqref{eq:energy function line 4}: we obtain 
    \begin{align*}
        &\:\frac{\theta^{2}}{2} \Bigl[ (k + 2)^{2r} (k + 1)^{2\rho} \beta_{k + 1}\beta_{k} \| V(z^{k + 1})\|^{2} - (k + 1)^{2r} k^{2\rho} \beta_{k} \beta_{k - 1} \| V(z^{k})\|^{2}\Bigr] \\
        = &\: \frac{\theta^{2}}{2}\Bigl[(k + 2)^{2r} (k + 1)^{2\rho} \beta_{k + 1}\beta_{k} - (k + 1)^{2r} k^{2\rho} \beta_{k} \beta_{k - 1} \Bigr] \| V(z^{k + 1})\|^{2} \\
        &+ \frac{\theta^{2}}{2} (k + 1)^{2r} k^{2\rho} \beta_{k} \beta_{k - 1} \Bigl[ \| V(z^{k + 1})\|^{2} - \| V(z^{k})\|^{2}\Bigr] \\
        = &\: \frac{\theta^{2}}{2} \Bigl[ (k + 2)^{2r} (k + 1)^{2\rho} \beta_{k + 1} \beta_{k} - (k + 1)^{2r} k^{2\rho} \beta_{k} \beta_{k - 1}\Bigr] \| V(z^{k + 1})\|^{2} \\
        &+ (k + 1)^{2r} k^{2\rho} \beta_{k} \beta_{k - 1} \left[ \bigl\langle V(z^{k + 1}), V(z^{k + 1}) - V(z^{k})\bigr\rangle - \frac{1}{2} \bigl\| V(z^{k + 1}) - V(z^{k})\bigr\|^{2}\right] \\
        = &\: \frac{\theta^{2}}{2} \Bigl[ (k + 2)^{2r} (k + 1)^{2\rho} \beta_{k + 1} \beta_{k} - (k + 1)^{2r} k^{2\rho} \beta_{k} \beta_{k - 1}\Bigr] \| V(z^{k + 1})\|^{2} \\
        &+ \theta^{2} (k \! + \! 1)^{2r} k^{2\rho} \beta_{k} \beta_{k - 1} \bigl\langle V(z^{k + 1}), V(z^{k + 1}) - V(z^{k})\bigr\rangle - \frac{\theta^{2}}{2}(k \! + \! 1)^{2r} k^{2\rho} \beta_{k} \beta_{k - 1} \bigl\| V(z^{k + 1}) - V(z^{k})\bigr\|^{2}.
    \end{align*}

    Now, we combine everything: 
    \begin{align}
        &\:\mathcal{E}_{\lambda}^{k + 1} - \mathcal{E}_{\lambda}^{k} \nonumber\\
        = &\: \Biggl\{ 2\lambda^{2} \Bigl[ (k + 1)^{2(\rho - r)} - k^{2(\rho - r)}\Bigr] + 2\lambda (2\rho - r) k^{2(\rho - r)}\Bigl[ k^{r - 1} - (k + 1)^{r - 1}\Bigr] \nonumber\\
        &\quad\quad + 2\lambda \Bigl[ (k + 1)^{2(\rho - r)} - k^{2(\rho - r)}\Bigr]\bigl( \alpha - (2\rho - r)(k + 1)^{r - 1} - \lambda\bigr) \Biggr\} \| z^{k + 1} - z_{*}\|^{2} \label{eq:dE line 1}\\
        &+ \Biggl\{ 4\lambda k^{2(\rho - r)}\bigl( \lambda + r k^{r - 1} - \alpha\bigr) + 4\lambda k^{2(\rho - r)} \bigl( \alpha - (2\rho - r) k^{r - 1} - \lambda\bigr) \nonumber\\
        &\quad\quad + 4\lambda \Bigl[ (k + 1)^{2(\rho - r)} - k^{2(\rho - r)}\Bigr](k + 1)^{r}\Biggr\} \bigl\langle z^{k + 1} - z_{*}, z^{k + 1} - z^{k}\bigr\rangle \label{eq:dE line 2}\\
        &+ \Biggl\{ 2\lambda k^{2(\rho - r)} \eta_{k} + 2\lambda\theta \Bigl[ (k + 1)^{2\rho}\beta_{k} - k^{2\rho}\beta_{k - 1}\Bigr] \nonumber\\
        &\quad\quad+ 2\lambda\theta \Bigl[ (k + 1)^{2(\rho - r)} - k^{2(\rho - r)}\Bigr](k + 1)^{2r}\beta_{k} \Biggr\} \langle z^{k + 1} - z_{*}, V(z^{k + 1})\rangle \label{eq:dE line 3}\\
        &+ \Bigl[k^{2(\rho - r)} \bigl(-2\lambda\theta k^{2r}\beta_{k - 1}\bigr) + 2\lambda\theta k^{2\rho} \beta_{k - 1}\Bigr] \bigl\langle z^{k + 1} - z_{*}, V(z^{k + 1}) - V(z^{k})\bigr\rangle \label{eq:dE line 4}\\
        &+ \Biggl\{ k^{2(\rho - r)} \Bigl[4\bigl(\lambda + r k^{r - 1} - \alpha\bigr)(k + 1)^{r} - 2\bigl( \lambda + r k^{r - 1} - \alpha\bigr)^{2}\Bigr] + 2\lambda \Bigl[ (k + 1)^{2(\rho - r)} - k^{2(\rho - r)}\Bigr] \nonumber\\
        &\quad\quad- 2\lambda k^{2(\rho - r)} \bigl(\alpha - (2\rho - r) k^{r - 1} - \lambda\bigr)\Biggr\} \bigl\| z^{k + 1} - z^{k}\bigr\|^{2} \label{eq:dE line 5}\\
        &+ \Biggl\{ k^{2(\rho - r)}\Bigl[ 2\eta_{k} (k + 1)^{r} + 2\theta \bigl( \lambda + rk^{r - 1} - \alpha\bigr)(k + 1)^{2r} \beta_{k} - 2\bigl( \lambda + rk^{r - 1} - \alpha\bigr)\eta_{k}\Bigr] \nonumber\\
        &\quad\quad + 2\theta \Bigl[ (k + 1)^{2(\rho - r)} - k^{2(\rho - r)}\Bigr](k + 1)^{3r} \beta_{k} + 2\lambda\theta k^{2\rho} \beta_{k - 1}\Biggr\} \bigl\langle z^{k + 1} - z^{k}, V(z^{k + 1})\bigr\rangle \label{eq:dE line 6}\\
        &+ \Biggl\{ k^{2(\rho - r)}\Bigl[ -2\theta (k + 1)^{r} k^{2r} \beta_{k - 1} + 2\theta \bigl( \lambda + rk^{r - 1} - \alpha\bigr) k^{2r}\beta_{k - 1} \Bigr] \nonumber\\
        &\quad\quad- 2\lambda\theta k^{2\rho} \beta_{k - 1}\Biggr\} \bigl\langle z^{k + 1}  - z^{k}, V(z^{k + 1}) - V(z^{k})\bigr\rangle \label{eq:dE line 7}\\
        &+ \Biggl\{ k^{2(\rho - r)} \left[\theta (k + 1)^{2r} \eta_{k} \beta_{k} - \frac{1}{2} \eta_{k}^{2}\right] + \frac{\theta^{2}}{2}\Bigl[ (k + 1)^{2(\rho - r)} - k^{2(\rho - r)}\Bigr](k + 1)^{4r}\beta_{k}^{2} \nonumber\\
        &\quad\quad + \frac{\theta^{2}}{2} \Bigl[ (k + 2)^{2r} (k + 1)^{2\rho} \beta_{k + 1} \beta_{k} - (k + 1)^{2r} k^{2\rho} \beta_{k} \beta_{k - 1}\Bigr]\Biggr\} \| V(z^{k + 1})\|^{2} \label{eq:dE line 8}\\
        &+ \Biggl\{ k^{2(\rho - r)}\Bigl[ -\theta^{2} (k + 1)^{2r} k^{2r} \beta_{k} \beta_{k - 1} + \theta k^{2r} \eta_{k} \beta_{k - 1}\Bigr] \nonumber\\
        &\quad\quad+ \theta^{2} (k + 1)^{2r} k^{2\rho} \beta_{k} \beta_{k - 1} \Biggr\} \bigl\langle V(z^{k + 1}), V(z^{k + 1}) - V(z^{k})\bigr\rangle \label{eq:dE line 9}\\
        &+ \left[ k^{2(\rho - r)} \left(-\frac{\theta^{2}}{2}k^{4r} \beta_{k - 1}^{2}\right) - \frac{\theta^{2}}{2} (k + 1)^{2r} k^{2\rho} \beta_{k}\beta_{k - 1}\right] \bigl\| V(z^{k + 1}) - V(z^{k})\bigr\|^{2}. \label{eq:dE line 10}
    \end{align}
    We analyze each line separately. Line \eqref{eq:dE line 2}: evidently, we have 
    \[
        4\lambda \bigl( \lambda + rk^{r - 1} - \alpha\bigr) + 4\lambda\bigl( \alpha - (2\rho - r)k^{r - 1} - \lambda\bigr) + 4\lambda \Bigl[ (k + 1)^{2(\rho - r)} - k^{2(\rho - r)}\Bigr](k + 1)^{r} = 8\lambda (r - \rho) k^{2\rho - r - 1}.
    \]

    Line \eqref{eq:dE line 3}: recalling the definition for $\eta_{k}$, and the fact that $(k + 1)^{2(\rho - r)} - k^{2(\rho - r)} \leq 0$ (since $2(\rho - r)\leq 0$), the coefficient attached to $\langle z^{k + 1} - z_{*}, V(z^{k + 1})\rangle$ is less or equal than 
    \begin{align*}
        &\:2\lambda k^{2(\rho - r)}\eta_{k} + 2\lambda\theta \Bigl[ (k + 1)^{2\rho}\beta_{k} - k^{2\rho} \beta_{k - 1}\Bigr] \\
        = &\: 2\lambda k^{2(\rho - r)} \eta_{k} + 2\lambda\theta k^{2(\rho - r)} \Bigl[ (k + 1)^{2r}\beta_{k} - k^{2r}\beta_{k}\Bigr] \\
        &+ 2\lambda\theta \Bigl\{ \Bigl[ (k + 1)^{2\rho}\beta_{k} - k^{2\rho}\beta_{k - 1}\Bigr] - k^{2(\rho - r)}\Bigl[(k + 1)^{2r}\beta_{k} - k^{2r}\beta_{k}\Bigr]\Bigr\} \\
        = &\: 2\lambda k^{2(\rho - r)} \Bigl\{2k^{r} \Bigl[ \bigl(2r\theta k^{r - 1} - 1\bigr)\beta_{k} + \theta k^{r} \bigl( \beta_{k} - \beta_{k - 1}\bigr)\Bigr] - \theta \Bigl[(k + 1)^{2r}\beta_{k} - k^{2r}\beta_{k - 1}\Bigr]\Bigr\} \\
        &+ 2\lambda\theta k^{2(\rho - r)}\Bigl[ (k + 1)^{2r}\beta_{k} - k^{2r}\beta_{k - 1}\Bigr] + 2\lambda\theta \Bigl[ (k + 1)^{2(\rho - r)} - k^{2(\rho - r)}\Bigr] (k + 1)^{2r}\beta_{k} \\
        \leq &\: 4\lambda k^{2\rho - r} \Bigl[\bigl( 2r\theta k^{r - 1} - 1\bigr)\beta_{k} + \theta k^{r} \bigl(\beta_{k} - \beta_{k - 1}\bigr) \Bigr], 
    \end{align*}
    again using that $(k + 1)^{2(\rho - r)} - k^{2(\rho - r)}\leq 0$. 

    Line \eqref{eq:dE line 4}: we have 
    \[
        k^{2(\rho - r)} \bigl(-2\lambda\theta k^{2r}\bigr) + 2\lambda\theta k^{2\rho} = -2\lambda\theta k^{2\rho} + 2\lambda\theta k^{2\rho} = 0. 
    \]

    Line \eqref{eq:dE line 5}: the coefficient accompanying $\bigl\| z^{k + 1} - z^{k}\bigr\|^{2}$ is less than or equal to 
    \begin{align*}
        &\:k^{2(\rho - r)} \Bigl\{ 4\bigl( \lambda + r k^{r - 1} - \alpha\bigr)(k + 1)^{r} - 2\bigl( \lambda + r k^{r - 1} - \alpha\bigr)^{2} - 2\lambda \bigl(\alpha - (2\rho - r) k^{r - 1} - \lambda\bigr) \Bigr\} \\
        = &\: k^{2(\rho - r)}\bigl( \lambda + rk^{r - 1} - \alpha\bigr) \Bigl[ 4(k + 1)^{r} - 2\lambda - 2rk^{r - 1} + 2\alpha + 2\lambda\Bigr] \\
        = &\: k^{2(\rho - r)} \bigl( \lambda + rk^{r - 1} - \alpha\bigr) \Bigl[ 4(k + 1)^{r} + 2(\alpha - rk^{r - 1})\Bigr] \\
        \leq &\: 4k^{2\rho - r} \bigl( \lambda + rk^{r - 1} - \alpha\bigr), 
    \end{align*} 
    where we used the fact that eventually $\lambda + rk^{r - 1} - \alpha \leq 0$ (argue like at the beginning of the proof), together with $4(k + 1)^{r} \geq 4k^{r}$ and $\alpha - rk^{r - 1} > 0$ for large enough $k$. 

    Line \eqref{eq:dE line 7}: we notice that   
    \[
        k^{2(\rho - r)}\Bigl[-2\theta (k + 1)^{r} k^{2r} + 2\theta \bigl( \lambda + r k^{r - 1} - \alpha\bigr)k^{2r} \beta_{k - 1} - 2\lambda\theta k^{2r} \beta_{k - 1}\Bigr] - 2\lambda\theta k^{2\rho}\beta_{k - 1}
    \]
    is the sum of nonpositive terms for large enough $k$. Since $V$ is monotone, $\bigl\langle z^{k + 1} - z^{k}, V(z^{k + 1}) - V(z^{k})\bigr\rangle \geq 0$ for every $k$. 

    Line \eqref{eq:dE line 9}: it holds 
    \[
        k^{2(\rho - r)}\Bigl[ -\theta^{2}(k + 1)^{2r} k^{2r} \beta_{k} \beta_{k - 1} + \theta k^{2r} \eta_{k} \beta_{k - 1} \Bigr] + \theta^{2} (k + 1)^{2r} k^{2\rho} \beta_{k} \beta_{k - 1} = \theta k^{2\rho} \beta_{k - 1} \eta_{k}.
    \]
    This proves the lemma. 
\end{proof}

In the following lemma, we work with the coefficients attached to $\| z^{k + 1} - z_{*}\|^{2}$, $\bigl\langle z^{k + 1} - z_{*}, z^{k + 1} - z^{k}\bigr\rangle$, $\bigl\langle z^{k + 1} - z^{k}, V(z^{k + 1})\bigr\rangle$ and $\| V(z^{k + 1})\|^{2}$, which require further analysis. The proof is quite technical and lengthy; for the sake of readability, we have moved it to the Appendix.
\begin{lemma}\label{lem:coefs of <z(k+1)-z(k),V(z(k+1))> and |V(z(k+1))|}
    Consider the upper bound for $\mathcal{E}_{\lambda}^{k + 1} - \mathcal{E}_{\lambda}^{k}$ provided by Lemma \ref{lem:discrete derivative of energy function}. The following hold for large enough $k$:
    \begin{enumerate}[\rm (i)]
        \item The coefficient attached to $\| z^{k + 1} - z_{*}\|^{2}$ is bounded above by 
        \[
            4\lambda(\rho - r)k^{2(\rho - r) - 1} + O_{k}, 
        \]
        where $O_{k} = \mathcal{O}(k^{2(\rho - r) + r - 2})$ as $k\to +\infty$. 
        \item The coefficient attached to $\bigl\langle z^{k + 1} - z_{*}, z^{k + 1} - z^{k}\bigr\rangle$ can be written as $2 P_{k}$, with $P_{k} = \mathcal{O}(k^{2\rho - r - 2})$ as $k\to +\infty$.
        \item The coefficient attached to $\bigl\langle z^{k + 1} - z^{k}, V(z^{k + 1})\bigr\rangle$ can be written as 
        \[
            k^{2(\rho - r)} \Biggl\{ 2k^{2r} \Bigl\{ \Bigl[ 2\theta \bigl( \lambda + rk^{r - 1} - \alpha\bigr) + \bigl( \theta \alpha + \theta rk^{r - 1} - 2\bigr)\Bigr] \beta_{k} + \theta k^{r} \bigl(\beta_{k} - \beta_{k - 1}\bigr)\Bigr\} + 2(Q_{1, k} + Q_{2, k})\beta_{k}\Biggr\}, 
        \]
        where $|Q_{1, k}| = \mathcal{O}(k^{r})$ and $|Q_{2, k}| = \mathcal{O}(k^{3r - 1})$ as $k\to +\infty$. 
        \item The coefficient attached to $\| V(z^{k + 1})\|^{2}$ is bounded above by
        \[
            k^{2(\rho - r)} \Bigl[-2\delta \theta^{2} k^{3r} \beta_{k}^{2} + R_{k} \beta_{k}^{2}\Bigr], 
        \]
        where $R_{k} = o(k^{3r})$ as $k\to +\infty$.  
    \end{enumerate}
\end{lemma}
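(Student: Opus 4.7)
The plan is to push through each of the four coefficients with a Taylor-style asymptotic expansion. The two recurring moves are (a) the identity $(k+1)^{a} - k^{a} = a k^{a-1} + \mathcal{O}(k^{a-2})$ for any $a \in \mathbb{R}$, and (b) the splitting
\[
(k+1)^{a}\beta_{k} - k^{a}\beta_{k-1} = \bigl[(k+1)^{a} - k^{a}\bigr]\beta_{k} + k^{a}(\beta_{k} - \beta_{k-1}),
\]
so that every $\beta$-increment is isolated and can then be controlled through the inequalities \eqref{eq:G1}--\eqref{eq:G3} of Remark \ref{rem:growth condition}.

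For part (i), I would first factor out the common difference $[(k+1)^{2(\rho-r)} - k^{2(\rho-r)}]$ appearing in the first and third summands, producing $2\lambda\bigl[\alpha - (2\rho-r)(k+1)^{r-1}\bigr]\bigl[(k+1)^{2(\rho-r)} - k^{2(\rho-r)}\bigr]$ plus the stray $2\lambda(2\rho-r)[k^{r-1} - (k+1)^{r-1}]k^{2(\rho-r)}$. Applying (a) to each difference extracts the leading $k^{2(\rho-r)-1}$ contribution (giving, after cancellations, the stated $4\lambda(\rho-r)k^{2(\rho-r)-1}$ up to the absorbed constant) and collects everything else into the remainder $O_{k}=\mathcal{O}(k^{2(\rho-r)+r-2})$. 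Part (ii) is entirely analogous and requires no $\beta$-manipulation: only (a) is invoked to estimate $(k+1)^{2(\rho-r)} - k^{2(\rho-r)}$ together with the benign factor $(k+1)^{r}$.

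Part (iii) is where $\eta_{k}$ enters. Substituting the definition
\[
\eta_{k} = 2k^{r}\bigl[(2r\theta k^{r-1} - 1)\beta_{k} + \theta k^{r}(\beta_{k} - \beta_{k-1})\bigr] - \theta\bigl[(k+1)^{2r}\beta_{k} - k^{2r}\beta_{k-1}\bigr]
\]
into the coefficient of $\bigl\langle z^{k+1} - z^{k}, V(z^{k+1})\bigr\rangle$ and applying the splitting (b) to the last bracketed term rearranges the leading $k^{2r}\beta_{k}$ block precisely into $\bigl[2\theta(\lambda + rk^{r-1} - \alpha) + (\theta\alpha + \theta rk^{r-1} - 2)\bigr]\beta_{k} + \theta k^{r}(\beta_{k} - \beta_{k-1})$, which is the direct discrete analogue of line \eqref{eq:d general energy function line 4 for quad}. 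The leftover pieces, stemming either from $(k+1)^{r} - k^{r}$ or from the second-order defect $(k+1)^{2r} - k^{2r} - 2rk^{2r-1}$, collect into $Q_{1,k} = \mathcal{O}(k^{r})$ and $Q_{2,k}=\mathcal{O}(k^{3r-1})$ respectively.

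The main obstacle is part (iv). Here one must expand the two-step product difference $(k+2)^{2r}(k+1)^{2\rho}\beta_{k+1}\beta_{k} - (k+1)^{2r}k^{2\rho}\beta_{k}\beta_{k-1}$, the $\eta_{k}^{2}$ term, and the cross term $\theta(k+1)^{2r}\eta_{k}\beta_{k}$. My strategy is to group everything proportional to $\beta_{k}^{2}$ and apply \eqref{eq:G2} repeatedly to convert each remaining $\theta k^{r}(\beta_{k} - \beta_{k-1})$ factor into $(1 - 2r\theta k^{r-1} - \delta\theta)\beta_{k}$; the strict gap $\delta$ then surfaces as a coefficient $-2\delta\theta^{2}k^{3r}$ on the leading $\beta_{k}^{2}$ term, exactly mirroring the negativity of $\pi(t)$ in the continuous proof. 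All lower-order corrections, including the $\beta_{k+1}$-to-$\beta_{k}$ adjustment via $\beta_{k+1} \leq M_{\beta}\beta_{k}$ from \eqref{eq:G3}, are absorbed into $R_{k} = o(k^{3r})$. The bookkeeping is long but mechanical once these rewriting tricks are fixed, which is why the full computation is deferred to the appendix.
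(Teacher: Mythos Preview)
Your plan for parts (i)--(iii) is essentially the paper's own argument: Taylor-expand the power differences, isolate $\beta$-increments, and collect remainders. No issues there.

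For part (iv) your sketch has a real gap. You say the $\beta_{k+1}\to\beta_{k}$ adjustment via $\beta_{k+1}\le M_{\beta}\beta_{k}$ can be ``absorbed into $R_{k}=o(k^{3r})$''. It cannot: the term coming from $(k+2)^{2r}(k+1)^{2\rho}\beta_{k+1}\beta_{k}$ contributes at leading order $k^{3r}\beta_{k}^{2}$, and after replacing $\beta_{k+1}$ by $M_{\beta}\beta_{k}$ the extra factor $(M_{\beta}-1)$ sits on a $k^{3r}\beta_{k}^{2}$ term, not a lower-order one. If you apply \eqref{eq:G2} with the fixed $\delta$ as you propose, the leading coefficient comes out as $\theta\bigl[\tfrac{M_{\beta}-1}{2}-2\delta\theta\bigr]k^{3r}$, which is strictly larger than $-2\delta\theta^{2}k^{3r}$ for every $M_{\beta}>1$. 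The paper's fix is to exploit the strict inequality in the growth condition twice: first apply \eqref{eq:G2} with a slightly larger $\tilde{\delta}\in(\delta,\tfrac{1}{\theta})$, producing a leading coefficient $\tfrac{M_{\beta}-1}{2}-2\tilde{\delta}\theta$, and \emph{then} choose $k$ large enough that $M_{\beta}$ satisfies $\tfrac{M_{\beta}-1}{2}<2\theta(\tilde{\delta}-\delta)$, which recovers the stated $-2\delta\theta^{2}$.

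A second point you omit: the paper splits part (iv) into the cases $r\in(0,1)$ and $r=1$. This is not cosmetic. In the first case terms of order $k^{4r-1}$ are genuinely $o(k^{3r})$ and go into $R_{k}$; when $r=1$ they are exactly $k^{3}$ and must be tracked with their precise constants (the cancellation $2(\theta-1)+(1-2\theta-\delta\theta)+(1-\delta\theta)=-2\delta\theta$ is exact, so no $\tilde{\delta}$ is needed there). Your uniform treatment would break at $r=1$.
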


With these two intermediate lemmas, we can prove the first theorem. 

\begin{proof}[Proof of Theorem \ref{th:big O rates}]
    According to Lemma \ref{lem:discrete derivative of energy function} and Lemma \ref{lem:coefs of <z(k+1)-z(k),V(z(k+1))> and |V(z(k+1))|}, for large enough $k$ we may write 
    \begin{align}
        &\:\mathcal{E}_{\lambda}^{k + 1} - \mathcal{E}_{\lambda}^{k} \nonumber\\
        \leq &\: \Bigl[ \lambda(\rho - r)k^{2(\rho - r) - 1} + O_{k}\Bigr] \| z^{k + 1} - z_{*}\|^{2} \nonumber\\
        &\hspace{-0.28em}\left.\begin{aligned}
            &+ 3\lambda(\rho - r)k^{2(\rho - r) - 1} \| z^{k + 1} - z_{*}\|^{2} \\
            &+ 2 P_{k} \bigl\langle z^{k + 1} - z_{*}, z^{k + 1} - z^{k}\bigr\rangle + \frac{1}{3} k^{2\rho - r} \bigl( \lambda + rk^{r - 1} - \alpha\bigr) \bigl\| z^{k + 1} - z^{k}\bigr\|^{2} 
        \end{aligned}\right\}\label{eq:quad 1}\\
        &+ 4\lambda k^{2\rho - r} \Bigl[ \bigl(2r\theta k^{r - 1} - 1\bigr)\beta_{k} + \theta k^{r} \bigl( \beta_{k} - \beta_{k - 1}\bigr)\Bigr] \langle z^{k + 1} - z_{*}, V(z^{k + 1})\rangle \nonumber\\
        &+ \frac{2}{3} k^{2\rho - r} \bigl( \lambda + rk^{r - 1} - \alpha\bigr)\bigl\| z^{k + 1} - z^{k}\bigr\|^{2} + k^{2(\rho - r)}\left[-\frac{1}{3}\delta\theta^{2} k^{3r} + R_{k}\right]\beta_{k}^{2} \| V(z^{k + 1})\|^{2} \nonumber\\
        &\hspace{-0.28em}\left.\begin{aligned}
            &+ 3 k^{2(\rho - r)} k^{r} \bigl( \lambda + rk^{r - 1} - \alpha\bigr)\bigl\| z^{k + 1} - z^{k}\bigr\|^{2} \\
            &+ 2 k^{2(\rho - r)} \Biggl\{ k^{2r} \Bigl\{ \Bigl[ 2\theta \bigl( \lambda + rk^{r - 1} - \alpha\bigr) + \bigl( \theta \alpha + \theta rk^{r - 1} - 2\bigr)\Bigr]\beta_{k} + \theta k^{r} \bigl( \beta_{k} - \beta_{k - 1}\bigr)\Bigr\} \\
            &\quad\quad + (Q_{1, k} + Q_{2, k}) \beta_{k}\Biggr\} \bigl\langle z^{k + 1} - z^{k}, V(z^{k + 1})\bigr\rangle -\frac{4}{3} \delta\theta^{2} k^{2(\rho - r)} k^{3r} \beta_{k}^{2} \| V(z^{k + 1})\|^{2} 
        \end{aligned}\right\}\label{eq:quad 2}\\
        &\hspace{-0.28em}\left.\begin{aligned}
            &-\frac{1}{3} \delta\theta^{2} k^{2(\rho - r)} k^{3r} \beta_{k}^{2} \| V(z^{k + 1})\|^{2} + \theta k^{2(\rho - r)} k^{2r} \beta_{k - 1} \eta_{k} \bigl\langle V(z^{k + 1}), V(z^{k + 1}) - V(z^{k})\bigr\rangle \\
            &- k^{2(\rho - r)}\frac{\theta^{2}}{2} \Bigl[ k^{4r} \beta_{k - 1}^{2} + (k + 1)^{2r} k^{2r} \beta_{k}\beta_{k - 1}\Bigr] \bigl\| V(z^{k + 1}) - V(z^{k})\bigr\|^{2}.
        \end{aligned}\right\}\label{eq:quad 3}
    \end{align}
    We will use Lemma \ref{lem:quad} to show that
    \begin{itemize}
        \item \eqref{eq:quad 1} is nonpositive;
        \item There exist $\lambda$ such that $0 < \lambda < \alpha$ (when $r\in (0, 1)$) or $0 < \lambda < \alpha - 1$ (when $r = 1$) and such that \eqref{eq:quad 2} is nonpositive;
        \item \eqref{eq:quad 3} is nonpositive. 
    \end{itemize}
    To show that \eqref{eq:quad 1} is nonpositive, first assume that $r\in (0, 1)$. Take
    \[
        B := P_{k}, \quad A := 3\lambda(\rho - r) k^{2(\rho - r) - 1}, \quad C := \frac{1}{3}k^{2\rho - r}\bigl(\lambda + r k^{r - 1} - \alpha\bigr).
    \]
    According to Lemma \ref{lem:coefs of <z(k+1)-z(k),V(z(k+1))> and |V(z(k+1))|}, we have $P_{k}^{2} = \mathcal{O}(k^{4\rho - 2r - 4})$ as $k\to +\infty$. It follows that for large enough $k$ we have
    \begin{align*}
        B^{2} - AC = P_{k}^{2} - \lambda(r - \rho)\bigl(\alpha - r k^{r - 1} - \lambda\bigr) k^{4\rho - 3r - 1} \leq 0,
    \end{align*}
    since $4\rho - 2r - 4 < 4\rho - 3r - 1$. If $r = 1$, the only term of \eqref{eq:quad 1} which remains is $\frac{1}{3} k (\lambda + 1 - \alpha) \bigl\| z^{k + 1} - z^{k}\bigr\|^{2} \leq 0$.
    
    Before addressing \eqref{eq:quad 2}, we deal with \eqref{eq:quad 3}, since it is simpler. We factor out $k^{2(\rho - r)}$ and take
    \[
        B := \frac{\theta}{2} k^{2r} \beta_{k - 1}\eta_{k}, \quad A := -\frac{1}{3} \delta\theta^{2} k^{3r} \beta_{k}^{2}, \quad C := - \frac{\theta^{2}}{2} \Bigl[ k^{4r} \beta_{k - 1}^{2} + (k + 1)^{2r} k^{2r} \beta_{k}\beta_{k - 1}\Bigr].
    \]
    Therefore, 
    \begin{align*}
        B^{2} - AC &= \frac{\theta^{2}}{4} k^{4r} \beta_{k - 1}^{2} \eta_{k}^{2} - \frac{1}{6}\delta \theta^{4} k^{3r} \beta_{k}^{2} \Bigl[ k^{4r} \beta_{k - 1}^{2} + (k + 1)^{2r} k^{2r} \beta_{k}\beta_{k -  1}\Bigr] \\
        &\leq \frac{\theta^{2}}{4} M_{\beta}^{2} k^{4r}q_{k}^{2} \beta_{k - 1}^{4} - \frac{1}{3} \delta\theta^{4} k^{7r} \beta_{k - 1}^{4} \\
        &= \left[ \frac{\theta^{2}}{4} M_{\beta}^{2} k^{4r} q_{k - 1}^{2} - \frac{1}{3}\delta\theta^{4} k^{7r}\right] \beta_{k - 1}^{4},
    \end{align*}
    where we used the bound \eqref{eq:order of eta(k)} we had on $|\eta_{k}|$, as well as \eqref{eq:G3}. We had established that $q_{k} = \mathcal{O}(k^{r})$ as $k\to +\infty$, which gives $k^{4r} q_{k}^{2} = \mathcal{O}(k^{6r})$ as $k\to +\infty$. Thus, the term between square brackets becomes negative for large $k$, which gives $B^{2} - AC \leq 0$.

    Now, we focus on \eqref{eq:quad 2}. Again, factor out $k^{2(\rho - r)}$ and define
    \[
        \varepsilon_{k} := \alpha - rk^{r - 1} - \lambda, \quad c_{k} := \theta\alpha + \theta rk^{r - 1} - 2. 
    \]
    According to our assumptions, for large enough $k$ we have $\varepsilon_{k} > 0$ and $c_{k} \geq 0$. With this notation, we set 
    \[
        B := k^{2r} \Bigl[ \bigl(-2\theta \varepsilon_{k} + c_{k}\bigr)\beta_{k} + \theta k^{r} \bigl( \beta_{k} - \beta_{k - 1}\bigr)\Bigr] + (Q_{1, k} + Q_{2, k})\beta_{k}, \quad A := -3k^{r} \varepsilon_{k}, \quad C := -\frac{4}{3} \delta\theta^{2} k^{3r} \beta_{k}^{2}.  
    \]
    First of all, notice that 
    \begin{align*}
        B^{2} = &\: k^{4r} \Bigl[ \bigl(-2\theta \varepsilon_{k} + c_{k}\bigr)\beta_{k} + \theta k^{r} \bigl( \beta_{k} - \beta_{k - 1}\bigr)\Bigr]^{2} + (Q_{1, k} + Q_{2, k})^{2} \beta_{k}^{2}\\
        &+ 2 k^{2r}\Bigl[ \bigl(-2\theta \varepsilon_{k} + c_{k}\bigr)\beta_{k} + \theta k^{r} \bigl( \beta_{k} - \beta_{k - 1}\bigr)\Bigr] (Q_{1, k} + Q_{2, k}) \beta_{k} \\
        = &\: k^{4r} \Bigl[ \bigl(-2\theta \varepsilon_{k} + c_{k}\bigr)\beta_{k} + \theta k^{r} \bigl( \beta_{k} - \beta_{k - 1}\bigr)\Bigr]^{2} \\
        &+ \left\{2 k^{2r}\left[ \bigl(-2\theta \varepsilon_{k} + c_{k}\bigr) + \theta k^{r} \frac{\beta_{k} - \beta_{k - 1}}{\beta_{k}}\right](Q_{1, k} + Q_{2, k}) + (Q_{1, k} + Q_{2, k})^{2}\right\} \beta_{k}^{2} \\
        = &\: k^{4r} \Bigl[ \bigl(-2\theta \varepsilon_{k} + c_{k}\bigr)\beta_{k} + \theta k^{r} \bigl( \beta_{k} - \beta_{k - 1}\bigr)\Bigr]^{2} + S_{k} \beta_{k}^{2}, 
    \end{align*}
    where $S_{k}$ is the term between curly brackets. In Lemma \ref{lem:coefs of <z(k+1)-z(k),V(z(k+1))> and |V(z(k+1))|} we had shown that $Q_{1, k} = \mathcal{O}\left( k^{r}\right)$ and $Q_{2, k} = \mathcal{O}(k^{3r - 1})$ as $k\to +\infty$, which gives $S_{k} = \mathcal{O}(k^{\max\{ 3r, 5r - 1\}})$ as $k\to +\infty$. With these observations, we proceed: 
    \begin{align}
        &\:B^{2} - AC - S_{k}\beta_{k}^{2} \nonumber\\
        \leq &\: k^{4r} \Bigl[ \bigl(-2\theta \varepsilon_{k} + c_{k}\bigr)\beta_{k} + \theta k^{r} \bigl( \beta_{k} - \beta_{k - 1}\bigr)\Bigr]^{2} - 4\delta\theta^{2} k^{4r} \varepsilon_{k} \beta_{k}^{2} \nonumber \\
        = &\: k^{4r} \Bigl[ \bigl(-2 \theta \varepsilon_{k} + c_{k}\bigr)^{2} \beta_{k}^{2} + 2\theta \bigl(-2\theta \varepsilon_{k} + c_{k}\bigr) \beta_{k} k^{r} \bigl( \beta_{k} - \beta_{k - 1}\bigr) + \theta^{2} k^{2r} \bigl(\beta_{k} - \beta_{k - 1}\bigr)^{2}\Bigr] - 4\delta\theta^{2} k^{4r} \varepsilon_{k} \beta_{k}^{2} \nonumber\\
        \leq &\: k^{4r} \Bigl[ \bigl(4\theta^{2} \varepsilon_{k}^{2} - 4\theta c_{k} \varepsilon_{k} + c_{k}^{2} \bigr) \beta_{k}^{2} + 2c_{k} \bigl(1 - 2r\theta k^{r - 1} - \delta\theta\bigr) \beta_{k}^{2} + \bigl(1 - 2r\theta k^{r - 1} - \delta\theta\bigr)^{2} \beta_{k}^{2}\Bigr] \nonumber\\
        &- 4\delta\theta^{2} k^{4r} \varepsilon_{k} \beta_{k}^{2} \nonumber \\
        = &\: k^{4r} \beta_{k}^{2} \left\{ 4\theta^{2} \varepsilon_{k}^{2} - 4\theta (\delta\theta + c_{k}) \varepsilon_{k} + \Bigl[ c_{k} + (1 - 2r\theta k^{r - 1} - \delta\theta)\Bigr]^{2}\right\} \nonumber\\
        = &\: k^{4r} \beta_{k}^{2} \: p_{k}, \label{eq:B^2-AC is negative}
    \end{align}
    where $p_{k}$ is the term inside curly brackets. We will distinguish between the cases $r\in (0, 1)$ and $r = 1$. 

    \fbox{Case $r\in (0, 1)$:} define 
    \[
        c := \theta\alpha - 2, \quad \varepsilon := \alpha - \lambda, \quad p(t) := 4\theta^{2} t^{2} - 4\theta (c + \delta\theta)t + (c + 1 - \delta\theta)^{2}. 
    \]
    Notice that $c_{k} \to c$, $\varepsilon_{k} \to \varepsilon$ and $p_{k} \to p(\varepsilon)$ as $k\to +\infty$. The discriminant of the quadratic equation $p(t)=0$ reads
    \[
        \Delta = 16 \theta^{2} \Bigl[ (c + \delta\theta)^{2} - (c + 1 - \delta\theta)^{2}\Bigr].
    \]
    Notice that $\delta$ was chosen such that $\delta < \frac{1}{\theta}$. The growth assumption \eqref{eq:discrete growth intro} tells us it can also be chosen such that $\frac{1}{\theta} < 2\delta$, thus giving $\delta\theta > 1 - \delta\theta$ and $c + \delta\theta > c + 1 - \delta\theta > 0$. Therefore, $\Delta > 0$ and $p$ has two distinct roots: setting $\tilde{\Delta} := \frac{1}{8\theta^{2}} \sqrt{\Delta}$, these two roots are
    \[
        \underline{\varepsilon} = \frac{c + \delta\theta}{2\theta} - \tilde{\Delta}, \quad \overline{\varepsilon} = \frac{c + \delta\theta}{2\theta} + \tilde{\Delta}.
    \]
    The midpoint between the two roots is given by $\varepsilon_{r, 1} := \frac{c + \delta\theta}{2\theta}$, which fulfills
    \[
        0 < \varepsilon_{r, 1} = \frac{c + \delta\theta}{2\theta} \leq \frac{c + 1}{2\theta} = \frac{\theta\alpha - 1}{2\theta} = \frac{\alpha}{2} - \frac{1}{2\theta} < \alpha.
    \]
    Choose $\varepsilon_{r, 2} > 0$ such that $\varepsilon_{r, 1} - \tilde{\Delta} < \varepsilon_{r, 2} < \varepsilon_{r, 1}$. Set $\lambda_{r, i} := \alpha - \varepsilon_{r, i}$, $ i = 1, 2$. As we pointed out previously, we have $p_{k} \to p(\varepsilon_{r, i})$ as $k\to +\infty$. Therefore, for large enough $k$, we arrive at 
    \[
        p_{k} < \frac{p(\varepsilon_{r, i})}{2} = \frac{p(\alpha - \lambda_{r, i})}{2} < 0 \quad \text{for } i = 1, 2. 
    \]
    Going back to \eqref{eq:B^2-AC is negative}, this gives 
    \[
        B^{2} - AC \leq k^{4r} \beta_{k}^{2} \:\frac{p(\alpha - \lambda_{r, i})}{2} + S_{k}\beta_{k}^{2} < 0 \quad \text{for large enough } k \text{ and } i = 1, 2, 
    \]
    since $S_{k} = \mathcal{O}(k^{\max\{ 3r, 5r - 1\}})$ as $k\to +\infty$ and $4r > 5r - 1$ when $r < 1$.

    \fbox{Case $r = 1$:} here, we have
    \[
        \varepsilon_{k} \equiv \varepsilon := \alpha - 1 - \lambda, \quad \text{and} \quad c_{k} \equiv c := \theta\alpha + \theta - 2. 
    \]
    In this case, we define
    \[
        p(t) := 4\theta^{2} t^{2} - 4\theta(c + \delta\theta)t + \bigl(c + 1 - 2\theta - \delta\theta\bigr)^{2}.
    \]
    With this definition, $p_{k}$ is nothing else than $p(\varepsilon)$ for every $k$. The discriminant of the quadratic equation $p(t)=0$ the reads 
    \[
        \Delta = 16\theta^{2} \Bigl[ (c + \delta\theta)^{2} - (c + 1 - 2\theta - \delta\theta)^{2}\Bigr],
    \]
    and again, it is positive provided $\delta < \frac{1}{\theta} - 2$ is chosen such that $\frac{1}{\theta} - 2 < 2\delta$, which can be done thanks to the growth condition \eqref{eq:discrete growth intro}. Define $\tilde{\Delta}$ and the midpoint $\varepsilon_{1, 1}$ as before. Now, 
    \[
        0 < \varepsilon_{1, 1} = \frac{c + \delta\theta}{2\theta} \leq \frac{c + 1 - 2\theta}{2\theta} = \frac{\theta\alpha - \theta - 1}{2\theta} = \frac{\alpha - 1}{2} - \frac{1}{2\theta} < \alpha - 1.
    \]
    Choose $\varepsilon_{1, 2} > 0$ such that $\varepsilon_{1, 1} - \tilde{\Delta} < \varepsilon_{1, 2} < \varepsilon_{1, 1}$ and define $\lambda_{1, i} = \alpha - 1 - \varepsilon_{1, i}$, $i = 1, 2$. Going back to \eqref{eq:B^2-AC is negative}, this gives
    \[
        B^{2} - AC \leq k^{4} \beta_{k}^{2} \: p(\alpha - 1 - \lambda_{1, i}) + S_{k}\beta_{k}^{2} < 0 \quad \text{for large enough } k \text{ and } i = 1, 2, 
    \]
    since $p(\alpha - 1 - \lambda_{1, i}) = p(\varepsilon_{1, i}) < 0$ and $Q_{2, k}\equiv 0$ when $r = 1$, giving $S_{k} = \mathcal{O}(k^{3})$ as $k\to +\infty$. 

    To summarize, we have shown that \eqref{eq:quad 1}, \eqref{eq:quad 2}, and \eqref{eq:quad 3} are nonpositive. Going back to these lines, we have now 
    \begin{align*}
        \mathcal{E}_{\lambda_{r, i}}^{k + 1} - \mathcal{E}_{\lambda_{r, i}}^{k} \leq &\: \Bigl[ \lambda_{r, i}(\rho - r)k^{2(\rho - r) - 1} + O_{k}\Bigr] \bigl\| z^{k + 1} - z_{*}\bigr\|^{2} \\
        &+ 4\lambda_{r, i} k^{2\rho - r} \Bigl[ \bigl(2r\theta k^{r - 1} - 1\bigr)\beta_{k} + \theta k^{r} \bigl( \beta_{k} - \beta_{k - 1}\bigr)\Bigr] \langle z^{k + 1} - z_{*}, V(z^{k + 1})\rangle \\
        &+ \frac{2}{3} k^{2\rho - r} \bigl( \lambda_{r, i} + r k^{r - 1} - \alpha\bigr) \bigl\| z^{k + 1} - z^{k}\bigr\|^{2} + k^{2(\rho - r)} \left[ -\frac{1}{3} \delta\theta^{2} k^{3r} + R_{k}\right] \beta_{k}^{2} \| V(z^{k + 1})\|^{2} 
    \end{align*}
    If $r \in (0, 1)$, then $2(\rho - r) - 1 > 2(\rho - r) + r - 2$. Recall that $O_{k} = k^{2(\rho - r) + r - 2}$ as $k\to +\infty$, thus the term with $\| z^{k + 1} - z_{*}\|^{2}$ eventually becomes nonpositive. If $r = 1$, the $\| z^{k + 1} - z_{*}\|^{2}$ term vanishes. In both cases the $\| V(z^{k + 1})\|^{2}$ eventually becomes nonpositive since we had $R_{k} = o(k^{3r})$ as $k\to +\infty$. The $\langle z^{k + 1} - z_{*}, V(z^{k + 1})\rangle$ term is nonpositive according to \eqref{eq:G1}, and the $\bigl\| z^{k + 1} - z^{k}\bigr\|^{2}$ term is evidently nonpositive. All in all, we have 
    \[
        \mathcal{E}_{\lambda_{r, i}}^{k + 1} - \mathcal{E}_{\lambda_{r, i}}^{k} \leq 0 
    \]
    for $k$ large enough, say $k\geq K \geq k_{0}$. This means that $\bigl(\mathcal{E}_{\lambda_{r, i}}^{k}\bigr)_{k\geq K}$ is monotonically decreasing, and thus 
    \begin{equation} \label{eq: disc energy function bounded} 
        0 \leq \mathcal{E}_{\lambda_{r, i}}^{k} \leq \mathcal{E}_{\lambda_{r, i}}^{K} \text{ for }k\geq K\text{, }r\in (0, 1]\text{ and }i=1, 2.
    \end{equation}

    In order to conclude, we separate again the cases $r\in (0, 1)$ and $r = 1$. 

    \fbox{Case $r\in (0, 1)$:} according to the definition of the discrete energy function \eqref{eq:energy function line 1}-\eqref{eq:energy function line 4}, \eqref{eq: disc energy function bounded}, the fact that $(\beta_{k})_{k\geq 1}$ is nondecreasing and \eqref{eq:G3}, for $k\geq K$ we have 
    \begin{equation}\label{eq:disc rate for ||V(z(k))||}
        \frac{\theta^{2}}{2} k^{2(\rho + r)} \frac{\beta_{k - 1}^{2}}{M_{\beta}} \| V(z^{k})\|^{2} \leq \frac{\theta^{2}}{2} k^{2(\rho + r)} \frac{\beta_{k}^{2}}{M_{\beta}}\| V(z^{k})\|^{2} \leq \frac{\theta^{2}}{2} (k + 1)^{2r} k^{2\rho} \beta_{k} \beta_{k - 1} \| V(z^{k})\|^{2} \leq \mathcal{E}_{\lambda_{r, 1}}^{K}
    \end{equation}
    and 
    \begin{equation}\label{eq:disc rate for <z(k) - z*, V(z(k))>}
        2\lambda_{r, 1} \theta k^{2\rho} \frac{\beta_{k}}{M_{\beta}} \langle z^{k + 1} - z_{*}, V(z^{k})\rangle \leq 2\lambda_{r, 1} \theta k^{2\rho} \beta_{k - 1} \langle z^{k} - z_{*}, V(z^{k})\rangle \leq \mathcal{E}_{\lambda_{r, 1}}^{K}, 
    \end{equation}
    from which we deduce 
    \begin{equation}\label{eq: disc rates for <z(k) - z*, V(z(k))>, ||V(z(k))||}
        \langle z^{k} - z_{*}, V(z^{k})\rangle \leq \frac{M_{\beta}\mathcal{E}_{\lambda_{r, 1}}^{K}}{2\lambda_{r, 1} \theta}\cdot\frac{1}{k^{2\rho}\beta_{k}} \quad \text{and} \quad \| V(z^{k})\| \leq \frac{\sqrt{2 M_{\beta}\mathcal{E}_{\lambda_{r, 1}}^{K}}}{\theta} \cdot \frac{1}{k^{\rho + r} \beta_{k}}.
    \end{equation}
    Furthemore, say that for $k\geq K$ we have $(2\rho - r)k^{r - 1} < \xi < \alpha - \lambda_{r, 1}$. Going back to \eqref{eq:energy function line 2}, this means that for $k\geq K$ it holds 
    \[
        2\lambda_{r, 1} k^{2(\rho - r)} \bigl(\alpha - \xi - \lambda_{r, 1}\bigr) \| z^{k} - z_{*}\|^{2} \leq 2\lambda_{r, 1} k^{2(\rho - r)} \bigl(\alpha - (2\rho - r)k^{r - 1}) - \lambda_{r, 1}\bigr) \| z^{k} - z_{*}\|^{2} \leq \mathcal{E}_{\lambda_{r, 1}}^{K}
    \]
    and thus 
    \[
        2\lambda_{r, 1}t^{\rho - r}\| z^{k} - z_{*}\| \leq \sqrt{\frac{2 \mathcal{E}_{\lambda_{r, 1}}^{K}}{ \bigl(\alpha - \xi - \lambda_{r, 1}\bigr)}}.
    \]
    Combining this together with \eqref{eq:energy function line 1}, \eqref{eq: disc energy function bounded}, \eqref{eq:disc rate for ||V(z(k))||} and \eqref{eq: disc rates for <z(k) - z*, V(z(k))>, ||V(z(k))||} yields
    \begin{align*}
        2k^{\rho} \bigl\| z^{k} - z^{k - 1}\bigr\| \leq &\: 2\lambda_{r, 1} k^{\rho - r} \| z^{k} - z_{*}\| + \theta k^{\rho + r} \beta_{k - 1} \| V(z^{k})\| \\
        &+ \Bigl\| 2\lambda_{r, 1} k^{\rho - r}(z^{} - z_{*}) + 2k^{\rho} \bigl(z^{k} - z^{k - 1}\bigr)+ \theta k^{\rho + r} \beta_{k - 1} V(z^{k})\Bigr]\Bigr\| \\
        \leq &\:\sqrt{\frac{2 \mathcal{E}_{\lambda_{r, 1}}^{K}}{ \bigl(\alpha - \xi - \lambda_{r, 1}\bigr)}} + \sqrt{2 M_{\beta} \mathcal{E}_{\lambda_{r, 1}}^{K}} + \sqrt{2 \mathcal{E}_{\lambda_{r, 1}}^{K}}.
    \end{align*}
    All in all, we have obtained that, as $k\to +\infty$,
    \begin{equation}\label{eq:disc to change rho by r and O by o}
        \| V(z^{k})\| = \mathcal{O}\left(\frac{1}{k^{\rho + r} \beta_{k}}\right), \quad \langle z^{k} - z_{*}, V(z^{k})\rangle = \mathcal{O}\left(\frac{1}{k^{2\rho}\beta_{k}}\right), \quad \bigl\| z^{k} - z^{k - 1}\bigr\| = \mathcal{O}\left(\frac{1}{k^{\rho}}\right). 
    \end{equation}

    \fbox{Case $r = 1$:} going back to \eqref{eq:energy function line 2} we have, for $k\geq K$, 
    \[
        \| z^{k} - z_{*}\| \leq \sqrt{\frac{\mathcal{E}_{\lambda_{1, 1}}^{K}}{2\lambda_{1, 1} \bigl(\alpha - 1 - \lambda_{1, 1}\bigr)}},
    \]
    which gives the boundedness of $\bigl( z^{k}\bigr)_{k\geq 2}$. Using the boundedness of $\bigl( \mathcal{E}_{\lambda_{r, 1}}^{k}\bigr)_{k\geq 1}$ and arguing exactly as in the case $r \in (0, 1)$ we obtain 
    \begin{equation}\label{eq:disc to change O by o}
        \| V(z^{k})\| = \mathcal{O}\left(\frac{1}{k^{2} \beta_{k}}\right), \quad \langle z^{k} - z_{*}, V(z^{k})\rangle = \mathcal{O}\left( \frac{1}{k^{2}\beta_{k}}\right), \quad \bigl\| z^{k} - z^{k - 1}\bigr\| = \mathcal{O}\left(\frac{1}{k^{r}}\right)
    \end{equation}
    as $k\to +\infty$. 
\end{proof}

Similar to the continuous case, in the following theorem we will assume that $\bigl( z^{k}\bigr)_{k\geq 2}$ is bounded when $r\in (0, 1)$. This will allow us to set $\rho = r$ and improve the rate of convergence results from $\mathcal{O}$ to $o$ in \eqref{eq:disc to change rho by r and O by o}, as well as to show weak convergence of the iterates towards a zero of $V$. Additionally, when $r = 1$, we will can also lift $\mathcal{O}$ to $o$ and again show weak convergence of the iterates towards a zero of $V$.  

\begin{theorem}
    Under the same hypotheses of Theorem \ref{th:big O rates}, consider the following convergence rates as $k\to +\infty$:
    \[
        \| V(z^{k})\| = o\left(\frac{1}{k^{2r} \beta_{k}}\right), \quad \langle z^{k} - z_{*}, V(z^{k})\rangle = o\left( \frac{1}{k^{2r} \beta_{k}}\right) \quad \text{and} \quad \| z^{k} - z^{k - 1}\| = o\left( \frac{1}{k^{r}}\right). 
    \]
    If $r = 1$ or $\Biggl( r\in (0, 1)$ and $(z^{k})_{k\geq 2}$ is bounded $\Biggr)$, then the above rates hold and $(z^{k})_{k\geq 2}$ converges weakly to a zero of $V$ as $k\to +\infty$.
\end{theorem}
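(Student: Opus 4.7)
The plan is to adapt to the discrete setting the argument used in the continuous case between \eqref{eq:d energy function when rho = r} and the application of Opial's lemma. The starting point is to take $\rho = r$ in the discrete energy $\mathcal{E}_\lambda^k$ defined in \eqref{eq:energy function line 1}--\eqref{eq:energy function line 4}. With this choice, the leading coefficient $4\lambda (\rho - r) k^{2(\rho - r) - 1}$ attached to $\|z^{k+1} - z_*\|^2$ in the upper bound for $\mathcal{E}_\lambda^{k+1} - \mathcal{E}_\lambda^k$ provided by Lemmas \ref{lem:discrete derivative of energy function} and \ref{lem:coefs of <z(k+1)-z(k),V(z(k+1))> and |V(z(k+1))|} vanishes; when $r = 1$, a direct inspection of \eqref{eq:dE line 1} shows that the entire coefficient is identically zero, while for $r \in (0, 1)$ only the remainder $O_k = \mathcal{O}(k^{r - 2})$ survives. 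Since $r < 1$ forces $r - 2 < -1$, the sequence $(k^{r-2})$ is summable, and the boundedness of $(z^k)_{k \geq 2}$ (by hypothesis when $r \in (0, 1)$, automatic when $r = 1$ by part (ii) of Theorem \ref{th:big O rates}) forces the positive part $O_k \|z^{k+1} - z_*\|^2$ to be summable.

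The next step is to absorb this summable positive contribution into a telescoping argument to conclude that $\lim_{k \to +\infty} \mathcal{E}_{\lambda_{r, i}}^k$ exists for both $i = 1, 2$; this is the discrete counterpart of \eqref{eq:limit of continuous E when rho=r exists}. Mimicking the derivation of \eqref{eq: int 1}--\eqref{eq: int 3}, and exploiting the improved inequality \eqref{eq:G2} furnished by the growth condition, one then extracts the summability results
\begin{align*}
    \sum_{k \geq K} k^r \|z^{k+1} - z^k\|^2 < +\infty, \quad \sum_{k \geq K} k^r \beta_{k-1} \langle z^k - z_*, V(z^k)\rangle < +\infty, \quad \sum_{k \geq K} k^{3r} \beta_k^2 \|V(z^k)\|^2 < +\infty.
\end{align*}

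To obtain the $o$-rates, I would form the difference $\mathcal{E}_{\lambda_{r, 2}}^k - \mathcal{E}_{\lambda_{r, 1}}^k$ which, as in \eqref{eq: difference of energy functions}, is a nonzero multiple of the discrete quantity
\[
    p^k := \tfrac{1}{2}\bigl(\alpha - r k^{r - 1}\bigr) \|z^k - z_*\|^2 + k^r \bigl\langle z^k - z_*, (z^k - z^{k-1}) + \theta k^r \beta_{k-1} V(z^k)\bigr\rangle,
\]
so $\lim_k p^k$ exists. Substituting this identity back into $\mathcal{E}_{\lambda_{r, 1}}^k$ exhibits, up to a $4\lambda_{r, 1} p^k$ term, the sum $h^k := k^{2r}\bigl\|(z^k - z^{k-1}) + \theta k^r \beta_{k-1} V(z^k)\bigr\|^2 + k^{2r}\|z^k - z^{k-1}\|^2$, so $\lim_k h^k$ also exists. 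Bounding $\sum (1/k^r) h^k$ by the first and third summability results above forces this limit to be zero, which yields $\|z^k - z^{k-1}\| = o(1/k^r)$ and $\|V(z^k)\| = o(1/(k^{2r} \beta_k))$ as $k \to +\infty$; boundedness of $(z^k)$ and the Cauchy--Schwarz inequality then cover the gap rate.

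For weak convergence, I would apply Opial's lemma \ref{lem:Opial} with the help of the discrete auxiliary quantity
\[
    q^k := \tfrac{1}{2} \|z^k - z_*\|^2 + \theta \sum_{j = K}^{k} j^r \beta_{j - 1}\langle z^j - z_*, V(z^j)\rangle.
\]
A direct algebraic rearrangement, analogous to the computation preceding Lemma \ref{lem:q} in the continuous case, identifies $\alpha q^k + k^r (q^k - q^{k - 1})$ as $p^k$ plus terms controlled by the second summability result and the boundedness of $(z^k)$, hence convergent. A discrete version of Lemma \ref{lem:q} then produces the existence of $\lim_k q^k$, and convergence of the partial sums $\sum j^r \beta_{j-1}\langle z^j - z_*, V(z^j)\rangle$ yields the existence of $\lim_k \|z^k - z_*\|$. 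For the second Opial condition, any weak cluster point $\bar z$ of $(z^k)$ satisfies $V(z^{k_n}) \to 0$ in norm by the $o$-rate just obtained, so the sequential closedness of the graph of $V$ in $\mathcal{H}^{\text{weak}} \times \mathcal{H}^{\text{strong}}$ gives $V(\bar z) = 0$. The main obstacle I anticipate is producing the discrete counterpart of Lemma \ref{lem:q} and verifying that the remainder terms separating $\alpha q^k + k^r(q^k - q^{k-1})$ from $p^k$ are genuinely convergent rather than merely summable, since discrete summation by parts introduces boundary and second-order terms that have no counterpart in the continuous integration-by-parts.
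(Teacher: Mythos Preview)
Your proposal is correct and mirrors the paper's proof essentially step for step: set $\rho = r$, control the residual $\|z^{k+1}-z_*\|^2$ term via boundedness to obtain convergence of $\mathcal{E}_{\lambda_{r,i}}^k$ for $i=1,2$, extract the three summability statements from the energy inequality, use two $\lambda$-values to isolate $p_k$ and then $h_k$, and finish via Opial with the auxiliary $q_k$. The discrete analogue of Lemma~\ref{lem:q} that you flag as the main obstacle is already supplied in the paper as Lemma~\ref{lem:q discrete}; the only detail you gloss over is a small correction term $\tfrac{\theta^2}{2}\bigl[(k+1)^{2r}k^{2r}\beta_k\beta_{k-1}-k^{4r}\beta_{k-1}^2\bigr]\|V(z^k)\|^2$ appearing when rewriting $\mathcal{E}_{\lambda_{r,1}}^k$ in terms of $p_k$ and $h_k$ (from the mismatch between line~\eqref{eq:energy function line 4} and its continuous analogue), which the paper shows tends to zero using the $\mathcal{O}$-rates.
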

\begin{proof}
    Going back to Lemma \ref{lem:discrete derivative of energy function} and \ref{lem:coefs of <z(k+1)-z(k),V(z(k+1))> and |V(z(k+1))|}, choosing $\rho \in (0, r)$ when $r \in (0, 1)$ is done just so we can guarantee that term attached to $\| z^{k + 1} - z_{*}\|^{2}$ eventually becomes nonpositive. We can plug $\rho = r$ in our calculations; the term with $\bigl\langle z^{k + 1} - z_{*}, z^{k + 1} - z^{k}\bigr\rangle$ will vanish, and the nonnegativity of \eqref{eq:quad 2} and \eqref{eq:quad 3} will not be affected since it doesn't depend on $\rho$. We now have 
    \begin{align}
        \mathcal{E}_{\lambda_{r, i}}^{k + 1} - \mathcal{E}_{\lambda_{r, i}}^{k}
        \leq &\: 2\lambda_{r, i} \:r \Big[ k^{r - 1} - (k + 1)^{r - 1}\Bigr] \| z^{k + 1} - z_{*}\|^{2} \nonumber\\
        &+ 4\lambda_{r, i} k^{r} \Bigl[ \bigl(2r\theta k^{r - 1} - 1\bigr)\beta_{k} + \theta k^{r} \bigl( \beta_{k} - \beta_{k - 1}\bigr)\Bigr] \langle z^{k + 1} - z_{*}, V(z^{k + 1})\rangle \label{eq:final dE line 1}\\
        &+ k^{r} \bigl( \lambda_{r, i} + rk^{r - 1} - \alpha\bigr)\bigl\| z^{k + 1} - z^{k}\bigr\|^{2} \label{eq:final dE line 2}\\
        &+ \left[-\frac{1}{3}\delta\theta^{2} k^{3r} + R_{k}\right]\beta_{k}^{2} \| V(z^{k + 1})\|^{2} \label{eq:final dE line 3}\\
        \leq &\: 2\lambda_{r, i}\: r \Bigl[ k^{r - 1} - (k + 1)^{r - 1}\Bigr] \| z^{k + 1} - z_{*}\|^{2}, \label{eq:final dE line 4}
    \end{align}
    since for large enough $k$ lines \eqref{eq:final dE line 1} - \eqref{eq:final dE line 3} become nonpositive (recall that $R_{k} = o(k^{3r})$ as $k\to +\infty$, so $R_{k} - \frac{1}{3}\delta\theta^{2} k^{3r}$ eventually becomes negative). If $r\in (0, 1)$, recall we assume that $(\| z^{k} - z_{*}\|)_{k\geq 0}$ is bounded, say $ \| z^{k} - z_{*}\|^{2} \leq M$ for all $k$. According to \eqref{eq:final dE line 4}, for large $k$ (say $k\geq K \geq 2$ suffices), we have
    \begin{equation}\label{eq:almost decreasing property for E}
        \mathcal{E}_{\lambda_{r, i}}^{k + 1} - \mathcal{E}_{\lambda_{r, i}}^{k} \leq 2\lambda_{r, i}\: r \Bigl[ k^{r - 1} - (k + 1)^{r - 1}\Bigr] \| z^{k + 1} - z_{*}\|^{2} \leq 2\lambda_{r, i} \: rM \Bigl[ k^{r - 1} - (k + 1)^{r - 1}\Bigr].
    \end{equation}
    Rearranging the previous terms gives 
    \[
        \mathcal{E}_{\lambda_{r, i}}^{k + 1} + 2\lambda_{r, i} rM (k + 1)^{r - 1} \leq \mathcal{E}_{\lambda_{r, i}}^{k} + 2\lambda_{r, i} rM k^{r - 1} \quad \forall k\geq K. 
    \]
    Therefore, the sequence $\bigl(\mathcal{E}_{\lambda_{r, i}}^{k} + 2\lambda_{r, i} rM k^{r - 1}\bigr)_{k\geq K}$ is nonnegative and nonincreasing, meaning it has a limit as $k\to +\infty$. Since $2\lambda_{r, i} r (1 - r) M k^{r - 1}$ also has a limit as $k\to +\infty$, we come to the following: for $\rho = r$, we have 
    \begin{align}
        0 \leq \mathcal{E}_{\lambda_{r, i}}^{k} \leq \mathcal{E}_{\lambda_{r, i}}^{K} + 2\lambda_{r, i} r (1 - r) M K^{r - 1}\text{ for }k\geq K\text{, }r\in (0, 1]\text{ and }i = 1, 2 ; \\
        \lim_{k\to +\infty} \mathcal{E}_{\lambda_{r, i}}^{k}\text{ exists for }r\in (0, 1]\text{ and }i = 1, 2. 
    \end{align}
    By going back to \eqref{eq:energy function line 1}-\eqref{eq:energy function line 4}, setting $\rho = r$ and reasoning exactly as we did before, we arrive at 
    \begin{equation}
        \| V(z^{k})\| = \mathcal{O}\left(\frac{1}{k^{2r} \beta_{k}}\right), \quad \langle z^{k} - z_{*}, V(z^{k})\rangle = \mathcal{O}\left( \frac{1}{k^{2r}\beta_{k}}\right), \quad \bigl\| z^{k} - z^{k - 1}\bigr\| = \mathcal{O}\left( \frac{1}{k^{r}}\right)
    \end{equation}
    as $k\to +\infty$, where these rates now hold for all $r\in (0, 1]$.

    For $r\in (0, 1]$, we proceed to show three summability results that we will need later. Going back to \eqref{eq:final dE line 4}, choosing $i = 1$ and using \eqref{eq:G1}, for $k\geq K$ we have
    \begin{align}
        &\:4\lambda_{r, 1} \delta\theta k^{r} \beta_{k} \langle z^{k + 1} - z_{*}, V(z^{k + 1})\rangle + \bigl(\alpha - rk^{r - 1} - \lambda_{r, 1}\bigr) k^{r} \bigl\| z^{k + 1} - z^{k}\bigr\|^{2} + \frac{1}{6} \delta\theta^{2} k^{3r} \beta_{k}^{2} \| V(z^{k + 1})\|^{2} \nonumber\\
        \leq &\: \mathcal{E}_{\lambda_{r, 1}}^{k} - \mathcal{E}_{\lambda_{r, 1}}^{k + 1} + 2\lambda_{r, 1} \Bigl[ k^{r - 1} - (k + 1)^{r - 1}\Bigr] \| z^{k + 1} - z_{*}\|^{2}, \label{eq:three integrals}
    \end{align}
    where we used the fact that $-\frac{1}{6}\delta\theta^{2} k^{3r} + R_{k}$ eventually becomes negative. Notice that the last line is summable. We have
    \begin{enumerate}[(I)]
        \item Using \eqref{eq:A3} and the Cauchy-Schwarz inequality, and using that $\| V(z^{k})\| = \mathcal{O}\left( \frac{1}{k^{2r} \beta_{k}}\right)$ as $k\to +\infty$, we obtain  
        \[
            \Bigl[(k + 1)^{r} - k^{r}\Bigr] \beta_{k} \langle z^{k + 1} - z_{*}, V(z^{k + 1})\rangle \leq rk^{r - 1} \beta_{k} \| z^{k + 1} - z_{*}\| \| V(z^{k + 1})\| = \mathcal{O}\left( \frac{1}{k^{1 + r}}\right)
        \]
        as $k\to +\infty$. This implies that the left-hand side is summable. Therefore, 
        \begin{align}
            &\:\sum_{k = 1}^{+\infty} (k + 1)^{r} \beta_{k} \langle z^{k + 1} - z_{*}, V(z^{k + 1})\rangle \nonumber\\
            \leq &\: \underbrace{\sum_{k = 1}^{+\infty} k^{r} \beta_{k} \langle z^{k + 1} - z_{*}, V(z^{k + 1})\rangle}_{<+\infty\text{ by \eqref{eq:three integrals}}} + \underbrace{\sum_{k = 1}^{+\infty} \Bigl[(k + 1)^{r} - k^{r}\Bigr] \beta_{k} \langle z^{k + 1} - z_{*}, V(z^{k + 1})\rangle}_{<+\infty} < +\infty. \label{eq:integral 1}
        \end{align}
        \item Arguing like in the previous item and using $\bigl\|z^{k} - z^{k - 1}\bigr\| = \mathcal{O}\left(\frac{1}{k^{r}}\right)$ as $k\to +\infty$, \eqref{eq:three integrals} implies
        \begin{equation}
            \sum_{k = }^{+\infty}(k + 1)^{r} \bigl\| z^{k + 1} - z^{k}\bigr\|^{2} < +\infty. \label{eq:integral 2}
        \end{equation}
        \item Using \eqref{eq:A4} and reasoning as before, \eqref{eq:three integrals} entails
        \begin{equation}
            \sum_{k = 1}^{+\infty} (k + 1)^{3r} \beta_{k}^{2} \| V(z^{k + 1})\|^{2} < +\infty. \label{eq:integral 3}
        \end{equation}
    \end{enumerate}
    Consider the $\lambda_{r, i}$ defined in the proof of the previous theorem. Recalling the definition \eqref{eq:energy function line 1} - \eqref{eq:energy function line 4} of our energy function, now with $\rho = r$, we can write 
    \begin{align*}
        &\:\mathcal{E}_{\lambda_{r, 2}}^{k} - \mathcal{E}_{\lambda_{r, 1}}^{k} \\
        = &\: \frac{1}{2} \Bigl\| 2\lambda_{r, 2}(z^{k} - z_{*}) + 2k^{r} \bigl( z^{k} - z^{k - 1}\bigr) + \theta k^{2r} \beta_{k - 1} V(z^{k})\Bigr\|^{2} \\
        &- \frac{1}{2} \Bigl\| 2\lambda_{r, 1}(z^{k} - z_{*}) + 2k^{r} \bigl( z^{k} - z^{k - 1}\bigr) + \theta k^{2r} \beta_{k - 1} V(z^{k})\Bigr\|^{2} \\
        &+ \Bigl[ 2\lambda_{r, 2} \bigl(\alpha - rk^{r - 1} - \lambda_{r, 2}\bigr) - 2\lambda_{r, 1}\bigl(\alpha - rk^{r - 1} - \lambda_{r, 1}\bigr)\Bigr] \| z^{k} - z_{*}\|^{2} \\
        &+ 2 \bigl( \lambda_{r, 2} - \lambda_{r, 1}\bigr) \theta k^{2r} \beta_{k - 1} \langle z^{k} - z_{*}, V(z^{k})\rangle \\
        = &\: \Bigl\langle 2\lambda_{r, 2} (z^{k} - z_{*}) + 2k^{r} \bigl(z^{k} - z^{k - 1}\bigr) + \theta k^{2r} \beta_{k - 1} V(z^{k}), \:\: 2 \bigl( \lambda_{r, 2} - \lambda_{r, 1}\bigr) (z^{k} - z_{*})\Bigr\rangle \\
        &- \frac{1}{2} \bigl\| 2\bigl( \lambda_{r, 2} - \lambda_{r, 1}\bigr) (z^{k} - z_{*})\bigr\|^{2} + 2\bigl( \lambda_{r, 2} - \lambda_{r, 1}\bigr) \bigl(\alpha - rk^{r - 1}\bigr) \| z^{k} - z_{*}\|^{2}\\
        &- 2\bigl(\lambda_{r, 2}^{2} - \lambda_{r, 1}^{2}\bigr) \| z^{k} - z_{*}\|^{2} + \bigl\langle 2\bigl( \lambda_{r, 2} - \lambda_{r, 1}\bigr) (z^{k} - z_{*}), \:\: \theta k^{2r} \beta_{k - 1} V(z^{k})\bigr\rangle \\
        = &\: \Bigl\langle 2\bigl( \lambda_{r, 2} - \lambda_{r, 1}\bigr) (z^{k} - z_{*}), \:\: 2k^{r} \bigl( z^{k} - z^{k - 1}\bigr) + 2\theta k^{2r} \beta_{k - 1} V(z^{k})\Bigr\rangle \\
        &+ 2\bigl(\lambda_{r, 2} - \lambda_{r, 1}\bigr) \bigl( \alpha - rk^{r - 1}\bigr) \| z^{k} - z_{*}\|^{2} \\
        &+ \underbrace{\Bigl[ 4\lambda_{r, 2} \bigl( \lambda_{r, 2} - \lambda_{r, 1}\bigr) - 2\bigl( \lambda_{r, 2} - \lambda_{r, 1}\bigr)^{2} - 2\bigl( \lambda_{r, 2}^{2} - \lambda_{r, 1}^{2}\bigr)\Bigr]}_{= 0} \| z^{k} - z_{*}\|^{2} \\
        = &\: 4\bigl( \lambda_{r, 2} - \lambda_{r, 1}\bigr) \left[ \frac{\alpha - rk^{r - 1}}{2} \:\| z^{k} - z_{*}\|^{2} + k^{r} \Bigl\langle z^{k} - z_{*}, \:\: z^{k} - z^{k - 1} + \theta k^{r} \beta_{k - 1} V(z^{k})\Bigr\rangle\right].
    \end{align*}
    Call $p_{k}$ the term between square brackets. We showed that $\lim_{k\to +\infty} \mathcal{E}_{\lambda_{r, i}}^{k}$ exists for $i = 1, 2$, and $\lambda_{r, 2} - \lambda_{r, 1} \neq 0$, thus
    \begin{equation}\label{eq:p(k) has a limit}
        \lim_{k\to +\infty} p_{k} \quad \text{exists}.
    \end{equation}
    With this at hand, we can rewrite $\mathcal{E}_{\lambda_{r, 1}}^{k}$ as 
    \begin{align}
      \mathcal{E}_{\lambda_{r, 1}}^{k} 
        = &\: \frac{1}{2} \Bigl\| 2\lambda_{r, 1} (z^{k} - z_{*}) + 2k^{r} \bigl( z^{k} - z^{k - 1}\bigr) + \theta k^{2r} \beta_{k - 1} V(z^{k})\Bigr\|^{2} + 2\lambda_{r, 1}\bigl( \alpha - rk^{r - 1} - \lambda_{r, 1}\bigr) \| z^{k} - z_{*}\|^{2} \nonumber\\
        &+ 2\lambda_{r, 1} \theta^{2r} \beta_{k - 1} \langle z^{k} - z_{*}, V(z^{k})\rangle + \frac{\theta^{2}}{2} (k + 1)^{2r} k^{2r} \beta_{k} \beta_{k - 1} \| V(z^{k})\|^{2} \nonumber\\
        = &\: \frac{1}{2} \Biggl[ 4\lambda_{r, 1}^{2} \| z^{k} - z_{*}\|^{2} + 2\lambda_{r, 1} k^{r} \Bigl\langle z^{k} - z_{*}, \:\: 2\bigl( z^{k} - z^{k - 1}\big) + \theta k^{r} \beta_{k - 1} V(z^{k})\Bigr\rangle \nonumber\\
        &\quad\quad + k^{2r} \Bigl\| 2\bigl(z^{k} - z^{k - 1}\bigr) + \theta k^{r} \beta_{k - 1} V(z^{k})\Bigr\|^{2}\Biggr] \nonumber\\
        &+ \Bigl[ 2\lambda_{r, 1} \bigl(\alpha - rk^{r - 1}\bigr) - 2\lambda_{r, 1}^{2}\Bigr] \| z^{k} - z_{*}\|^{2} + 2\lambda_{r, 1} \theta k^{2r} \langle z^{k} - z_{*}, V(z^{k})\rangle \nonumber\\
        &+ \frac{\theta^{2}}{2} (k + 1)^{2r} k^{2r} \beta_{k}\beta_{k - 1} \| V(z^{k})\|^{2} \nonumber\\
        = &\: 4\lambda_{r, 1} p_{k} + \frac{k^{2r}}{2} \Bigl\| 2\bigl(z^{k} - z^{k - 1}\bigr) + \theta k^{r} \beta_{k - 1} V(z^{k})\Bigr\|^{2} + \frac{\theta^{2}}{2} k^{4r} \beta_{k - 1}^{2} \| V(z^{k})\|^{2} \nonumber\\
        &+ \frac{\theta^{2}}{2}\left[ (k + 1)^{2r} k^{2r} \beta_{k}\beta_{k - 1} - k^{4r} \beta_{k - 1}^{2}\right] \| V(z^{k})\|^{2} \nonumber\\
        = &\:  4\lambda_{r, 1} p_{k} + k^{2r} \Bigl\| z^{k} - z^{k - 1} + \theta k^{r} \beta_{k - 1} V(z^{k})\Bigr\|^{2} + k^{2r} \bigl\| z^{k} - z^{k - 1}\bigr\|^{2}  \nonumber\\
        &+ \frac{\theta^{2}}{2}\left[ (k + 1)^{2r} k^{2r} \beta_{k}\beta_{k - 1} - k^{4r} \beta_{k - 1}^{2}\right] \| V(z^{k})\|^{2}. \label{eq:rewriting E}
    \end{align}
    Let us analyze \eqref{eq:rewriting E}. We have 
    \begin{align*}
        &\:(k + 1)^{2r} k^{2r} \beta_{k} \beta_{k - 1} - k^{4r} \beta_{k - 1}^{2} \\
        = &\: k^{2r} \beta_{k - 1} \Bigl[ (k + 1)^{2r} \beta_{k} - k^{2r} \beta_{k - 1}\Bigr] \\
        = &\: k^{2r} \beta_{k - 1} \Bigl\{ \Bigl[ (k + 1)^{2r} - k^{2r}\Bigr]\beta_{k} + k^{2r} \bigl( \beta_{k} - \beta_{k - 1}\bigr)\Bigr\} \\
        \leq &\: k^{2r} \beta_{k - 1} \Bigl[ \bigl(2r k^{2r - 1} + rk^{2r - 2}\bigr) \beta_{k} + k^{r} \bigl(1 - 2r\theta k^{r - 1} - \delta\theta\bigr) \beta_{k}\Bigr] \\
        \leq &\: \bigl(2rk^{4r - 1} + rk^{4r - 2}\bigr)M_{\beta} \beta_{k - 1}^{2} + (1 - \delta\theta) k^{3r} M_{\beta} \beta_{k - 1}^{2}, 
    \end{align*}
    where we used \eqref{eq:A1}, \eqref{eq:G2} and \eqref{eq:G3}. According to Theorem \ref{th:big O rates} and the previous inequality, we obtain that line \eqref{eq:rewriting E} converges to zero as $k\to +\infty$. Since we know that $\lim_{k\to +\infty} p_{k}$ exists, we obtain the existence of $\lim_{k\to +\infty} h_{k}$, where 
    \[
        h_{k} := k^{2r} \Bigl\| z^{k} - z^{k - 1} + \theta k^{r} \beta_{k - 1} V(z^{k})\Bigr\|^{2} + k^{2r} \bigl\| z^{k} - z^{k - 1}\bigr\|^{2}.
    \]
    Furthermore, we have 
    \[
        \sum_{k = 1}^{+\infty} \frac{h_{k}}{k^{r}} \leq 3\sum_{k = 1}^{+\infty} k^{r} \bigl\| z^{k} - z^{k - 1} \bigr\|^{2} + 2\theta \sum_{k = 1}^{+\infty} k^{3r} \beta_{k - 1}^{2}\| V(z^{k})\|^{2} < +\infty,
    \]
    according to \eqref{eq:integral 2} and \eqref{eq:dE line 3}. Since $\sum_{k = 1}^{+\infty} \frac{1}{k^{r}} = +\infty$, it must be the case that 
    \[
        \lim_{k\to +\infty} h_{k} = 0.
    \]
    In particular, this implies 
    \[
        \lim_{k\to +\infty} k^{2r} \bigl\| z^{k} - z^{k - 1}\bigr\|^{2} = 0, \text{ equivalently } \bigl\| z^{k} - z^{k - 1}\bigr\| = o\left(\frac{1}{k^{r}}\right)\text{ as }k\to +\infty.
    \]
    Using again that $\lim_{k\to +\infty} h_{k} = 0$ together with the previous statement, \eqref{eq:G3} and the triangle inequality, gives
    \[
        \lim_{k\to +\infty} k^{4r} \beta_{k - 1}^{2} \| V(z^{k})\|^{2} = 0, \text{ equivalently } \| V(z^{k})\| = o\left(\frac{1}{k^{2r} \beta_{k}}\right) \text{ as }k\to +\infty. 
    \]
    This last statement, together with the boundedness of the trajectories and the Cauchy-Schwarz inequality, gives
    \[
        \langle z^{k} - z_{*}, V(z^{k})\rangle = o\left( \frac{1}{k^{2r} \beta_{k}}\right) \text{ as }k\to +\infty.  
    \]
    This produces the stated $o$ rates. 

    We now deal with the weak convergence of the iterates. Just like in the continuous case, we use Opial's lemma (see Lemma \ref{lem:Opial}). Define, for $k\geq K$, 
    \[
        q_{k} := \frac{1}{2} \| z^{k} - z_{*}\|^{2} + \theta \sum_{j = 1}^{k} j^{r} \beta_{j - 1} \langle z^{j} - z_{*}, V(z^{j})\rangle. 
    \]
    It follows that for $k\geq K$ we have 
    \begin{align*}
        q_{k} - q_{k - 1} &= \frac{1}{2}\Bigl[ \| z^{k} - z_{*}\|^{2} - \| z^{k - 1} - z_{*}\|^{2}\Bigr] + \theta k^{r} \beta_{k - 1}\langle z^{k} - z_{*}, V(z^{k})\rangle \\
        &= \bigl\langle z^{k} - z_{*}, z^{k} - z^{k - 1} + \theta k^{r} \beta_{k - 1} V(z^{k})\bigr\rangle - \frac{1}{2} \bigl\| z^{k} - z^{k - 1}\bigr\|^{2}.
    \end{align*}
    We distinguish between the cases $r\in (0, 1)$ and $r = 1$. 
    
    \fbox{Case $r \in (0, 1)$:} we have
    \begin{align*}
        \alpha q_{k} + k^{r} \bigl( q_{k} - q_{k - 1}\bigr) = &\: \frac{\alpha}{2} \| z^{k} - z_{*}\|^{2} + \bigl\langle z^{k} - z_{*}, z^{k} - z^{k - 1} + \theta k^{r} \beta_{k - 1} V(z^{k})\bigr\rangle \\
        &+ \theta\alpha \sum_{j = 1}^{k} j^{r} \beta_{j - 1} \langle z^{j} - z_{*}, V(z^{j})\rangle - \frac{1}{2} \bigl\| z^{k} - z^{k - 1}\bigr\|^{2} \\
        = &\: \frac{r}{2} k^{r - 1} \| z^{k} - z_{*}\|^{2} + p_{k} + \theta\alpha \sum_{j = 1}^{k} j^{r} \beta_{j - 1} \langle z^{j} - z_{*}, V(z^{j})\rangle - \frac{1}{2} \bigl\| z^{k} - z^{k - 1}\bigr\|^{2}.
    \end{align*}
    Since $\lim_{k\to +\infty} k^{r - 1} = 0$, and according to \eqref{eq:integral 1}, \eqref{eq:p(k) has a limit} and Theorem \ref{th:big O rates}, the last line has a limit as $k\to +\infty$. We use Lemma \ref{lem:q discrete} to ensure the existence of $\lim_{k\to +\infty} q_{k}$. Going back to the definition of $q_{k}$ and again using \eqref{eq:integral 1}, we finally obtain that 
    \[
        \lim_{k\to +\infty} \| z^{k} - z_{*}\| \quad \text{exists}. 
    \]
    
    \fbox{Case $r = 1$:} we similarly obtain 
    \[
        (\alpha - 1)q_{k} + k^{r}\bigl(q_{k} - q_{k - 1}\bigr) = p_{k} + \theta (\alpha - 1) \sum_{j = 1}^{k} j \beta_{j - 1} \langle z^{j} - z_{*}, V(z^{j})\rangle - \frac{1}{2}\| z^{k} - z^{k - 1}\|^{2}, 
    \]
    and with the same reasoning as before come to the existence of $\lim_{k\to +\infty} \| z^{k} - z_{*}\|$. This verifies the first condition of Opial's Lemma. 

    For the second condition, assume that $\overline{z}$ is a sequential cluster point of $\bigl( z^{k}\bigr)_{k\geq 2}$. Therefore, we have $z^{k_{n}} \rightharpoonup \overline{z}$ as $n\to +\infty$ for some subsequence $\bigl( z^{k_{n}}\bigr)_{n\geq 0}$. According to Theorem \ref{th:big O rates}, we have $V(z^{k_{n}}) \to 0$ as $n\to +\infty$. Since the graph of $V$ is sequentially closed in $\mathcal{H}^{\text{weak}} \times \mathcal{H}^{\text{strong}}$, we conclude that 
    \[
        V(\overline{z}) = 0, 
    \]
    therefore checking the second condition of Opial's Lemma and thus ending the proof for this theorem. 
\end{proof}

\appendix

\section{Auxiliary results}

\begin{lemma}\label{lem:auxiliary inequalities}
    For $k\geq 1$, $r\in [0, 1]$ and $\sigma \leq 0$, the following hold: 
    \begin{align}
        (k + 1)^{r} - k^{r} &\leq rk^{r - 1}; \tag{A1}\label{eq:A3} \\
        (k + 1)^{3r} - k^{3r} &\leq 3rk^{3r - 1} + 3rk^{3r - 2} + rk^{3r - 3}; \tag{A2}\label{eq:A4} \\
        (k + 1)^{2r} - k^{2r} &\leq 2rk^{2r - 1} + rk^{2r - 2}; \tag{A3}\label{eq:A1} \\
        (k + 1)^{\sigma} - k^{\sigma} &\leq \sigma k^{\sigma - 1} + \sigma(\sigma - 1) k^{\sigma - 2}; \tag{A4}\label{eq:A5} \\
        \Bigl|(k + 1)^{\sigma} - k^{\sigma} \Bigr| &\leq |\sigma| k^{\sigma - 1}; \tag{A5}\label{eq:A6} \\
        \Bigl|2rk^{2r - 1} - \Bigl[ (k + 1)^{2r} - k^{2r}\Bigr]\Bigr| &\leq 2r |2r - 1| k^{2r - 2}. \tag{A6} \label{eq:A2}
    \end{align}
\end{lemma}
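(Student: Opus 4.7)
The plan is to reduce all six inequalities to two elementary tools: the mean value theorem and the second-order Taylor expansion of $f(x) = x^{\alpha}$ on $[k, k+1]$, combined with the single monotonicity observation that $\xi^{\alpha} \le k^{\alpha}$ whenever $\xi \ge k \ge 1$ and $\alpha \le 0$, which lets us replace any intermediate evaluation point by $k$ whenever the exponent is nonpositive.

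I would first dispose of \eqref{eq:A3} and \eqref{eq:A6} via the mean value theorem. For \eqref{eq:A3}, write $(k+1)^{r} - k^{r} = r\,\xi^{r-1}$ with $\xi \in (k, k+1)$; since $r - 1 \le 0$ we have $\xi^{r-1} \le k^{r-1}$, giving the claim. The same argument applied to $x^{\sigma}$ yields $|(k+1)^{\sigma} - k^{\sigma}| = |\sigma|\,\xi^{\sigma-1} \le |\sigma|\,k^{\sigma-1}$, which is \eqref{eq:A6}.

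Next, \eqref{eq:A5} and \eqref{eq:A2} will follow from second-order Taylor expansion at $k$. For $f(x) = x^{\sigma}$ with $\sigma \le 0$,
\[
(k+1)^{\sigma} - k^{\sigma} = \sigma k^{\sigma - 1} + \tfrac{1}{2}\sigma(\sigma - 1)\,\xi^{\sigma - 2}
\]
for some $\xi \in (k, k+1)$. Since $\sigma \le 0$ forces $\sigma(\sigma - 1) \ge 0$, and $\sigma - 2 < 0$ gives $\xi^{\sigma - 2} \le k^{\sigma - 2}$, we obtain $(k+1)^{\sigma} - k^{\sigma} \le \sigma k^{\sigma - 1} + \sigma(\sigma - 1)\,k^{\sigma - 2}$ after enlarging the factor $\tfrac{1}{2}$ to $1$ in the nonnegative remainder; this is \eqref{eq:A5}. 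Analogously, expanding $g(x) = x^{2r}$ yields $(k+1)^{2r} - k^{2r} - 2rk^{2r - 1} = r(2r - 1)\,\xi^{2r - 2}$, and since $2r - 2 \le 0$, taking absolute values and bounding $\xi^{2r - 2} \le k^{2r - 2}$ produces \eqref{eq:A2} with the slack factor $2$ easily absorbed.

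Finally, \eqref{eq:A1} and \eqref{eq:A4} follow by algebraic factoring combined with \eqref{eq:A3}. For \eqref{eq:A1}, factor $(k+1)^{2r} - k^{2r} = \bigl[(k+1)^{r} - k^{r}\bigr]\bigl[(k+1)^{r} + k^{r}\bigr]$, bound the first factor by $rk^{r-1}$ via \eqref{eq:A3}, and the second by $2k^{r} + rk^{r-1}$ (again by \eqref{eq:A3}); multiplying and using $r^{2} \le r$ to absorb the tail yields the stated bound. For \eqref{eq:A4}, I would use the difference-of-cubes factorization $a^{3} - b^{3} = (a - b)(a^{2} + ab + b^{2})$ with $a = (k+1)^{r}$ and $b = k^{r}$; the sum $(k+1)^{2r} + (k+1)^{r}k^{r} + k^{2r}$ is bounded by $3k^{2r} + 3rk^{2r-1} + rk^{2r-2}$ using \eqref{eq:A3} and \eqref{eq:A1}, after which multiplication by $(k+1)^{r} - k^{r} \le rk^{r-1}$ and $r^{2} \le r$ produces the three-term bound. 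No step presents a genuine obstacle; the only care required is tracking the signs of $r - 1$, $\sigma$, $\sigma - 1$, and $2r - 1$ to correctly orient each inequality before replacing $\xi$ by $k$.
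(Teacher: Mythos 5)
Your proof is correct, and for four of the six inequalities it takes a genuinely different route from the paper's. You coincide with the paper on \eqref{eq:A3} and \eqref{eq:A6}, which both of you get from the first-order mean value theorem plus the monotonicity of $x \mapsto x^{\alpha}$ for $\alpha \leq 0$. The divergences are these. For \eqref{eq:A1} and \eqref{eq:A4}, the paper applies the mean value theorem to $x \mapsto x^{r}$ over the intervals $\bigl(k^{2}, (k+1)^{2}\bigr)$ and $\bigl(k^{3}, (k+1)^{3}\bigr)$, so that the binomial expansions $(k+1)^{2} - k^{2} = 2k + 1$ and $(k+1)^{3} - k^{3} = 3k^{2} + 3k + 1$ directly produce the two- and three-term right-hand sides; you instead factor $a^{2} - b^{2}$ and $a^{3} - b^{3}$ with $a = (k+1)^{r}$, $b = k^{r}$, bound each factor via \eqref{eq:A3} (and \eqref{eq:A1} inside \eqref{eq:A4}), and absorb the cross terms with $r^{2} \leq r$ — equally rigorous, and your multiplication of inequalities is legitimate since both factors and both bounds are nonnegative, at the mild cost of making \eqref{eq:A3} and \eqref{eq:A1} prerequisites rather than independent facts. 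For \eqref{eq:A5} and \eqref{eq:A2}, the paper iterates the first-order mean value theorem twice, while you use a single second-order Taylor expansion with Lagrange remainder; your sign bookkeeping is sound ($\sigma(\sigma-1) \geq 0$, $\xi^{\sigma - 2} \leq k^{\sigma - 2}$, $\xi^{2r-2} \leq k^{2r-2}$), and in fact your route yields the sharper constants $\tfrac{1}{2}\sigma(\sigma - 1)k^{\sigma - 2}$ and $r\,|2r - 1|\,k^{2r - 2}$, from which the stated bounds follow by enlarging the constant, exactly as you note. There is no gap in either route; the paper's substitution trick is slicker for the power differences, while your Taylor argument is arguably the more natural way to see where the second-order terms in \eqref{eq:A5} and \eqref{eq:A2} come from and shows the stated constants are not tight.
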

\begin{proof}
    We first show \eqref{eq:A1}. According to the mean value theorem, for some $\xi \in \Bigl( k^{2}, (k + 1)^{2}\Bigr)$ we have 
    \[
        (k + 1)^{2r} - k^{2r} = \left((k + 1)^{2}\right)^{r} - \left( k^{2}\right)^{r} = r\xi^{r - 1} \Bigl[ (k + 1)^{2} - k^{2}\Bigr] = r\xi^{r - 1} \bigl(2k + 1\bigr). 
    \]
    Notice that 
    \[
        k^{2} < \xi < (k + 1)^{2} \quad \Rightarrow \quad \frac{1}{(k + 1)^{2}} < \frac{1}{\xi} < \frac{1}{k^{2}} \quad \Rightarrow \quad \frac{1}{(k + 1)^{2(1 - r)}} \leq \frac{1}{\xi^{1 - r}} \leq \frac{1}{k^{2(1 - r)}}
    \]
    and so 
    \[
        r\xi^{r - 1} \bigl(2k + 1\bigr) \leq \frac{r \bigl(2k + 1\bigr)}{k^{2(1 - r)}} = 2rk^{2r - 1} + rk^{2r - 2}. 
    \] 
    The inequalities \eqref{eq:A3} and \eqref{eq:A4} are shown in the same way. For \eqref{eq:A2}, there exists $\mu\in (k , k + 1)$ such that $(k + 1)^{2r} - k^{2r} = 2r \mu^{2r - 1}$. Again applying the mean value theorem, there exists $\xi \in [k, \mu]$ such that
    \[
        \left\lvert 2rk^{2r - 1} - \left[ (k + 1)^{2r} - k^{2r} \right] \right\rvert = \bigl|2rk^{2r - 1} - 2r \mu^{2r - 1}\bigr| = 2r |2r - 1| \xi^{2r - 2} (\mu - k) \leq 2r |2r - 1|k^{2r - 2}, 
    \]
    since $\mu - k \leq (k + 1) - k = 1$ and $2r - 2 \leq 0$. For showing \eqref{eq:A5}, again write, for some $\xi \in (k, k + 1)$, 
    \begin{equation}\label{eq:auxiliary inequality with sigma}
        (k + 1)^{\sigma} - k^{\sigma} = \sigma \xi^{\sigma - 1}. 
    \end{equation}
    Arguing similarly as before, we have $ (k + 1)^{\sigma - 1} \leq \xi^{\sigma - 1} \leq k^{\sigma - 1}$, thus 
    \[
        \sigma k^{\sigma - 1} \leq \sigma \xi^{\sigma - 1} \leq \sigma (k + 1)^{\sigma - 1} = \sigma \Bigl[ (k + 1)^{\sigma - 1} - k^{\sigma - 1}\Bigr] + \sigma k^{\sigma - 1} \leq \sigma k^{\sigma - 1} + \sigma (\sigma - 1)k^{\sigma - 2}, 
    \]
    where the $\sigma(\sigma - 1)k^{\sigma - 2}$ is obtained following the same reasoning. Inequality \eqref{eq:A6} is obtained by taking the absolute value of both sides of \eqref{eq:auxiliary inequality with sigma}. 
\end{proof}

The following elementary result is used several times in the paper.
\begin{lemma}
	\label{lem:quad}
	Let $A, B, C \in \R$ be such that $A \neq 0$ and $B^{2} - AC \leq 0$.
	The following statements are true:
	\begin{enumerate}[\rm (i)]
		\item
		\label{quad:vec-pos}
		if $A > 0$, then it holds
		\begin{equation*}
			A \left\lVert X \right\rVert ^{2} + 2B \left\langle X , Y \right\rangle + C \left\lVert Y \right\rVert ^{2} \geq 0 \quad \forall X, Y \in \sH ;
		\end{equation*}
		
		\item
		\label{quad:vec}
		if $A < 0$, then it holds
		\begin{equation*}
			A \left\lVert X \right\rVert ^{2} + 2B \left\langle X , Y \right\rangle + C \left\lVert Y \right\rVert ^{2} \leq 0 \quad \forall X, Y \in \sH .
		\end{equation*}
	\end{enumerate}
\end{lemma}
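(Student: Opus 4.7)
The plan is to treat both statements in a unified way by completing the square in the first argument, mirroring exactly what one does for a scalar quadratic $a t^{2} + 2 b t + c$ whose discriminant is controlled. The inner product is bilinear and symmetric, so the algebraic manipulation carries over verbatim to the Hilbert-space setting; no analytic input beyond the Cauchy–Schwarz-style nonnegativity of a squared norm is needed. The case $Y = 0$ can be disposed of immediately since the expression then reduces to $A \| X\|^{2}$, which has the sign of $A$; thereafter we may assume $Y \neq 0$ and freely divide by $A$, which is permitted since $A \neq 0$ by hypothesis.

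The core of the proof is the identity
\begin{equation*}
    A \| X\|^{2} + 2 B \langle X, Y\rangle + C \| Y\|^{2} = A \left\| X + \frac{B}{A} Y \right\|^{2} + \frac{AC - B^{2}}{A} \| Y\|^{2},
\end{equation*}
which I would verify simply by expanding the first norm on the right and cancelling the $\frac{B^{2}}{A}\|Y\|^{2}$ term. By the hypothesis $B^{2} - AC \leq 0$, the numerator $AC - B^{2}$ is nonnegative, so the sign of the coefficient $\frac{AC - B^{2}}{A}$ is exactly the sign of $A$.

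From here the two cases follow by inspection: if $A > 0$, both summands on the right-hand side are nonnegative, proving (i); if $A < 0$, the first summand is nonpositive (being $A$ times a norm squared) and the second is also nonpositive, proving (ii). There is essentially no obstacle in this argument — the only thing to be careful about is not to divide by $A$ before ruling out $A = 0$, but this is excluded by assumption. I expect the proof to be a few lines long, with the completed-square identity as the single nontrivial step.
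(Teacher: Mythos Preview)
Your proof is correct. The paper does not actually supply a proof of this lemma; it simply introduces it as ``the following elementary result'' and states it without argument, so there is nothing to compare against and your completing-the-square computation is exactly the expected justification.
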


Regarding the following lemma, the case where $r = 1$ has already appeared in the literature, but we haven't found a proof when $r \in [0, 1)$. For the sake of completeness, we provide a proof for $r\in [0, 1]$.  

\begin{lemma}\label{lem:q}
    Let $a > 0, r\in [0, 1]$ and $q \colon [t_{0}, +\infty) \to \R$ be a continuously differentiable function such that 
    \[
        \lim_{t\to +\infty} \left( q(t) + \frac{t^{r}}{a} \Dot{q}(t)\right) = \ell\in \R. 
    \]
    Then it holds $\lim_{t\to +\infty} q(t) = \ell$. 
\end{lemma}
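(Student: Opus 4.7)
The plan is to rewrite the hypothesis as a first-order linear ODE for $q$ and then solve it explicitly via an integrating factor; the conclusion will follow from an application of L'Hôpital's rule.

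Set $h(t) := q(t) + \frac{t^{r}}{a}\dot q(t)$, so by assumption $h$ is continuous on $[t_{0}, +\infty)$ and $\lim_{t \to +\infty} h(t) = \ell$. Rearranging yields the first-order linear ODE
\[
    \dot q(t) + \frac{a}{t^{r}} q(t) = \frac{a}{t^{r}} h(t), \qquad t \geq t_{0}.
\]
Define the integrating factor $\Phi(t) := \int_{t_{0}}^{t} \frac{a}{s^{r}} ds$, namely $\Phi(t) = \frac{a(t^{1-r} - t_{0}^{1-r})}{1-r}$ when $r \in [0,1)$ and $\Phi(t) = a \ln(t/t_{0})$ when $r = 1$. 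In either case $\Phi(t) \to +\infty$ as $t \to +\infty$. Multiplying the ODE by $e^{\Phi(t)}$, one obtains $\frac{d}{dt}\bigl[q(t) e^{\Phi(t)}\bigr] = \Phi'(t) e^{\Phi(t)} h(t)$, and integrating from $t_{0}$ to $t$ yields
\[
    q(t) = q(t_{0}) e^{-\Phi(t)} + e^{-\Phi(t)} \int_{t_{0}}^{t} \Phi'(s) e^{\Phi(s)} h(s)\, ds.
\]

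Now I would pass to the limit term by term. The first summand tends to $0$ because $\Phi(t) \to +\infty$. For the second summand, the function $t \mapsto e^{\Phi(t)}$ is smooth, strictly positive, and diverges to $+\infty$, while its derivative $\Phi'(t) e^{\Phi(t)} = \frac{a}{t^{r}} e^{\Phi(t)}$ is likewise positive. Hence L'Hôpital's rule applies to the indeterminate form $\tfrac{\infty}{\infty}$ (or $\tfrac{\text{bounded}}{\infty}$ if $\ell = 0$, in which case a direct $\varepsilon$-argument gives the same conclusion), yielding
\[
    \lim_{t \to +\infty} \frac{\int_{t_{0}}^{t} \Phi'(s) e^{\Phi(s)} h(s)\, ds}{e^{\Phi(t)}} = \lim_{t \to +\infty} \frac{\Phi'(t) e^{\Phi(t)} h(t)}{\Phi'(t) e^{\Phi(t)}} = \lim_{t \to +\infty} h(t) = \ell.
\]
Combining these gives $\lim_{t \to +\infty} q(t) = \ell$, as desired.

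The only delicate point is making sure the L'Hôpital step is justified uniformly in $r \in [0,1]$; the key observation is that $\Phi$ is an antiderivative that tends to $+\infty$ in both regimes (polynomial growth when $r<1$, logarithmic when $r=1$), so $e^{\Phi(t)} \to +\infty$ and its derivative is positive, which is all L'Hôpital's rule requires. If one prefers to avoid dichotomizing on the sign of $\ell$, the cleanest route is to decompose $h(s) = \ell + (h(s) - \ell)$ inside the integral and verify directly that $e^{-\Phi(t)} \int_{t_0}^t \Phi'(s) e^{\Phi(s)} \ell\, ds \to \ell$ (by an elementary computation) while the remainder vanishes since $h(s) - \ell \to 0$ and $\int_{t_0}^{t} \Phi'(s) e^{\Phi(s)} ds = e^{\Phi(t)} - 1$.
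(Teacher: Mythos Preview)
Your proof is correct and follows essentially the same route as the paper's: both multiply by the integrating factor $e^{\Phi(t)}$ (the paper's $F(t)$ is this function up to a constant) and then extract the limit. The only cosmetic difference is that the paper carries out the final step by a direct $\varepsilon$-estimate after reducing to $\ell=0$, whereas you package that estimate as an appeal to L'H\^opital's rule; your closing remark about decomposing $h=\ell+(h-\ell)$ is exactly the paper's argument.
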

\begin{proof}
    By considering $q(\cdot) - \ell$ if necessary, we may assume that $\ell = 0$, i.e., we need to show that $\lim_{t\to +\infty} q(t) = 0$. Define, for $t\geq t_{0}$, 
    \[
        F(t):= 
        \begin{dcases}
            \frac{1}{a} \exp\left(\frac{a}{1 - r} t^{1 - r}\right) & r \in [0, 1)\\
            t^{a} & r = 1.
        \end{dcases}
    \]
    Therefore, for every $t\geq t_{0}$ we have
    \[
        \Dot{F}(t) = 
        \begin{dcases}
            \frac{1}{t^{r}} \exp\left(\frac{a}{1 - r} t^{1 - r}\right) & r\in [0, 1) \\
            a t^{a - 1} & r = 1. 
        \end{dcases}
    \]
    In any case, $\Dot{F}(t) > 0$ for all $t\geq t_{0}$, and it holds 
    \[
        F(t) = \frac{t^{r}}{a} \Dot{F}(t), 
    \]
    which means that 
    \begin{equation}\label{eq: derivative of F(t)q(t)}
        \Dot{F}(t) \left(q(t) + \frac{t^{r}}{a} \Dot{q}(t)\right) = \Dot{F}(t) q(t) + \frac{t^{r}}{a} \Dot{F}(t) \Dot{q}(t) = \Dot{F}(t) q(t) + F(t)\Dot{q}(t) = \frac{d}{dt}\bigl(F(t)q(t)\bigr).
    \end{equation}
    By assumption we have $q(t) + \frac{t^{r}}{a}\Dot{q}(t) \to 0$ as $t\to +\infty$. Take $\varepsilon > 0$. Then, there exists $T \geq t_{0}$ such that 
    \[
        \left| q(t) + \frac{t^{r}}{a}\Dot{q}(t)\right| \leq \varepsilon \quad \forall t\geq T.
    \]
    Using \eqref{eq: derivative of F(t)q(t)}, for $t\geq T$ it holds
    \[
        \left| \frac{d}{dt}\bigl( F(t)q(t)\bigr)\right| = \left| \Dot{F}(t)\left( q(t) + \frac{t^{r}}{a} \Dot{q}(t)\right)\right| \leq \varepsilon \Dot{F}(t), 
    \]
    so integration from $T$ to $t\geq T$ yields
    \begin{align*}
        \bigl| F(t)q(t) - F(T)q(T)\bigr| &= \left| \int_{T}^{t} \frac{d}{ds} \bigl(F(s)q(s)\bigr) ds\right| \leq \int_{T}^{t} \left| \frac{d}{ds} \bigl(F(s)q(s)\bigr)\right| ds \\
        &\leq \varepsilon \int_{T}^{t} \Dot{F}(s)ds = \varepsilon \bigl(F(t) - F(T)\bigr). 
    \end{align*}
    It follows that 
    \[
        \bigl| F(t)q(t)\bigr| \leq \bigl| F(t)q(t) - F(T)q(T)\bigr| + \bigl|F(T)q(T)\bigr| \leq \varepsilon \bigl( F(t) - F(T)\bigr) + \bigl| F(T)q(T)\bigr|, 
    \]
    from which we deduce
    \[
        |q(t)| \leq \varepsilon + \frac{F(T) \bigl( |q(T)| - \varepsilon\bigr)}{F(t)} \quad \forall t\geq T. 
    \]
    Now, we use the fact that $F(t) \to +\infty$ as $t\to +\infty$ to finally obtain 
    \[
        \limsup_{t\to +\infty} |q(t)| \leq \varepsilon. 
    \]
    Since $\varepsilon > 0$ was arbitrary, the desired result is shown. 
\end{proof}

The following result is the discrete counterpart of the previous lemma. Again, the case $r = 1$ has already been previously addressed, but we found no proof when $r\in [0, 1)$. We provide a complete proof. 

\begin{lemma}\label{lem:q discrete}
    Let $a > 0$, $r\in [0, 1]$ and let $(q_{k})_{k\geq 1}$ be a sequence of real numbers such that 
    \[
        \lim_{k\to +\infty} \left[ q_{k + 1} + \frac{k^{r}}{a}\bigl(q_{k + 1} - q_{k}\bigr)\right] = \ell \in \R. 
    \]
    Then it holds $\lim_{k\to +\infty} q_{k} = \ell$. 
\end{lemma}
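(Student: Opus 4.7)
My plan is to mirror the continuous-time argument of Lemma \ref{lem:q} by replacing the integrating factor $F(t)$ with an increasing sequence $(F_k)_{k\geq 1}$. The first step will be to observe the summation-by-parts identity
\[
F_{k+1}q_{k+1} - F_k q_k = F_k(q_{k+1}-q_k) + (F_{k+1}-F_k)q_{k+1},
\]
which can be rewritten as
\[
F_{k+1}q_{k+1} - F_k q_k = (F_{k+1}-F_k)\left[ q_{k+1} + \frac{k^r}{a}(q_{k+1}-q_k)\right]
\]
provided the recursion $\frac{k^r}{a}(F_{k+1}-F_k) = F_k$ is imposed. Accordingly I would set $F_1 := 1$ and $F_{k+1} := F_k\bigl(1 + a k^{-r}\bigr)$ for $k\geq 1$, obtaining a positive, strictly increasing sequence that plays the role of the continuous integrating factor (note that this is the discrete analogue of $F(t) = \frac{t^r}{a}\dot F(t)$).

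Next I would verify that $F_k \to +\infty$ as $k \to +\infty$. Taking logarithms, $\ln F_k = \sum_{j=1}^{k-1}\ln(1 + a/j^r)$, and the elementary bound $\ln(1+x) \geq x/(1+x)$ for $x\geq 0$ gives $\ln(1 + a/j^r) \geq a/(j^r + a) \geq a/(2j^r)$ for $j$ sufficiently large. The series $\sum_j j^{-r}$ diverges for every $r \in [0,1]$ (harmonic in the critical case $r=1$, of order $k^{1-r}$ for $r\in(0,1)$, and linear for $r=0$), so $\ln F_k \to +\infty$ and hence $F_k \to +\infty$.

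The conclusion then follows the same template as in the continuous case. By replacing $q_k$ with $q_k - \ell$ I reduce to $\ell = 0$. Given $\varepsilon > 0$, I pick $K$ such that $\bigl| q_{k+1} + \frac{k^r}{a}(q_{k+1}-q_k)\bigr| \leq \varepsilon$ for every $k\geq K$; the identity above combined with the triangle inequality and a telescoping sum from $K$ to $k-1$ then yields
\[
|F_k q_k - F_K q_K| \leq \varepsilon(F_k - F_K).
\]
Dividing by $F_k > 0$ gives $|q_k| \leq \varepsilon + F_K(|q_K| - \varepsilon)/F_k$, and passing $k\to +\infty$ produces $\limsup_{k\to+\infty} |q_k| \leq \varepsilon$; since $\varepsilon$ was arbitrary, $q_k \to 0$. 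I expect no serious obstacle here: the only mildly delicate point is guaranteeing divergence of $\ln F_k$ uniformly across $r\in[0,1]$, which boils down to the harmonic-series bound and is routine.
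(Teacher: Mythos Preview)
Your proposal is correct and matches the paper's proof essentially step for step: the same integrating sequence $F_{k+1}=(1+a/k^{r})F_{k}$, the same identity $F_{k+1}q_{k+1}-F_{k}q_{k}=(F_{k+1}-F_{k})\bigl[q_{k+1}+\tfrac{k^{r}}{a}(q_{k+1}-q_{k})\bigr]$, and the same telescoping estimate. The only cosmetic difference is in proving $F_{k}\to+\infty$: the paper uses $\prod_{j}(1+a_{j})\geq 1+\sum_{j}a_{j}$ whereas you take logarithms and invoke $\ln(1+x)\geq x/(1+x)$, both reducing to the divergence of $\sum_{j}j^{-r}$ for $r\in[0,1]$.
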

\begin{proof}
    By taking $q_{k} - \ell$ instead of $q_{k}$, we may assume w.l.o.g. that $\ell = 0$. First of all, we recall a small auxiliary result. For any sequence $(a_{n})_{n\geq 1}$ of nonnegative numbers, it holds 
    \begin{equation}\label{eq:aux prod sum inequality}
        \prod_{k = 1}^{n} (1 + a_{k})\geq 1 + \sum_{k = 1}^{n}a_{k}. 
    \end{equation}
    Now, define $F_{1} = 1$, and inductively, 
    \[
        F_{k + 1} = \left(1 + \frac{a}{k^{r}}\right) F_{k}. 
    \]
    We readily see that for every $n\geq 1$, the previous definition together with \eqref{eq:aux prod sum inequality} entail
    \[
        F_{n + 1} = \prod_{k = 1}^{n}\left(1 + \frac{a}{k^{r}}\right) \geq 1 + a\sum_{k = 1}^{n}\frac{1}{k^{r}} \geq 1 + a\sum_{k = 1}^{n}\frac{1}{k},
    \]
    and since the right-hand side grows to $+\infty$ as $n\to +\infty$, we have $\lim_{n\to +\infty}F_{n} = +\infty$. Furthermore, notice that $(F_{k})_{k\geq 1}$ is increasing, since $F_{k + 1} = F_{k} + \frac{a}{k^{r}} F_{k} \geq F_{k}$. Additionally, we have 
    \[
        F_{k} = \frac{k^{r}}{a} \bigl(F_{k + 1} - F_{k}\bigr), 
    \]
    which gives
    \begin{align}
        \bigl(F_{k + 1} - F_{k}\bigr) \left[ q_{k + 1} + \frac{k^{r}}{a}\bigl(q_{k + 1} - q_{k}\bigr)\right] \nonumber
        &= \bigl(F_{k + 1} - F_{k}\bigr)q_{k + 1} + \frac{k^{r}}{a}\bigl( F_{k + 1} - F_{k}\bigr) \bigl(q_{k + 1} - q_{k}\bigr) \nonumber\\
        &= \bigl( F_{k + 1} - F_{k}\bigr)q_{k + 1} + F_{k} \bigl(q_{k + 1} - q_{k}\bigr) \nonumber\\
        &= F_{k + 1}q_{k + 1} - F_{k}q_{k}. \label{eq:derivative of F(k)q(k)}
    \end{align}
    Let $\varepsilon > 0$. Then, there exists $k_{0}\geq 1$ such that for $k\geq k_{0}$ it holds
    \[
        \left| q_{k + 1} + \frac{k^{r}}{a}\bigl(q_{k + 1} - q_{k}\bigr)\right| \leq \varepsilon. 
    \]
    Multiplying both sides by $F_{k + 1} - F_{k} \geq 0$ and using \eqref{eq:derivative of F(k)q(k)} yields
    \[
        \bigl| F_{k + 1}q_{k + 1} - F_{k}q_{k}\bigr| = \bigl(F_{k + 1} - F_{k}\bigr)\left| q_{k + 1} + \frac{k^{r}}{a}\bigl(q_{k + 1} - q_{k}\bigr)\right| \leq \varepsilon \bigl( F_{k + 1} - F_{k}\bigr). 
    \]
    Summing the previous inequality from $k_{0}$ to $n - 1\geq k_{0}$ leads to 
    \begin{align*}
        \bigl| F_{n}q_{n} - F_{k_{0}}q_{k_{0}}\bigr| &= \left| \sum_{k = k_{0}}^{n - 1} \bigl( F_{k + 1}q_{k + 1} - F_{k}q_{k}\bigr)\right| \leq \sum_{k = k_{0}}^{n - 1} \bigl| F_{k + 1}q_{k + 1} - F_{k}q_{k}\bigr| \\
        &\leq \varepsilon \sum_{k = k_{0}}^{n - 1} \bigl( F_{k + 1} - F_{k}\bigr) = \varepsilon\bigl( F_{n} - F_{k_{0}}\bigr).
    \end{align*}
    It follows that for $n\geq k_{0} + 1$ we have 
    \[
        |F_{n}q_{n}| \leq \bigl| F_{n}q_{n} - F_{k_{0}}q_{k_{0}}\bigr| + |F_{k_{0}}q_{k_{0}}| \leq \varepsilon \bigl( F_{n} - F_{k_{0}}\bigr) + |F_{k_{0}}q_{k_{0}}|
    \]
    and therefore
    \[
        |q_{n}| \leq \varepsilon + \frac{F_{k_{0}} \bigl( |q_{k_{0}}| - \varepsilon\bigr)}{F_{n}}.
    \]
    Since we have already established that $F_{n} \to  +\infty$ as $n\to +\infty$, we come to 
    \[
        \limsup_{n\to +\infty} |q_{n}| \leq \varepsilon. 
    \]
    Since $\varepsilon > 0$ was arbitrary, we conclude that $\lim_{n\to +\infty} q_{n} = 0$. 
\end{proof}

The proof for the following lemma can be found in \cite{Opial}. 
\begin{lemma}[Opial's Lemma] \label{lem:Opial}
    Let $\mathcal{H}$ be a real Hilbert space, $S \subseteq \mathcal{H}$ a nonempty set, $t_{0} > 0$ and $z : \left[ t_{0}, +\infty \right) \to \mathcal{H}$ a mapping that satisfies
    \begin{enumerate}[\rm (i)]
        \item for every $z_{*} \in S$, $\lim_{t\to +\infty} \|z(t) - z_{*}\|$ exists;
        \item every weak sequential cluster point of the trajectory $z(t)$ as $t \to +\infty$ belongs to $S$. 
    \end{enumerate}
    Then, $z(t)$ converges weakly to an element of $S$ as $t \to +\infty$.
\end{lemma}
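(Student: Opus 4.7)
The plan is to combine the two hypotheses to produce a unique weak sequential cluster point of $z(t)$, which, together with boundedness, forces weak convergence. First, I would observe that (i) already implies boundedness of the trajectory: fixing any $z_{*} \in S$ (which is nonempty by assumption), the function $t \mapsto \| z(t) - z_{*}\|$ has a finite limit, hence is bounded on $[t_{0}, +\infty)$, and therefore so is $\| z(t)\|$. Since $\mathcal{H}$ is a Hilbert space, bounded sequences admit weakly convergent subsequences, so the set of weak sequential cluster points of $z(t)$ as $t \to +\infty$ is nonempty; by (ii), every such cluster point lies in $S$.

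The heart of the proof is uniqueness of the weak cluster point. Suppose $\overline{z}_{1}$ and $\overline{z}_{2}$ are two such points; by (ii), both belong to $S$. For every $t \geq t_{0}$, the polarization identity gives
\[
    \| z(t) - \overline{z}_{1}\|^{2} - \| z(t) - \overline{z}_{2}\|^{2} = \| \overline{z}_{1}\|^{2} - \| \overline{z}_{2}\|^{2} + 2\langle z(t), \overline{z}_{2} - \overline{z}_{1}\rangle.
\]
By (i) both norms on the left-hand side have a limit as $t \to +\infty$, hence so does $\langle z(t), \overline{z}_{2} - \overline{z}_{1}\rangle$; call this limit $\ell \in \R$. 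Taking sequences $(t_{n}^{(i)})_{n \in \N}$ with $t_{n}^{(i)} \to +\infty$ and $z(t_{n}^{(i)}) \rightharpoonup \overline{z}_{i}$ for $i = 1, 2$, and passing to the weak limit in the inner product, we obtain both $\langle \overline{z}_{1}, \overline{z}_{2} - \overline{z}_{1}\rangle = \ell$ and $\langle \overline{z}_{2}, \overline{z}_{2} - \overline{z}_{1}\rangle = \ell$. Subtracting yields $\| \overline{z}_{2} - \overline{z}_{1}\|^{2} = 0$, so $\overline{z}_{1} = \overline{z}_{2}$.

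To conclude, let $\overline{z}$ denote this unique weak cluster point. I would argue by contradiction: if $z(t)$ did not converge weakly to $\overline{z}$, there would exist a weak neighborhood $U$ of $\overline{z}$ (of the form $\{ z \in \mathcal{H} : |\langle z - \overline{z}, h_{j}\rangle| < \varepsilon, j = 1, \dots, m\}$ for some $h_{1}, \dots, h_{m} \in \mathcal{H}$ and $\varepsilon > 0$) and a sequence $t_{n} \to +\infty$ with $z(t_{n}) \notin U$. Since $(z(t_{n}))_{n \in \N}$ is bounded, it admits a weakly convergent subsequence whose weak limit is, again, a cluster point of $z(t)$, hence must coincide with $\overline{z}$ by the uniqueness just established; this contradicts $z(t_{n}) \notin U$ for all $n$. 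The only delicate point is formalizing the ``subsequence extraction'' step, which is standard but requires noting that weak sequential compactness of bounded sets in $\mathcal{H}$ suffices here (no metrizability of the weak topology is needed, since we work purely with sequences). This is the sole mildly technical step; everything else is a direct consequence of (i), (ii), and the polarization identity.
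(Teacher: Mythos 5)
Your proof is correct and complete: boundedness of the trajectory follows from (i), weak sequential compactness of bounded sets in $\mathcal{H}$ gives existence of cluster points, the identity $\| z(t) - \overline{z}_{1}\|^{2} - \| z(t) - \overline{z}_{2}\|^{2} = \|\overline{z}_{1}\|^{2} - \|\overline{z}_{2}\|^{2} + 2\langle z(t), \overline{z}_{2} - \overline{z}_{1}\rangle$ together with (i) and (ii) forces uniqueness, and the subsequence-extraction argument correctly upgrades a unique cluster point of a bounded trajectory to weak convergence. The paper does not prove this lemma itself but defers to the cited reference, and your argument is exactly the classical one found there, so there is nothing to flag.
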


\section{Proof of Lemma \ref{lem:coefs of <z(k+1)-z(k),V(z(k+1))> and |V(z(k+1))|}}
For the sake of readability, we include the proof here. 
\begin{proof}[Proof of Lemma \ref{lem:coefs of <z(k+1)-z(k),V(z(k+1))> and |V(z(k+1))|}]
    (i) The coefficient reads
    \begin{align*}
        &\: 2\lambda^{2} \Bigl[ (k + 1)^{2(\rho - r)} - k^{2(\rho - r)}\Bigr] + \overbrace{2\lambda (2\rho - r) k^{2(\rho - r)}\Bigl[ k^{r - 1} - (k + 1)^{r - 1}\Bigr]}^{=: O_{k}} \\
        &\quad\quad + 2\lambda \Bigl[ (k + 1)^{2(\rho - r)} - k^{2(\rho - r)}\Bigr]\bigl( \alpha - (2\rho - r)(k + 1)^{r - 1} - \lambda\bigr). 
    \end{align*}
    After dropping the nonpositive term of the second line and using \eqref{eq:A5} and \eqref{eq:A6}, the previous quantity is less or equal to 
    \begin{align*}
        &\: 2\lambda^{2} \Bigl[ (k + 1)^{2(\rho - r)} - k^{2(\rho - r)}\Bigr] + |O_{k}| \\
        \leq &\: 4\lambda^{2}(\rho - r) k^{2(\rho - r) - 1} + \Bigl\{ 4\lambda^{2}(\rho - r)(2(\rho - r) - 1)k^{2(\rho - r) - 2} + 2\lambda |2\rho - r| (1 - r) k^{2(\rho - r) + r - 2} \Bigr\}. 
    \end{align*}

    (ii) Recall the term in question reads 
    \begin{align*}
        2P_{k} = &\:8\lambda(r - \rho) k^{2(\rho - r) - 1}k^{r} + 4\lambda \Bigl[ (k + 1)^{2(\rho - r)} - k^{2(\rho - r)}\Bigr](k + 1)^{r} \\
        = &\: 4\lambda k^{r} \Bigl\{ 2(r - \rho)k^{2(\rho - r) - 1} \! - \Bigl[k^{2(\rho - r)} \! - (k \! + \! 1)^{2(\rho - r)}\Bigr]\Bigr\} \! + \! 4\lambda \Bigl[ (k \! + \! 1)^{2(\rho - r)}\! - k^{2(\rho - r)}\Bigr] \! \Bigl[ (k \! + \! 1)^{r} \! - k^{r}\Bigr], 
    \end{align*}
    which means that 
    \[
        |P_{k}| \leq  2\lambda k^{r} \cdot 2(r - \rho) \bigl(1 + 2(r - \rho)\bigr) k^{2(\rho - r) - 2} + 2\lambda \cdot 2(r - \rho) k^{2(\rho - r) - 1} \cdot r k^{r - 1}, 
    \]
    and the right-hand side is of order $\mathcal{O}(k^{2\rho - r - 2})$ as $k\to +\infty$.

    (iii) We can rewrite the term as  
    \begin{align}
        \eta_{k} &= 2k^{r} \Bigl[\bigl(2r\theta k^{r - 1} - 1\bigr)\beta_{k} + \theta k^{r} \bigl( \beta_{k} - \beta_{k - 1}\bigr)\Bigr] - \theta \Bigl[(k + 1)^{2r}\beta_{k} - k^{2r}\beta_{k - 1}\Bigr] \nonumber\\
        &= 2k^{r} \bigl(2r\theta k^{r - 1} - 1\bigr)\beta_{k} + 2\theta k^{2r} \bigl(\beta_{k} - \beta_{k - 1}\bigr) - \theta \Bigl\{ \Bigl[ (k + 1)^{2r} - k^{2r}\Bigr]\beta_{k} + k^{2r} \bigl( \beta_{k} - \beta_{k - 1}\bigr)\Bigr\} \nonumber\\
        &= 2k^{r} \bigl(2r\theta k^{r - 1} - 1\bigr)\beta_{k} + \theta k^{2r} \bigl( \beta_{k} - \beta_{k - 1}\bigr) - \theta \Bigl[ (k + 1)^{2r} - k^{2r}\Bigr] \beta_{k}. \label{eq:another formula for eta(k)}
    \end{align}
    According to our assumptions, for large enough $k$ we have $|2r\theta k^{r - 1} - 1| = 1 - 2r\theta k^{r - 1} \leq 1$. Using this, together with \eqref{eq:A1} and \eqref{eq:G2}, yields
    \begin{equation*}
        |\eta_{k}| \leq 2k^{r} \beta_{k} + k^{r} \bigl(1 - 2r\theta k^{r - 1} - \delta\theta\bigr) \beta_{k} + \bigl(2r\theta k^{2r - 1} + r\theta k^{2r - 2}\bigr)\beta_{k}. 
    \end{equation*}
    Since $2r - 2 \leq 2r - 1 \leq r$, the previous inequality tells us that 
    \begin{equation}\label{eq:order of eta(k)}
        |\eta_{k}| \leq q_{k} \beta_{k}, 
    \end{equation}
    where $q_{k} = \mathcal{O}(k^{r})$ as $k\to +\infty$. We rewrite the term accompanying $\bigl\langle z^{k + 1} - z^{k}, V(z^{k + 1})\bigr\rangle$ as 
    \begin{align}
        & \: k^{2(\rho - r)} \Bigl\{ 2\eta_{k} (k + 1)^{r} + 2\theta \bigl( \lambda + rk^{r - 1} - \alpha\bigr)(k + 1)^{2r} \beta_{k} - 2\bigl( \lambda + rk^{r - 1} - \alpha\bigr)\eta_{k} + 2\lambda\theta k^{2r} \beta_{k - 1}\Bigr\} \label{eq:coef of <z(k+1)-z(k),V(z(k+1))> rewritten line 1}\\
        &+ 2\theta \Bigl[ (k + 1)^{2(\rho - r)} - k^{2(\rho - r)}\Bigr] (k + 1)^{3r} \beta_{k}. \label{eq:coef of <z(k+1)-z(k),V(z(k+1))> rewritten line 2}
    \end{align}
    We first focus on \eqref{eq:coef of <z(k+1)-z(k),V(z(k+1))> rewritten line 1}, namely, the term inside curly brackets: 
    \begin{align}
        &\:2\eta_{k} (k + 1)^{r} + 2\theta \bigl( \lambda + rk^{r - 1} - \alpha\bigr)(k + 1)^{2r} \beta_{k} - 2\bigl( \lambda + rk^{r - 1} - \alpha\bigr)\eta_{k} + 2\lambda\theta k^{2r} \beta_{k - 1} \nonumber\\
        = &\: 2\Bigl[ (k + 1)^{r} - k^{r} - \bigl( \lambda + rk^{r - 1} - \alpha\bigr)\Bigr]\eta_{k} + 2k^{r} \eta_{k} \nonumber\\
        &+ 2\theta \Bigl[ (k + 1)^{2r} - k^{2r}\Bigr] \bigl( \lambda + rk^{r - 1} - \alpha\bigr) \beta_{k} + 2\theta k^{2r} \bigl( \lambda + rk^{r - 1} - \alpha\bigr) \beta_{k} \nonumber\\
        &+ 2\lambda \theta k^{2r} \bigl(\beta_{k - 1} - \beta_{k}\bigr) + 2\lambda\theta k^{2r} \beta_{k} \nonumber\\
        = &\: 2 k^{r} \eta_{k} + 2\theta k^{2r} \bigl( \lambda + rk^{r - 1} - \alpha\bigr) \beta_{k} + 2\lambda\theta  k^{2r} \beta_{k} \nonumber\\
        &+ 2\Bigl[ (k + 1)^{r} - k^{r} - \bigl( \lambda + rk^{r - 1} - \alpha\bigr)\Bigr]\eta_{k} + 2\theta \Bigl[ (k + 1)^{2r} - k^{2r}\Bigr] \bigl( \lambda + rk^{r - 1} - \alpha\bigr) \beta_{k} \nonumber\\
        &+ 2\lambda \theta k^{2r} \bigl(\beta_{k - 1} - \beta_{k}\bigr) \nonumber\\
        = &\: 2k^{r} \Bigl\{2k^{r} \bigl(r\theta k^{r - 1} - 1\bigr)\beta_{k} + 2r\theta k^{2r - 1}\beta_{k} + \theta k^{2r} \bigl(\beta_{k} - \beta_{k - 1}\bigr) - \theta \Bigl[ (k + 1)^{2r} - k^{2r}\Bigr]\beta_{k}\Bigr\} \nonumber\\
        &+ 2\theta k^{2r} \bigl(\lambda + rk^{r - 1} - \alpha\bigr)\beta_{k} + 2\lambda\theta k^{2r} \beta_{k} \nonumber\\
        &+ 2\Bigl[ (k + 1)^{r} - k^{r} - \bigl( \lambda + rk^{r - 1} - \alpha\bigr)\Bigr]\eta_{k} + 2\theta \Bigl[ (k + 1)^{2r} - k^{2r}\Bigr] \bigl( \lambda + rk^{r - 1} - \alpha\bigr) \beta_{k} \nonumber\\
        &+ 2\lambda \theta k^{2r} \bigl(\beta_{k - 1} - \beta_{k}\bigr) \nonumber\\
        = &\: 2k^{r} \Bigl[2k^{r} \bigl(r\theta k^{r - 1} - 1\bigr)\beta_{k} + \theta k^{2r} \bigl( \beta_{k} - \beta_{k - 1}\bigr)\Bigr] + 2\theta k^{2r} \bigl( \lambda + rk^{r - 1} - \alpha\bigr)\beta_{k} + 2\lambda\theta k^{2r} \beta_{k} \nonumber\\
        &+ 2\theta k^{r} \Bigl\{ 2rk^{2r - 1} - \Bigl[(k + 1)^{2r} - k^{2r}\Bigr]\Bigr\} \beta_{k}\nonumber\\
        &+ 2\Bigl[ (k + 1)^{r} - k^{r} - \bigl( \lambda + rk^{r - 1} - \alpha\bigr)\Bigr]\eta_{k} + 2\theta \Bigl[ (k + 1)^{2r} - k^{2r}\Bigr] \bigl( \lambda + rk^{r - 1} - \alpha\bigr) \beta_{k} \nonumber\\
        &+ 2\lambda \theta k^{2r} \bigl(\beta_{k - 1} - \beta_{k}\bigr) \nonumber\\
        = &\: 2k^{2r} \Bigl[2\bigl(r\theta k^{r - 1} - 1\bigr)\beta_{k} + \theta k^{r} \bigl(\beta_{k} - \beta_{k - 1}\bigr) + \theta \bigl(\lambda + rk^{r - 1} - \alpha\bigr) \beta_{k} + \lambda\theta \beta_{k}\Bigr] \label{eq:coef of <z(k+1)-z(k),V(z(k+1))> line 1}\\
        &+ 2\theta k^{r} \Bigl\{ 2rk^{2r - 1} - \Bigl[(k + 1)^{2r} - k^{2r}\Bigr]\Bigr\} \beta_{k} + 2\Bigl[ (k + 1)^{r} - k^{r} - \bigl( \lambda + rk^{r - 1} - \alpha\bigr)\Bigr]\frac{\eta_{k}}{\beta_{k}}\cdot \beta_{k} \label{eq:coef of <z(k+1)-z(k),V(z(k+1))> line 2}\\
        &+ 2\theta \Bigl[ (k + 1)^{2r} - k^{2r}\Bigr] \bigl( \lambda + rk^{r - 1} - \alpha\bigr) \beta_{k} + 2\lambda \theta k^{2r} \frac{\beta_{k - 1} - \beta_{k}}{\beta_{k}} \cdot \beta_{k} \label{eq:coef of <z(k+1)-z(k),V(z(k+1))> line 3}
    \end{align}
    Line \eqref{eq:coef of <z(k+1)-z(k),V(z(k+1))> line 1} reads 
    \begin{align*}
        &\: 2k^{2r} \Bigl[\bigl(2r\theta k^{r - 1} - 2 + \lambda\theta + \theta r k^{r - 1} - \theta\alpha + \lambda\theta \bigr)\beta_{k} + \theta k^{r} \bigl(\beta_{k} - \beta_{k - 1}\bigr)\Bigr] \\
        = &\: 2k^{2r} \Bigl\{ \Bigl[-2\theta \bigl(\alpha - rk^{r - 1} - \lambda\bigr) + \bigl(\theta\alpha + \theta rk^{r - 1} - 2\bigr)\Bigr] \beta_{k} + \theta k^{r}\bigl( \beta_{k} - \beta_{k - 1}\bigr)\Bigr\}.
    \end{align*}
    For lines \eqref{eq:coef of <z(k+1)-z(k),V(z(k+1))> line 2} and \eqref{eq:coef of <z(k+1)-z(k),V(z(k+1))> line 3}, we factor out $2\beta_{k}$ and write these lines as $2Q_{1, k}\beta_{k}$. We use \eqref{eq:A2}, \eqref{eq:A3}, the fact that $\bigl| \lambda + rk^{r - 1} - \alpha\bigr| = \alpha - rk^{r - 1} - \lambda \leq \alpha$ for large enough $k$, \eqref{eq:order of eta(k)}, \eqref{eq:A1} and \eqref{eq:G2} to obtain 
    \begin{align*}
        |Q_{1, k}| \leq &\: \theta k^{r} \Bigl| 2rk^{2r - 1} - \Bigl[ (k + 1)^{2r} - k^{2r}\Bigr]\Bigr| + \Bigl| (k + 1)^{r} - k^{r} - \bigl(\lambda + rk^{r - 1} - \alpha\bigr)\Bigr|\left| \frac{\eta_{k}}{\beta_{k}}\right| \\ 
        &+ \theta \Bigl| (k + 1)^{2r} - k^{2r}\Bigr| \bigl| \lambda + rk^{r - 1} - \alpha\bigr| + \lambda\theta k^{2r} \left| \frac{\beta_{k - 1} - \beta_{k}}{\beta_{k}}\right| \\
        \leq &\: 2r|2r - 1| \theta k^{3r - 2} + \bigl|rk^{r - 1} + \alpha - rk^{r - 1} - \lambda\bigr| q_{k} \\
        &+ \bigl(2r\theta k^{2r - 1} + r\theta k^{2r - 2}\bigr) \bigl(\alpha - rk^{r - 1} - \lambda\bigr) + \lambda k^{r} \bigl(1 - 2r\theta k^{r - 1} - \delta\theta\bigr) \\
        \leq &\: 2r|2r - 1| \theta k^{3r - 2} + (\alpha - \lambda) q_{k} + \alpha \bigl(2r\theta k^{2r - 1} + r\theta k^{2r - 2}\bigr) + \lambda k^{r} \bigl(1 - \delta\theta\bigr). 
    \end{align*}
    The right-hand side is of order $\mathcal{O}(k^{r})$ as $k\to +\infty$. Now, line \eqref{eq:coef of <z(k+1)-z(k),V(z(k+1))> rewritten line 2}: we write it as $2 k^{2(\rho - r)} Q_{2, k}$ and we have, according to \eqref{eq:A6}, 
    \begin{align*}
        |Q_{2, k}| &\leq \frac{\theta}{k^{2(\rho - r)}}\Bigl| (k + 1)^{2(\rho - r)} - k^{2(\rho - r)}\Bigr| (k + 1)^{3r} \leq \frac{1}{k^{2(\rho - r)}} \cdot 2\theta (r - \rho) k^{2(\rho - r) - 1} (k + 1)^{3r} \\
        &\leq 2\theta(r - \rho) k^{-1} \bigl[ k^{3r} + 3k^{2r} + 3k^{r} + 1\bigr]
    \end{align*}
    and this last term is of order $\mathcal{O}\left(k^{3r - 1}\right)$ as $k\to +\infty$. 

    (iv) First, notice that 
    \begin{align*}
        (k + 1)^{2\rho} = &\: (k + 1)^{2\rho} - k^{2(\rho - r)}(k + 1)^{2r} + k^{2(\rho - r)}(k + 1)^{2r} \\
        = &\: (k + 1)^{2r} \Bigl[ (k + 1)^{2(\rho - r)} - k^{2(\rho - r)}\Bigr] + k^{2(\rho - r)}(k + 1)^{2r} \\
        \leq &\: k^{2(\rho - r)}(k + 1)^{2r}.
    \end{align*}
    It follows that the coefficient attached to $\| V(z^{k + 1})\|^{2}$ is less or equal than 
    \begin{align}
        &\:k^{2(\rho - r)} \left[ \theta(k + 1)^{2r} \eta_{k} \beta_{k} - \frac{1}{2}\eta_{k}^{2}\right] + \frac{\theta^{2}}{2} \Bigl[ (k + 2)^{2r} k^{2(\rho - r)}(k + 1)^{2r}\beta_{k + 1}\beta_{k} - (k + 1)^{2r} k^{2\rho} \beta_{k} \beta_{k - 1}\Bigr] \nonumber\\
        &+ \frac{\theta^{2}}{2} \Bigl[ (k + 1)^{2(\rho - r)} - k^{2(\rho - r)}\Bigr](k + 1)^{4r} \beta_{k}^{2} \nonumber\\
        \leq &\: k^{2(\rho - r)} \left\{ \theta (k + 1)^{2r} \eta_{k} \beta_{k} - \frac{1}{2} \eta_{k}^{2} + \frac{\theta^{2}}{2} \Bigl[ (k + 2)^{2r} (k + 1)^{2r} \beta_{k + 1} \beta_{k} - (k + 1)^{2r} k^{2r} \beta_{k} \beta_{k - 1}\Bigr]\right\} \nonumber\\ 
        \leq &\: k^{2(\rho - r)} \left\{ \theta k^{2r} \beta_{k} \eta_{k} + \frac{\theta^{2}}{2} (k + 1)^{2r} \beta_{k} \Bigl[ (k + 2)^{2r} \beta_{k + 1} - k^{2r} \beta_{k - 1}\Bigr]\right\}, \label{eq:first inequality for coef of |V(z(k+1))|}
    \end{align}
    where we used the fact that for large enough $k$, $\eta_{k} \leq 0$ (a consequence of \eqref{eq:G1}) and we dropped the nonpositive term $-\frac{1}{2}\eta_{k}^{2}$. We focus on the term inside curly brackets and address the cases $r\in (0, 1)$ and $r = 1$ separately. 
    
    \fbox{Case $r\in (0, 1)$:} since the inequality \eqref{eq:supremum < than 1/theta - delta} is strict, there exists $\tilde{\delta}$ such that $0 < \delta < \tilde{\delta} < \frac{1}{\theta}$ and such that inequalities \eqref{eq:G1}, \eqref{eq:G2} and \eqref{eq:G3} hold replacing $\delta$ by $\tilde{\delta}$. Taking into account \eqref{eq:another formula for eta(k)}, the first of summand of \eqref{eq:first inequality for coef of |V(z(k+1))|} reads
    \begin{align}
        \theta k^{2r} \beta_{k} \eta_{k} &= \theta k^{2r} \beta_{k}\Bigl\{2k^{r} \bigl(2r\theta k^{r - 1} - 1\bigr)\beta_{k} + \theta k^{2r} \bigl(\beta_{k} - \beta_{k - 1}\bigr) - \theta \Bigl[ (k + 1)^{2r} - k^{2r}\Bigr]\beta_{k}\Bigr\} \nonumber\\
        &\leq \theta k^{2r} \beta_{k}\Bigl[ 2 k^{r} \bigl( 2r\theta k^{r - 1} - 1\bigr) \beta_{k} + k^{r} \bigl(1 - 2r\theta k^{r - 1} - \tilde{\delta}\theta\bigr)\beta_{k}\Bigr] \nonumber\\
        &= \theta k^{2r} \beta_{k}\Bigl[ k^{r} \bigl(2r\theta k^{r - 1} - 1\bigr)\beta_{k} - \tilde{\delta}\theta k^{r} \beta_{k}\Bigr] \nonumber\\
        &=  \theta \bigl(- 1 - \tilde{\delta}\theta\bigr) k^{3r} \beta_{k}^{2} + 2r\theta^{2} k^{4r - 1} \beta_{k}^{2}. \label{eq:first summand |V(z(k+1))| case r<1} 
    \end{align}
    Moving on to the second summand of \eqref{eq:first inequality for coef of |V(z(k+1))|}, we have 
    \begin{align}
        &\: (k + 2)^{2r} \beta_{k + 1} - k^{2r} \beta_{k - 1} \nonumber\\
        = &\: (k + 2)^{2r} \beta_{k + 1} - (k + 1)^{2r} \beta_{k} + (k + 1)^{2r} \beta_{k} - k^{2r} \beta_{k - 1} \nonumber\\
        = &\: \Bigl[ (k + 2)^{2r} - (k + 1)^{2r}\Bigr] \beta_{k + 1} + (k + 1)^{2r} \bigl(\beta_{k + 1} - \beta_{k}\bigr) + \Bigl[ (k + 1)^{2r} - k^{2r}\Bigr] \beta_{k} + k^{2r} \bigl( \beta_{k} - \beta_{k - 1}\bigr) \nonumber\\
        \leq &\: \bigl(2r(k + 1)^{2r - 1} + r(k + 1)^{2r - 2}\bigr) \beta_{k + 1} + (k + 1)^{r} \left(\frac{1}{\theta} - 2r (k + 1)^{r - 1} - \tilde{\delta}\right) \beta_{k + 1} \nonumber\\
        &+ \bigl( 2r k^{2r - 1} + r k^{2r - 2}\bigr) \beta_{k} + k^{r} \left(\frac{1}{\theta} - 2rk^{r - 1} - \tilde{\delta}\right) \beta_{k} \nonumber\\
        \leq & \: \Bigl[ M_{\beta} \bigl( 2r (k + 1)^{2r - 1} + r(k + 1)^{2r - 2}\bigr) + 2r k^{2r - 1} + rk^{2r - 2}\Bigr] \beta_{k} \nonumber\\
        &+ (k + 1)^{r} \left(\frac{1}{\theta} - \tilde{\delta}\right) M_{\beta}\beta_{k} + k^{r} \left( \frac{1}{\theta} - \tilde{\delta}\right) \beta_{k} \nonumber\\
        \leq &\: \left\{ M_{\beta} \bigl( 2r (k + 1)^{2r - 1} + r(k + 1)^{2r - 2}\bigr) + 2r k^{2r - 1} + rk^{2r - 2} + M_{\beta} \left( \frac{1}{\theta} - \tilde{\delta}\right)\right\} \beta_{k} \nonumber\\
        &+ (1 + M_{\beta}) \left( \frac{1}{\theta} - \tilde{\delta}\right) k^{r} \beta_{k} \nonumber\\
        = &\: r_{k} \beta_{k} + (1 + M_{\beta})\left( \frac{1}{\theta} - \tilde{\delta}\right) k^{r} \beta_{k}, \label{eq:second summand |V(z(k+1))| case r<1}
    \end{align}
    where $r_{k}$ is the term between curly brackets. Notice that $r_{k} = \mathcal{O}\bigl(k^{\max\{ 2r - 1, 0\}}\bigr)$ as $k\to +\infty$. Here, we used \eqref{eq:A1}, \eqref{eq:G2}, \eqref{eq:G3}, we dropped the nonpositive terms $-2r(k + 1)^{r - 1}$ and $-2rk^{r - 1}$ and we used the fact that $(k + 1)^{r} \leq k^{r} + 1$, since $t\mapsto t^{r}$ is subadditive on $[0, +\infty)$. With \eqref{eq:first summand |V(z(k+1))| case r<1} and \eqref{eq:second summand |V(z(k+1))| case r<1} at hand, we can bound \eqref{eq:first inequality for coef of |V(z(k+1))|}:
    \begin{align*}
        &\:\theta k^{2r} \beta_{k} \eta_{k} + \frac{\theta^{2}}{2} (k + 1)^{2r} \beta_{k} \Bigl[ (k + 2)^{2r} \beta_{k + 1} - k^{2r} \beta_{k - 1}\Bigr] \\
        \leq &\: \theta \bigl(- 1 - \tilde{\delta}\theta\bigr) k^{3r} \beta_{k}^{2} + 2r\theta^{2} k^{4r - 1} \beta_{k}^{2} + \frac{\theta^{2}}{2} (k + 1)^{2r}\left[r_{k} + (1 + M_{\beta})\left(\frac{1}{\theta} - \tilde{\delta}\right)k^{r}\right] \beta_{k}^{2} \\
        \leq &\: \theta k^{3r} \left[-1 + \frac{1 + M_{\beta}}{2} - \left(1 + \frac{1 + M_{\beta}}{2}\right)\tilde{\delta}\theta\right] \beta_{k}^{2} \\
        &+ \theta^{2}\left\{ 2r k^{4r - 1} + \frac{(k + 1)^{2r}}{2} r_{k} + k^{r}\bigl(2k^{r} + 1\bigr) \frac{1 + M_{\beta}}{2} \left(\frac{1}{\theta} - \tilde{\delta}\right)\right\}\beta_{k}^{2} \\
        \leq &\: \theta k^{3r} \left[ \frac{M_{\beta} - 1}{2} - 2\tilde{\delta} \theta\right] \beta_{k}^{2} + R_{k} \beta_{k}^{2}, 
    \end{align*}
    where $R_{k}$ is the term between curly brackets multiplied by $\theta^{2}$. We used the fact that $M_{\beta} > 1$ for the bound $-\frac{1 + M_{\beta}}{2} < -1$, and again the subadditivity of $t\mapsto t^{r}$ to obtain $(k + 1)^{2r} \leq \bigl(k^{r} + 1\bigr)^{2} = k^{2r} + 2k^{r} + 1$. Since $r < 1$, we have $3r > 4r - 1$, and from here we deduce that $R_{k} = o(k^{3r})$ as $k\to +\infty$. According to Remark \ref{rem:growth condition}, $M_{\beta}$ can be taken as close to $1$ as desired, provided $k$ is large enough. In particular, it can be chosen such that $\frac{M_{\beta} - 1}{2} < 2\theta (\tilde{\delta} - \delta)$, and thus 
    \[
        \frac{M_{\beta} - 1}{2} - 2\tilde{\delta} \theta < 2\theta(\tilde{\delta} - \delta) - 2\tilde{\delta} \theta = -2\delta\theta,  
    \]
    and this is what we wanted to show. 

    \fbox{Case $r = 1$:} again, according to \eqref{eq:another formula for eta(k)}, we have 
    \begin{align*}
        \eta_{k} &= 2k (2\theta - 1)\beta_{k} + \theta k^{2} \bigl(\beta_{k} - \beta_{k - 1}\bigr) - \theta \Bigl[(k + 1)^{2} - k^{2}\Bigr] \beta_{k} \\
        &= 2k(2\theta - 1)\beta_{k} + \theta k^{2} \bigl(\beta_{k} - \beta_{k - 1}\bigr) - \theta(2k + 1)\beta_{k} \\
        &= \Bigl[2(\theta - 1)k - \theta\Bigr]\beta_{k} + \theta k^{2} \bigl(\beta_{k} - \beta_{k - 1}\bigr), 
    \end{align*}
    so using \eqref{eq:G2} yields 
    \begin{align}
        \theta k^{2} \beta_{k} \eta_{k} &\leq \theta k^{2} \beta_{k} \Bigl\{ \Bigl[ 2(\theta - 1)k - \theta\Bigr]\beta_{k} + k(1 - 2\theta - \delta\theta)\beta_{k}\Bigr\} \nonumber\\
        &= \theta k^{3} \Bigl[2(\theta - 1) + (1 - 2\theta - \delta\theta)\Bigr] \beta_{k}^{2} - \theta^{2}k^{2} \beta_{k}^{2}. \label{eq:first summand |V(z(k+1))| case r=1}
    \end{align}
    For the second summand of \eqref{eq:first inequality for coef of |V(z(k+1))|}, we write
    \begin{align}
        &\:(k + 2)^{2}\beta_{k + 1} - k^{2} \beta_{k - 1} \nonumber\\
        = &\: (k + 2)^{2} \bigl(\beta_{k + 1} - \beta_{k}\bigr) + k^{2} \bigl( \beta_{k} - \beta_{k - 1}\bigr) + \Bigl[(k + 2)^{2} - k^{2}\Bigr] \beta_{k} \nonumber\\
        = &\: \Bigl[ (k + 1)^{2} + 2k + 3\Bigr] \bigl(\beta_{k + 1} - \beta_{k}\bigr) + k^{2} \bigl(\beta_{k} - \beta_{k - 1}\bigr) + (4k + 4)\beta_{k} \nonumber\\
        \leq &\: \left(\frac{1}{\theta} - 2 - \delta\right) \Bigl[(k + 1)\beta_{k + 1} + k \beta_{k}\Bigr] + (2k + 3) \bigl(\beta_{k + 1} - \beta_{k}\bigr) + 4(k + 1)\beta_{k} \nonumber\\
        = &\: \left(\frac{1}{\theta} - 2 - \delta\right) (k + 1) \bigl(\beta_{k + 1} - \beta_{k}\bigr) + (2k + 3)\bigl(\beta_{k + 1} - \beta_{k}\bigr) + \left[\left(\frac{1}{\theta} - 2 - \delta\right)(2k + 1) + 4(k + 1)\right] \beta_{k} \nonumber\\
        \leq & \: \left(\frac{1}{\theta} - 2 - \delta\right)^{2} \beta_{k + 1} + \frac{2k + 3}{k + 1}\left(\frac{1}{\theta} - 2 - \delta\right)\beta_{k + 1} + \left[\left(\frac{1}{\theta} - 2 - \delta\right)(2k + 1) + 4(k + 1)\right] \beta_{k} \nonumber\\
        \leq &\: \left[\left(\frac{1}{\theta} - 2 - \delta\right)^{2} M_{\beta} + \frac{2k + 3}{k + 1} \left(\frac{1}{\theta} - 2 - \delta\right) M_{\beta}\right]\beta_{k} + \left[\left(\frac{1}{\theta} - 2 - \delta\right)(2k + 1) + 4(k + 1)\right] \beta_{k} \nonumber\\
        = &\: 2\left(\frac{1}{\theta} - \delta\right) k \beta_{k} + \left\{\left(\frac{1}{\theta} - 2 - \delta\right)^{2} M_{\beta} + \frac{2k + 3}{k + 1} \left(\frac{1}{\theta} - 2 - \delta\right) M_{\beta} + \frac{1}{\theta} + 2 - \delta\right\} \beta_{k} \nonumber\\
        = &\: 2\left(\frac{1}{\theta} - \delta\right) k \beta_{k} + r_{k}\beta_{k}, \label{eq:second summand |V(z(k+1))| case r=1}
    \end{align}
    where $r_{k}$ is the term between curly brackets. We have $r_{k} = \mathcal{O}\left( 1\right)$ as $k\to +\infty$. To obtain the bounds, we repeatedly used \eqref{eq:G2} and \eqref{eq:G3}. Using \eqref{eq:first summand |V(z(k+1))| case r=1} and \eqref{eq:second summand |V(z(k+1))| case r=1}, we have a bound for \eqref{eq:first inequality for coef of |V(z(k+1))|}:
    \begin{align*}
         &\:\theta k^{2r} \beta_{k} \eta_{k} + \frac{\theta^{2}}{2} (k + 1)^{2r} \beta_{k} \Bigl[ (k + 2)^{2r} \beta_{k + 1} - k^{2r} \beta_{k - 1}\Bigr] \\
         \leq &\: \theta k^{3} \Bigl[2(\theta - 1) + (1 - 2\theta - \delta\theta)\Bigr] \beta_{k}^{2} - \theta^{2}k^{2} \beta_{k}^{2} + \frac{\theta^{2}}{2}(k + 1)^{2} \beta_{k} \left[ 2\left(\frac{1}{\theta} - \delta\right)k\beta_{k} + r_{k}\beta_{k}\right] \\
         \leq &\: \theta k^{3} \Bigl[ 2(\theta - 1) + (1 - 2\theta - \delta\theta) + (1 - \delta\theta)\Bigr] \beta_{k}^{2} + \frac{\theta^{2}}{2}(2k + 1) r_{k} \beta_{k}^{2} \\
         = &\: -2\delta \theta^{2} k^{3} \beta_{k}^{2} + R_{k} \beta_{k}^{2}, 
    \end{align*}
    where $R_{k} = \frac{\theta^{2}}{2} (2k + 1) r_{k} = o(k^{3})$ as $k\to +\infty$. This concludes the proof. 
\end{proof}

\printbibliography

@article{ReinforcementLearning,
    author = {Omidshafiei, Shayegan and Pazis, Jason and Amato, Christopher and How, Jonathan P. and Vian, John},
    title = {Deep Decentralized Multi-task Multi-Agent Reinforcement Learning under Partial Observability},
    journal = {Proceedings of the 34th International Conference on Machine Learning},
    volume = {PLMR 70},
    pages = {2681--2690},
    year = {2017}
}

@article{RobustLearning,
    author = {Madry, Aleksander and Makelov, Aleksandar and Schmidt, Ludwig and Tsipras, Dimitris and Vladu, Adrian},
    title = {Towards Deep Learning Models Resistant to Adversarial
    Attacks},
    journal = {ICLR 2018: International Conference on Learning Representations},
    year = {2018}
}

@article{GANs,
    author = {Böhm, Axel and Sedlmayer, Michael and Csetnek, Ernö Robert and Bo\c t, Radu Ioan},
    title = {Two Steps at a Time—Taking GAN Training in Stride with Tseng’s Method},
    journal = {SIAM Journal on Mathematics of Data Science},
    volume = {4}, 
    number = {2},
    pages = {750--771},
    year = {2022}
}

@article{GANs2,
    author = {Goodfellow, I J and Pouget-Abadie, J and Mirza, M and Xu, B and Warde-Farley, D and Ozair, S and Courville, A and Bengio, Y},
    title = {Generative Adversarial Networks},
    journal = {NIPS 2014: Advances in Neural Information Processing Systems},
    volume = {27},
    pages = {2672--2680},
    year = {2014}
}

@article{HeHuFang,
    author = {He, Xin and Hu, Rong and Fang, Ya-Ping},
    title = {Inertial primal-dual dynamics with damping and scaling for linearly constrained convex optimization problems},
    journal = {Applicable Analysis},
    volume = {102},
    number = {15}, 
    pages = {4114–-4139},
    year = {2023}
}

@article{Alvarez,
    author = {Álvarez, F},
    title = {On the minimizing property of a second-order dissipative system in Hilbert spaces},
    journal = {SIAM Journal on Control Optimization},
    volume = {38},
    number = {4},
    pages = {1102–-1119},
    year = {2020}
}

@article{AlvarezAttouch,
    author = {Álvarez, F and Attouch, Hedi},
    title = {An inertial proximal method for maximal monotone operators via discretization of a nonlinear oscillator with damping},
    journal = {Set-Valued Analysis},
    volume = {9}, 
    number = {1--2},
    pages = {3--11},
    year = {2001}
}

@article{AttouchMainge,
    author = {Attouch, Hedi and Maingé, P E},
    title = {Asymptotic behavior of second order dissipative evolution equations combining potential with non-potential effects},
    journal = {ESAIM Control Optimization and Calculus of
    Variations},
    volume = {17},
    number = {3},
    pages = {836--857},
    year = {2011}
}

@article{BotCsetnek,
    author = {Bo\c t, Radu Ioan and Csetnek, Ernö Robert},
    title = {Second order forward-backward dynamical systems for monotone inclusion problems},
    journal = {SIAM Journal on Control and Optimization},
    volume = {54},
    pages = {1423–-1443},
    year = {2016}
}

@article{AttouchSvaiter,
    author = {Attouch, Hedi and Svaiter, B F},
    title = {A continuous dynamical Newton-like approach to solving monotone inclusions},
    journal = {SIAM Journal on Control and Optimization},
    volume = {49},
    number = {2}, 
    pages = {574–-598},
    year = {2011}
}

@article{SuBoydCandes,
    author = {Su, W and Boyd, S and Candès, E},
    title = {A differential equation for modeling Nesterov’s accelerated gradient method: theory and insights},
    journal = {Journal of Machine Learning Research},
    volume = {17},
    number = {153},
    pages = {1--43},
    year = {2016}
}

@article{AttouchPeypouquetRedont,
    author = {Attouch, Hedi and Peypouquet, Juan and Redont, Patrick},
    title = {Fast convex minimization via inertial dynamics with Hessian driven damping},
    journal = {Journal of Differential Equations},
    volume = {210}, 
    number = {10},
    pages = {5734–-5783}, 
    year = {2016}
}

@article{HeHuFang2,
    author = {He, Xin and Hu, Rong and Fang, Ya-Ping},
    title = {Convergence rates of inertial primal-dual dynamical methods for separable convex optimization problems.},
    journal = {SIAM Journal on Control and Optimization},
    volume = {59}, 
    number = {5},
    pages = {3103--4080},
    year = {2021}
}

@article{ZengLeiChen,
    author = {Zeng, Xianlin and Lei, Jinlong and Chen, Jie},
    title = {Dynamical primal-dual accelerated method with applications to network optimization},
    journal = {IEEE Transactions on Automatic Control},
    volume = {68},
    number = {3},
    year = {2023}
}

@article{ADMM,
    author = {Attouch, Hedi and Chbani, Z and Fadili, J and Riahi, H},
    title = {Fast convergence of dynamical ADMM via time scaling
    of damped inertial dynamics.},
    journal = {Jorunal of Optimization Theory and Applications},
    volume = {193},
    pages = {704--736},
    year = {2022}
}

@article{BotNguyen,
    author = {Bo\c t, Radu Ioan and Nguyen, Dang-Khoa},
    title = {Improved convergence rates and trajectory convergence for primal-dual dynamical systems with vanishing damping},
    journal = {Journal of Differential Equations},
    volume = {303},
    pages = {369--406},
    year = {2021}
}

@article{AttouchPeypouquet,
    author = {Attouch, Hedi and Peypouquet, Juan},
    title = {Convergence of inertial dynamics and proximal algorithms governed by maximal monotone operators},
    journal = {Mathematical Programming},
    volume = {174},
    number = {1--2},
    pages = {391–-432},
    year = {2019}
}

@article{AttouchLaszlo,
    author = {Attouch, Hedi and László, S C},
    title = {Continuous Newton-like inertial dynamics for monotone inclusions},
    journal = {Set-Valued and Variational Analysis},
    volume = {29},
    pages = {555–-581},
    year = {2021}
}

@article{fOGDA,
    author = {Bo\c t, Radu Ioan and Csetnek, Ernö Robert and Nguyen, Dang-Khoa},
    title = {Fast Optimistic Gradient Descent Ascent (OGDA) method in
    continuous and discrete time},
    journal = {Foundations of Computational Mathematics},
    year = {2023}
}

@article{Opial,
    title = {Weak convergence of the sequence of successive approximations for nonexpansive mappings},
    volume = {73},
    pages = {591--597},
    number = {4},
    journaltitle = {Bulletin of the American Mathematical Society},
    author = {Opial, Zdzisław},
    date = {1967-07},
}

 
\end{document}